\newtheorem{theorem}{Theorem}
\newtheorem{proposition}[theorem]{Proposition}
\newtheorem{pr}[theorem]{Proposition}
\newtheorem{lemma}[theorem]{Lemma}
\numberwithin{theorem}{section}
\numberwithin{corollary}{section}
\numberwithin{proposition}{section}
\numberwithin{pr}{section}
\numberwithin{lemma}{section}
\theoremstyle{definition}
\newtheorem{defn}[theorem]{Definition}
\theoremstyle{remark}
\newtheorem{rem}[theorem]{Remark}
\newtheorem*{unremarks}{Remarks}
\newenvironment{example}
  {\pushQED{\qed}\examplex}
  {\popQED\endexamplex}
\renewcommand\thmcontinues[1]{Continued}
\newcommand{\mA}{\ensuremath{\mathcal{A}}}
\newcommand{\A}{\ensuremath{\mathcal{A}}}
\newcommand{\mB}{\ensuremath{\mathcal{B}}}
\newcommand{\mC}{\ensuremath{\mathcal{C}}}
\newcommand{\CC}{\ensuremath{\mathcal{C}}}
\newcommand{\mH}{\ensuremath{\mathcal{H}}}
\newcommand{\mS}{\ensuremath{\mathcal{S}}}
\newcommand{\mT}{\ensuremath{\mathcal{T}}}
\newcommand{\mM}{\ensuremath{\mathcal{M}}}
\newcommand{\M}{\ensuremath{\mathcal{M}}}
\newcommand{\mV}{\ensuremath{\mathcal{V}}}
\newcommand{\sing}{\ensuremath{\mathcal{V}}}
\newcommand{\mJ}{\mathcal{J}}
\newcommand{\mm}{\ensuremath{\mathbf{m}}}
\newcommand{\bV}{\ensuremath{\mathbb{V}}}
\newcommand{\bU}{\ensuremath{\mathbb{U}}}
\newcommand{\mMR}{\ensuremath{\mathcal{M_{\R}}}}
\newcommand{\ba}{\ensuremath{\mathbf{a}}}
\newcommand{\bb}{\ensuremath{\mathbf{b}}}
\newcommand{\cc}{\ensuremath{\mathbf{c}}}
\newcommand{\dd}{\ensuremath{\mathbf{d}}}
\newcommand{\hh}{\ensuremath{\mathbf{h}}}
\newcommand{\bg}{\ensuremath{\mathbf{g}}}
\newcommand{\be}{\ensuremath{\mathbf{e}}}
\newcommand{\bff}{\ensuremath{\mathbf{f}}}
\newcommand{\bi}{\ensuremath{\mathbf{i}}}
\newcommand{\bl}{\ensuremath{\boldsymbol \ell}}
\newcommand{\bm}{\ensuremath{\mathbf{m}}}
\newcommand{\bpt}{\ensuremath{\mathbf{p}}}
\newcommand{\pp}{\ensuremath{\mathbf{p}}}
\newcommand{\br}{\ensuremath{\mathbf{r}}}
\newcommand{\rr}{\ensuremath{\mathbf{r}}}
\newcommand{\bv}{\ensuremath{\mathbf{v}}}
\newcommand{\uu}{\ensuremath{\mathbf{u}}}
\newcommand{\vv}{\ensuremath{\mathbf{v}}}
\newcommand{\ww}{\ensuremath{\mathbf{w}}}
\newcommand{\bq}{\ensuremath{\mathbf{q}}}
\newcommand{\bw}{\ensuremath{\mathbf{w}}}
\newcommand{\bx}{\ensuremath{\mathbf{x}}}
\newcommand{\xx}{\ensuremath{\mathbf{x}}}
\newcommand{\by}{\ensuremath{\mathbf{y}}}
\newcommand{\yy}{\ensuremath{\mathbf{y}}}
\newcommand{\bz}{\ensuremath{\mathbf{z}}}
\newcommand{\zz}{\ensuremath{\mathbf{z}}}
\newcommand{\one}{\ensuremath{\mathbf{1}}}
\newcommand{\lgrad}{\ensuremath{{\nabla_{\log}}}}
\newcommand{\grad}{\ensuremath{\nabla}}
\newcommand{\N}{\ensuremath{\mathbb{N}}}
\newcommand{\Z}{\ensuremath{\mathbb{Z}}}
\newcommand{\R}{\ensuremath{\mathbb{R}}}
\newcommand{\C}{\ensuremath{\mathbb{C}}}
\newcommand{\bbj}{\ensuremath{\mathbf{b}^{(j)}}}
\newcommand{\rhat}{\hat{\bf r}}
\newcommand{\brhat}{\hat{\br}}
\newcommand{\bss}{\mathbf{s}}
\newcommand{\bepsilon}{\ensuremath{\boldsymbol \epsilon}}
\newcommand{\bs}{{\ensuremath{\boldsymbol{\sigma}}}}
\newcommand{\brho}{\ensuremath{\boldsymbol \rho}}
\newcommand{\bt}{\ensuremath{\boldsymbol \theta}}
\newcommand{\btil}{\tilde{\bb}}
\newcommand{\btau}{\ensuremath{\boldsymbol \tau}}
\newcommand{\bzer}{\ensuremath{\mathbf{0}}}
\newcommand{\bone}{\ensuremath{\mathbf{1}}}
\newcommand{\obeta}{\ensuremath{\overline{\beta}}}
\newcommand{\tomega}{\ensuremath{\tilde{\omega}}}
\newcommand{\tG}{\tilde{G}}
\DeclareMathOperator\erf{\Psi}
\newcommand{\sgn}{\operatorname{sgn}}
\newcommand{\Comp}{\C}
\font\elevenss=cmss11
\font\eightss=cmss8
\font\sixss=cmss8 at 6pt
\def\ss{\fam\ssfam \elevenss}%
\def\ssb{\fam\ssfam \eightss}%
\def\mO{{\mathcal O}}
\def\crit{\mbox{\ss crit}}
\def\contrib{\mbox{\ss contrib}}
\def\contribS{\mbox{\ssb contrib}}
\def\Cox{\hfill \Box}
\def\R{{\mathbb R}}
\def\C{{\mathbb C}}
\def\Z{{\mathbb Z}}
\def\torus{\btau}
\def\adj{{\rm Adj}\,}
\def\ee{\varepsilon}
\def\low{h_{\min}}
\newcommand{\Em}[1]{\textit{\textbf{#1}\index{#1|textbf}}}
\author{Yuliy Baryshnikov\thanks{Department of Mathematics, University of 
Illinois, Urbana, IL 61801, USA, {\tt ymb@illinois.edu}}, 
Stephen Melczer\thanks{Department of Combinatorics \& Optimization,
University of Waterloo, 200 University Avenue West,
Waterloo, ON N2L 3G1, Canada, {\tt smelczer@uwaterloo.ca} }, 
and 
Robin Pemantle\thanks{University of Pennsylvania, Department of 
Mathematics, 209 S. 33rd Street, Philadelphia, PA 19104, 
{\tt pemantle@math.upenn.edu}}}
\title{Asymptotics of multivariate sequences IV: generating functions 
with poles on a hyperplane arrangement}
\begin{document}
\maketitle

\begin{abstract}
Let $F(z_1,\dots,z_d)$ be the quotient of an analytic function with a 
product of linear functions.  Working in the framework of analytic
combinatorics in several variables, we compute asymptotic formulae for the 
Taylor coefficients of $F$ using multivariate residues and saddle-point 
approximations.  Because the singular set of $F$ is the union of 
hyperplanes, we are able to make explicit the topological decompositions
which arise in the multivariate singularity analysis. In addition to 
effective and explicit asymptotic results, we provide the first
results on transitions between different asymptotic regimes, and 
provide the first software package to verify and compute 
asymptotics in non-smooth cases of 
analytic combinatorics in several variables.
It is also our hope that this paper 
will serve as an entry to the more advanced corners of analytic 
combinatorics in several variables for combinatorialists. 
\end{abstract}

\section{Introduction} 

In this paper we study the coefficients of meromorphic functions 
\begin{equation} \label{eq:prototype}
F(\bz) = F(z_1,\dots,z_d) 
   = \frac{G(\bz)}{\prod_{j=1}^m \ell_j(\bz)^{p_j}}
\end{equation}
whose denominator is the product of positive integer powers of real linear 
functions $\ell_j$.  Such functions arise, among other places, in queuing theory.

\begin{example} \label{eg:queuing}
The so-called partition generating function for a closed multiclass
queuing network with one infinite server has the form
\begin{equation} \label{eq:queuing}
F(\bz) = \frac{e^{z_1 + z_2 + \cdots + z_d}}
{\prod_{j=1}^m\left(1-\sum_{i=1}^d\rho_{ij}z_j\right)}
\end{equation}
for real constants $\rho_{ij}>0$ depending on 
model parameters~\cite[Eq.~(2.26)]{BertozziMcKenna1993}.
\end{example}

Bertozzi and McKenna~\cite{BertozziMcKenna1993} approached the 
asymptotic analysis of the queuing system described in~\eqref{eq:queuing}
by noting that the multivariate Cauchy integral which evaluates these
coefficients can be represented as a sum of integrals over basic homology 
cycles in the domain of holomorphy of the complex $d$-form 
$\zz^{-\rr} F(\zz) d\zz$.  In certain low-dimensional cases, they 
were able to exploit linear relations among these cycles to determine 
dominant asymptotics via multivariate residues.
Since that time, the theory of multivariate coefficient extraction
via the field of \emph{analytic combinatorics in several variables 
(ACSV)}~\cite{PW-book,melczer-book} has grown substantially.  

The techniques of ACSV aim to characterize the asymptotic behaviour 
of the series coefficients $\{ a_\rr \}$ of a convergent
series expansion $F(\zz) = \sum_{\rr}a_\rr\zz^\rr$ as
$\rr \to \infty$ with the normalized vector 
$\rhat = \rr / |\rr|$ staying in a bounded set, where $|\rr|$ 
denotes the 1-norm $|\rr|=|r_1| + \cdots + |r_d|$.
Suppose that $F(\zz) = G(\zz)/H(\zz)$, 
where $H(\zz) = \prod_{j=1}^m h_j(\zz)^{p_j}$ for
polynomials $h_j$ each vanishing on a smooth variety (generalizing
the case when $H$ is a product of linear factors).  
The series coefficients $\{ a_\rr \}$ exhibit uniform asymptotic behaviour
in $\rr$ aside from certain degenerate cases having to do with
nontransverse intersections of these varieties or boundary directions $\rhat$
where there are transitions in asymptotic behaviour.
An asymptotic expansion for $a_\rr$ can usually 
be obtained by taking an integer sum of saddle-point
integrals $I(\bs)$ localized near certain \emph{contributing points} $\bs$ where
the denominator $H$ vanishes. Pemantle and Wilson~\cite{PW2} 
characterize the asymptotic behaviour of the types of 
local integrals that come up in this paper (so-called \emph{transverse
multiple points}); see also~\cite{RaichevWilson2011} for explicit
formulae.

Computing the set of contributing points $\bs$ over which to sum
local integrals to determine asymptotics can be easy in some applications, 
but is difficult (perhaps even undecidable) in general. When the denominator 
under consideration is a product
of linear factors, Bertozzi and McKenna~\cite{BertozziMcKenna1993} 
determine this set for a few examples and speculated on 
the existence of a theory to compute the set in general.  
Outside of the linear denominator case, which we cover
extensively in this paper, when the singular set of $F$
is a manifold then it is known how to compute the
contributing points both in the bivariate 
case~\cite{DeVriesHoevenPemantle2011} and when the 
contributing points lie on the boundary of the domain of
convergence of the power series~\cite{MelczerSalvy2021}.
There is currently no known algorithm to determine contributing singularities
for general meromorphic (or even rational) functions, 
and such an algorithm would need
to decide certain deep topological questions. However, the 
current state of knowledge is enough for many problems of 
combinatorial origin.

In addition to queuing theory, other areas where generating functions 
having a form similar to~\eqref{eq:prototype} arise include
Markov modeling~\cite{karloff},  
lattice point enumeration~\cite{deloera-sturmfels}, 
and discrete probability theory 
(see Example~\ref{eg:probs} below). 
Gaussian and other limit theorems also follow from the asymptotic 
extraction of coefficients of multivariate generating functions. 

When the denominator of $F$ has only linear factors, the singular
set of $F$ forms a hyperplane arrangement, which allows us to describe
which critical points $\bs$ are contributing points that
affect asymptotic behaviour of a coefficient sequence. 
In this paper we give an algorithm to determine dominant 
asymptotics for any such meromorphic function under two assumptions:
(1) $\br$ is {\bf generic}, meaning that $\rr  / |\rr|$ does not 
approach one of a codimension~1 set of bad boundary directions, and 
(2) the numerator $G(\zz)$ is polynomial.  Algorithm~\ref{alg:1} below
summarizes the procedure under the additional assumption that 
the functions $\ell_j$ are linearly independent, and a Maple implementation 
is detailed in Section~\ref{sec:implement}.  
Later, Algorithm~\ref{alg:SimpleDecomp} handles the more general
case where the $\{ \ell_j \}$ can be linearly dependent.  When $G$ is
allowed to be a more general entire function, the same two algorithms
give valid asymptotic formulae, with the proviso that the determination
of which among finitely many terms of the formula asymptotically
dominate may require further investigation.  In non-generic directions,
while we have no complete algorithm, we show how the desired estimates
can be written as integral transforms and compute these in a variety
of cases. We also give the first results discussing transitions in
asymptotics around non-generic directions.

The remainder of the paper is structured as follows.  General background
related to ACSV is given in Section~\ref{sec:ACSV}.  Results and motivating
examples are given in Section~\ref{sec:results}.  
Section~\ref{sec:simplegeneric} gives the heart of the analysis,
under the assumption of linear independence of the $\{ \ell_j \}$.
Here, the topological decomposition of the cycle of integration
in Cauchy's integral formula is decomposed into certain Morse-theoretically
determined cycles for which the integral has an easily computed
asymptotic form.  Section~\ref{sec:nonsimple-generic} extends the
results to allow for linear dependencies among the $\{ \ell_j \}$.
Finally, Section~\ref{sec:simple-nongeneric} studies a number of
cases where $\rr / |\rr|$ approaches a boundary direction, giving
complete results in a scaling window of width $|\rr|^{1/2}$ in the
case of an ordinary boundary direction 
in terms of negative moments of Gaussian random variables.  

\begin{rem}
Some of our exposition follows the textbook~\cite{melczer-book} of the
second author, which was written at the same time as much of this paper.
In addition to a self-contained presentation that makes clearer the
relationship between our arguments and more general results in Morse theory,
the new contributions of this paper include the first software implementation 
to verify and compute asymptotics in non-smooth cases of analytic 
combinatorics in several variables, the first results 
on transitions of behaviour around
non-generic directions, a proof that all critical points must be real
(giving a complete classification of when critical points can occur, and 
simplifying other arguments), and a classification of when
drops in the `neighbourhood' exponential growth of a sequence can occur.
\end{rem}

ACSV requires a number of techniques not always familiar to 
combinatorialists: Morse theory, computational algebraic geometry
and the theory of singular integral transforms.  Nevertheless, 
our aim is to provide an introductory exposition.  In order to 
help motivate the homological arguments taken in modern approaches 
to this topic, we illustrate our techniques on several examples and
provide a Maple implementation of our work at
\begin{center}
\url{https://github.com/ACSVMath/ACSVHyperplane}
\end{center} 

\section{ACSV background} \label{sec:ACSV}

The use of analytic techniques to derive asymptotic information about a 
sequence $(a_n)$ from properties of its generating function $f(z) = 
\sum_{n=0}^\infty a_nz^n$ is the domain of \emph{analytic combinatorics}.  
When the generating function $f(z)$ represents an analytic function at 
the origin, Cauchy's integral formula implies
\[ a_n = \frac{1}{2\pi i} \int_C f(z) \frac{dz}{z^{n+1}}, \]
where $C$ is any positively oriented circle sufficiently close to the 
origin.  By deforming the domain of integration $C$, one can typically 
use classical integral methods to obtain asymptotic results.  Standard 
references include Flajolet and Sedgewick~\cite{FlajoletSedgewick2009}, 
Odlyzko~\cite{Odlyzko1995}, and Henrici~\cite{Henrici1977}.

More recently, ACSV has developed tools for the multivariate asymptotic 
analysis of generating functions.  Fix a dimension $d\in\N$. 
Given a multi-dimensional vector $\bz = (z_1,\dots,z_d) \in \C^d$ 
and index $\bi \in \Z^d$, let $\bz^{\bi} := z_1^{i_1} \cdots 
z_d^{i_d} \in \C$.  Generalizing from the univariate case, if the 
multivariate generating function
$$ F(\bz) = \sum_{\rr \in \N^d} a_{\rr} \bz^{\rr} $$
represents a power series at the origin then the Cauchy integral formula gives 
an analytic representation
\begin{equation} \label{eq:intoCIF} 
a_{\rr} = \left(\frac{1}{2\pi i}\right)^d \int_{\mT} 
   F(\bz) \frac{d\bz}{\bz^{\rr + \bone}},  
\end{equation}
where $\mT$ is the product of sufficiently small positively oriented 
circles.  The aim is often to determine asymptotics of the sequence 
$[\bz^{\br}]F(\bz) = a_{\rr}$ as $\rr = n\rhat$ and 
$n \rightarrow \infty$, for some fixed \emph{direction} 
$\rhat \in \R_{>0}^d$. 
Although $a_{\br}$ is only non-zero when $\br$ has non-negative 
integer coordinates, the theory as detailed in this paper shows that 
asymptotics typically vary smoothly with $\br$, allowing one to make 
asymptotic statements about $a_{\br}$ for generic directions $\rhat$ 
or when a normalization of $\rr$ approaches $\rhat$ sufficiently 
quickly (this will be made precise below).

The earliest results in this 
area~\cite{BenderRichmond1983,GaoRichmond1992,Hwang1998,BenderRichmond1999} 
were based on showing that the \emph{sections} $\sum_{\br : r_1 = k} 
a_{\br}\bz^{\br}$ are well approximated by a quasi-power $C_kz_1^k 
g(z_2,\dots,z_d)^k$, leading to a central limit theorem and Gaussian 
behaviour of the generating function coefficients. Since the early 2000s, 
a systematic program to determine asymptotics of multivariate generating 
function coefficients has been developed by the \emph{analytic combinatorics
in several variables} project\footnote{See \url{acsvproject.com} for a
listing of papers in this project.}~\cite{PW1,PW2,BP-cones,PW-book,
BaryshnikovMelczerPemantle2021,melczer-book}.
In the earliest of these works~\cite{PW1,PW2} the deformations were accomplished
by {\em ad hoc} surgeries.  In~\cite{BP-cones}, existence of the 
relevant deformations was shown to follow from the theory of 
hyperbolic functions~\cite{ABG} and a Morse-theoretic structure
was provided to compute and interpret them; this Morse-theoretic
approach was recently made rigorous in~\cite{BaryshnikovMelczerPemantle2021}
by working around certain standard assumptions of Morse 
theory that don't hold in ACSV contexts.

Let $F(\bz)$ be a meromorphic function with poles along an algebraic
variety $\sing$, and let $\mM = \C_*^d 
\setminus \mV$ denote the domain on which the integrand of the Cauchy 
integral~\eqref{eq:intoCIF} is holomorphic.  The value of the 
Cauchy integral depends 
only on the homology class of $\mT$ in $H_d (\M)$ so, for instance, 
we can deform $\mT$ in $\mM$ without changing the value 
of the integral.  

As $\rr \to \infty$ the magnitude of the Cauchy integrand 
of~\eqref{eq:intoCIF} is controlled by the points $\zz$
for which $|\zz|^{-\rr}$ is maximized.  Integrals are usually easiest
to approximate when the contribution from integration away from
the maximum value of the integrand is negligible, leading us to try and
deform $\mT$ to a \emph{minimax contour}.  
The key to computing integral transforms
is that minimizing $\max_{\mT} |\zz|^{-\rr}$ will produce a contour $\mT$ where,
near the point where $|\zz|^{-\rr}$ is maximized, 
the integrand is in {\em stationary phase}, 
and may thus (typically) be approximated using standard analytic techniques.
This minimax problem is not affected by rescaling $\rr$, so 
the analysis depends only on the vector $\rhat := \rr / |\rr|$ 
where $|\rr| := r_1 + \cdots + r_d$
is $L^1$-norm of $\rr$ (the $L^1$-norm is natural in many contexts, 
but can be replaced by another norm if desired).

Taking logarithms, we need to look near points $\zz$ such that 
the {\bf height function} 
\begin{equation} \label{eq:h}
h_{\rhat} (\zz) := - \sum_{j=1}^d \hat{r}_j \log |z_j| \, .
\end{equation}
is maximized.
The decomposition of cycles into canonical height-minimizing cycles
is the province of Morse theory (when $\sing$ is smooth) and 
stratified Morse theory (when $\sing$ is a stratified space such
as an algebraic variety).  

The result of a Morse-theoretic analysis
is as follows.  
Any algebraic set $\sing$ can be decomposed into a finite set of 
smooth manifolds known as strata, each of which (under the assumptions of
this paper) contains 
a finite set of critical points for the height function (points
where the differential of the height function restricted to the strata
is zero).
Any cycle $C$ may be written as the sum of certain `attachment cycles' for
which height is maximized uniquely at one of these critical 
points\footnote{The fact that the original cycle is in $\M \subseteq
\sing^c$ while the Morse theory is done on $\sing$ will be 
reconciled in Section~\ref{sec:simplegeneric}.}.
These attachment cycles are known as {\bf linking tori}, the one near
a critical point $\bs$ being denoted by $\torus_{\bs}$.  The integral 
over one of these linking tori is a standard integral transform whose
asymptotics are sometimes easy to compute (e.g., in the generic
directions referenced above) or are in the province of singular
integral theory and are explicit to various degrees depending on
the geometry near the critical point.  Restricting to the case
where $\sing$ is the union of hyperplanes results in a bound
on the complexity of the singular integral, and therefore a reasonably
complete answer.

The case where the denominator of $F$ is the product of linear
functions with real coefficients simplifies for a second reason,
beyond the bounded complexity of the geometry of $\sing$.  The
analysis from above may be summarized by a homological computation
\begin{equation} \label{eq:homological}
[\mT] = \sum_{\bs \in \Omega} k_{\bs} \torus_{\bs} \, ,
\end{equation}
where $\Omega$ is the set of stratified critical points for $h$ on $\sing$
and the $k_{\bs}$ are integer invariants that typically hard to compute.
When $\bs$ lies on the boundary of the domain of convergence of $F(\zz)$
and the singular geometry at $\bs$ is sufficiently nice, it is known that
$k_{\bs}\in\{\pm1,0\}$. When this (rather strong and computationally expensive)
property cannot be established, the most promising method to date 
for determining the coefficients $k_{\bs}$ relies on the fact that,
when $\rhat$ is a rational direction, the coefficient sequences $a_{n 
\br}$ satisfy linear differential equations with polynomial 
coefficients~\cite{Christol1984,Lipshitz1988} which can be computed 
using so-called Creative Telescoping methods~\cite{Lairez2016}. 
Numeric analytic continuation~\cite{Mezzarobba2010} can then be used to 
rigorously determine the constants appearing in asymptotics up to any 
fixed accuracy, recovering the integers $k_{{\bs}}$. 
Representation of the denominator of $F$ as a product of real linear
functions allows explicit computation of $\{ k_{\bs} \}$, going through
the intermediate homology basis of {\bf imaginary fibers}, which relies
in turn on convexity properties following from the form of the
denominator of $F$.

\section{Notation and results} \label{sec:results}

In this section we state our main results, after some
basic definitions.

\subsection{Definitions} 
\label{ss:definitions}
We begin by introducing the quantities appearing in our analysis. 
We make use of terminology from ACSV, the study of hyperplane
arrangements, and topological constructions.

\subsubsection*{Hyperplane arrangements}
Fix a ratio 
$$F(\bz) = F(z_1,\dots,z_d) = \frac{G(\bz)}{H(\bz)} \,, $$
where $G(\bz)$ is an entire function and
$$ H(\bz) = \prod_{j=1}^m\ell_j(\bz)^{p_j} $$
for integers $p_j \geq 1$ and real linear functions
$$\ell_j(\bz) = 1 - (\bbj,\bz) = 1 - b^{(j)}_1z_1 - \cdots - b^{(j)}_d z_d \, .  $$
We always assume that $G$ and $H$ are coprime, which in this case simply 
means that $G$ does not identically vanish on the zero set of one of
the $\ell_j$ (when $G$ is a polynomial this means $G$ and $H$ are coprime
polynomials). The singular set
$$ \mV = \{\bz \in \C^d : H(\bz)=0 \} $$
is composed of a union of hyperplanes and thus defines a \emph{hyperplane 
arrangement}~\cite{OrlikTerao1992}, a well-studied combinatorial 
and topological object from which we borrow some of our terminology.  
The notation $\sing_{\R}$ denotes the real arrangement, a union of 
corresponding real hyperplanes in $\R^d$ that is the intersection of 
$\sing$ with $\R^d$.

Given complex-valued functions $g_1,\dots,g_s$, we let 
$\bV(g_1,\dots,g_s)$ denote their set of common complex zeroes. The 
hyperplane arrangement $\mV = \bV(\ell_1 \cdots\ell_m)$ can be decomposed into 
closed \Em{flats}: for each subset $\{k_1,\dots,k_s\} \subset 
\{1,\dots,m\}$ the flat $\mV_{k_1,\dots,k_s} := 
\bV(\ell_{k_1},\dots,\ell_{k_s})$ is the intersection of the appropriate 
hyperplanes.  Because two flats with distinct index sets could be
equal, we let $\mA$ be the collection of maximal subsets 
of $\{1,\dots,m\}$ corresponding to non-empty distinct flats: this is the 
\Em{hyperplane arrangement} defined by $\mV$.  Thus, by definition, $\mV_{S 
\cup \{k\}} \neq \mV_S$ for any $S \in \mA$ and $k \notin S$.

\begin{defn}[simple arrangements]
The arrangement $\A$ (or singular set $\mV$, or rational function $F(\bz)$) 
is said to be \Em{simple} if for any subset $\{ k_1 , \ldots , k_s \} 
\subseteq [m]$ of indices such that the flat $\sing_{k_1,\dots,k_s}$
is nonempty, the coefficient vectors $\bb^{(k_1)} , \dots , \bb^{(k_1)}$ 
are linearly independent.  In other words, $\A$ is simple if hyperplanes with common 
points have linearly independent normals.  
\end{defn}

The \Em{stratum} $\mS_S$ corresponding to any $S \in \mA$ is defined as 
the flat $\mV_S$ with all subflats removed,
$$ \mS_S := \mV_S \, \setminus \, \bigcup_{\mV_T \subsetneq \mV_S} 
   \mV_T \, . $$
The collection of strata form a \Em{stratification} of $\mV$, i.e., a 
partition of $\mV$ into disjoint smooth sets, and the flat $\mV_S$ is the 
closure of the stratum $\mS_S$.  The \Em{dimension} of $\mS_S$ 
is the dimension of $\mV_S$ as an algebraic set.  It is also the
dimension of $S$ as a complex manifold and half the dimension of
$S$ as a real manifold.  When $\mV$ is simple then $\mA$ is the 
collection of all subsets of $\{1 , \dots , m\}$ such that the
corresponding hyperplanes have non-empty intersection, and the dimension 
of a stratum $\mS_S$ is simply $d-|S|$.  
\begin{example}
Figure~\ref{fig:arrangement} shows an arrangement with two hyperplanes
and three non-empty flats.

\begin{figure}
\centering
\includegraphics[width=3in]{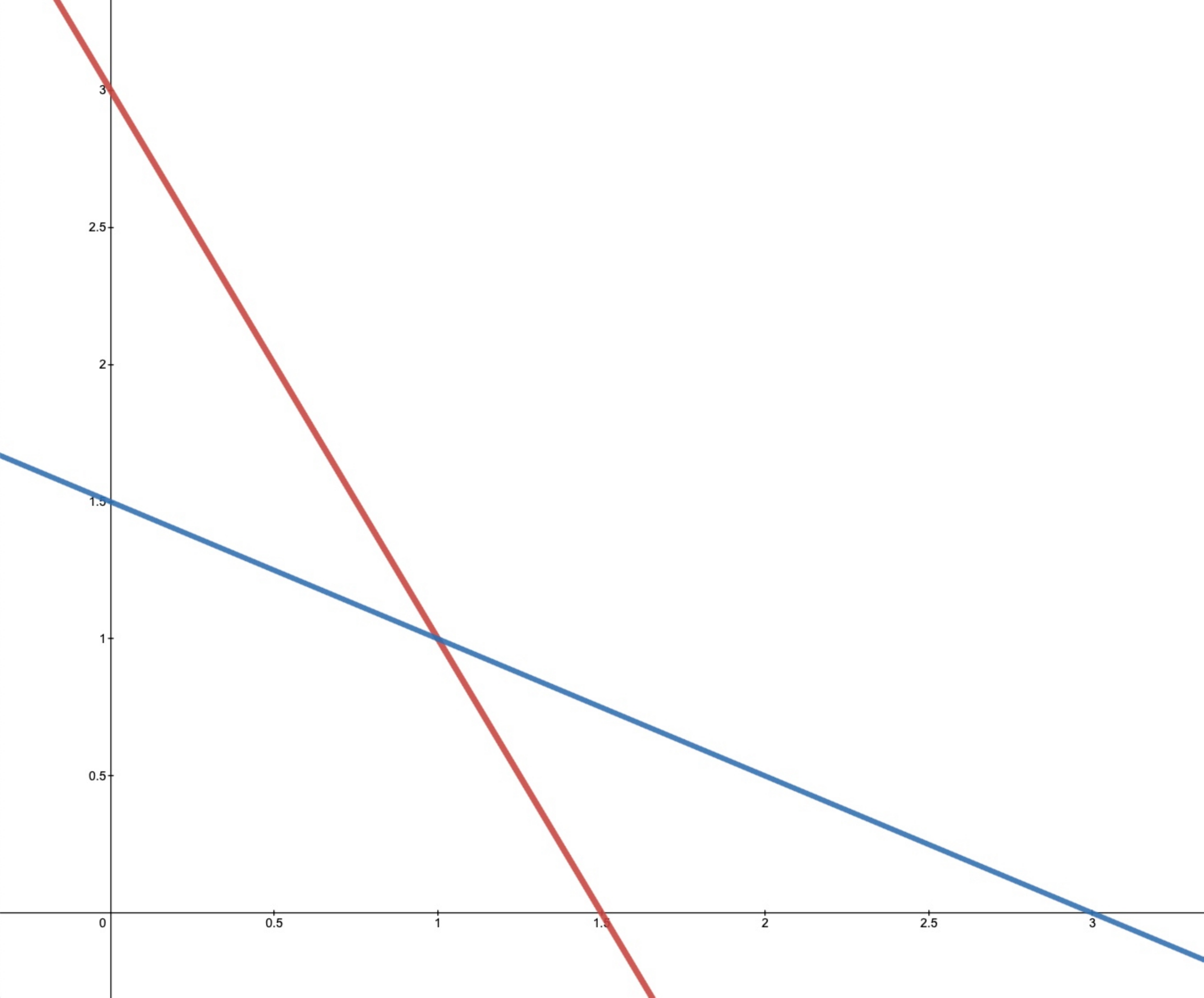}
\caption{The real part of a simple arrangement of two hyperplanes 
in two dimensions}
\label{fig:arrangement}
\end{figure}
\end{example}

\subsubsection*{Critical points}

Let $\rr\in\R_{>0}^d$. By definition, a \Em{critical point} 
of $h=h_{\rr}$ on a stratum
$\mS$ is a point where the differential of the restricted map $h|_\mS$
is zero. The critical points on a stratum satisfy an explicit set of 
algebraic equations, which we now describe.  Let $k_1, \ldots , k_s
\in [m]$ be any indices such that $\{ \bb^{(k_1)} , \ldots , \bb^{(k_s)} \}$
are linearly independent, defining a flat $\sing_S$.  
Because the differential of the unrestricted map 
$h_{\rr}$ is given by its gradient, a 
critical point on $\sing_S$ is characterized by
a rank deficiency in the matrix
\begin{equation} \label{eq:M}
M(\zz) := \begin{pmatrix} -\nabla\ell_{k_1} (\zz) \\ \vdots \\ 
   -\nabla\ell_{k_s} (\zz) \\ -\nabla h \end{pmatrix} 
   = \begin{pmatrix} &\bb^{(k_1)}& \\ &\vdots& \\
&\bb^{(k_s)}& \\ r_1/z_1 & \cdots & r_d/z_d \end{pmatrix} \, .  
\end{equation}
Up to a reordering of variables and the $\ell_{k_j}$, we may assume 
without loss of generality that $M$
contains pivots in its first $s$ diagonal entries.  For $j=s+1,\dots,d$,
let $M_j$ denote the $(s+1) \times (s+1)$ matrix constructed from the
first $s$ columns of $M$ together with its $(s+j)$th column.  
Clearing denominators, the critical points on $\mV_S$ are the 
real solutions of a polynomial system defined by
\begin{equation} \label{eq:critpoints} 
\ell_{k_1} = \cdots = \ell_{k_s} = \det M_1 = \cdots = \det M_{d-s} = 0 \, .
\end{equation}

As expected, the critical points are unchanged by scaling $\rr$,
and thus only depend on the normalized vector $\rhat=\rr/|\rr|$.

\begin{lemma}
If $\bs$ is a critical point of $h_\rr$ then $\bs \in \R^d$.
\end{lemma}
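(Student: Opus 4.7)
The plan is to translate the rank-deficiency condition \eqref{eq:M} into Lagrange-multiplier form and then exploit the reality of the coefficient vectors $\bb^{(k_j)}$.

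First I would observe that, up to reordering, the condition that $M(\bs)$ has rank at most $s$ is equivalent to the existence of complex multipliers $\lambda_1,\ldots,\lambda_s \in \C$ with
\[
\frac{\hat r_i}{s_i} \;=\; \sum_{j=1}^s \lambda_j\, b^{(k_j)}_i, \qquad i=1,\ldots,d,
\]
together with the stratum constraints $(\bb^{(k_j)},\bs)=1$ for $j=1,\dots,s$. I would then split everything into real and imaginary parts, writing $s_i=x_i+iy_i$ and $\lambda_j=\mu_j+i\nu_j$. Since $\bb^{(k_j)}\in\R^d$, comparing imaginary parts of the critical-point equation (using $\hat r_i/s_i=\hat r_i\bar s_i/|s_i|^2$) gives
\[
-\frac{\hat r_i\, y_i}{|s_i|^2} \;=\; \sum_{j=1}^s \nu_j\, b^{(k_j)}_i.
\]

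The key step is to multiply this identity by $s_i$ and sum on $i$. On the right-hand side, the stratum constraints collapse the sum to
\[
\sum_{j=1}^s \nu_j \, (\bb^{(k_j)},\bs) \;=\; \sum_{j=1}^s \nu_j,
\]
which is a \emph{real} number because each $\nu_j\in\R$. On the left-hand side, expanding $s_i=x_i+iy_i$ yields
\[
-\sum_{i=1}^d \frac{\hat r_i\, y_i\, x_i}{|s_i|^2} \;-\; i \sum_{i=1}^d \frac{\hat r_i\, y_i^{\,2}}{|s_i|^2}.
\]
Equating imaginary parts forces $\sum_i \hat r_i y_i^{\,2}/|s_i|^2 = 0$, and because $\hat r_i>0$ and $|s_i|>0$ at a critical point, every $y_i$ must vanish. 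Hence $\bs\in\R^d$.

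The only subtle point is writing down the Lagrange-multiplier form of the critical-point equation in the first place: one must verify that a real-valued height on the complex submanifold $\sing_S$ is critical at $\bs$ precisely when $\partial h|_{\bs}=-\tfrac12\sum_i(\hat r_i/s_i)\,dz_i$ lies in the complex span of the holomorphic differentials $d\ell_{k_j}$. Once this reformulation is in place, the argument above is short and the reality of $\bs$ drops out of a single positivity argument applied to the imaginary part of $\sum_j\nu_j$.
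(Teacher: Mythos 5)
Your proof is correct and takes essentially the same route as the paper: both extract the imaginary part of the Lagrange-multiplier characterization of critical points, pair it with a suitable vector so that the stratum constraint $\ell_{k_j}(\bs)=0$ forces the right-hand side to be real, and then conclude from the resulting identity $\sum_i \hat r_i y_i^2/|s_i|^2 = 0$ that $\yy = \bzer$. The only cosmetic difference is that you pair with $\bs$ itself and then equate imaginary parts, while the paper pairs with $\yy$ and uses $\yy\cdot\bb^{(j)}=0$ to annihilate the right-hand side outright.
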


\begin{proof}
Assume, without loss of generality, that $\bs$ lies in the stratum $\mS_{1,\dots,s}$
. Then there exist constants $\lambda_1,\dots,\lambda_s\in\C$
such that
\[ \lambda_1 \bb^{(1)} + \cdots + \lambda_s \bb^{(s)} 
= \left(\frac{r_1}{\sigma_1},\dots,\frac{r_d}{\sigma_d}\right). \]
Write $\bs = \xx + i\yy$ and $\lambda_j=a_j + ic_j$ for 
$\xx,\yy\in\R^d$ and each $a_j,c_j\in\R$. Taking the imaginary 
parts of both sides of the above equation gives
\[ c_1 \bb^{(1)} + \cdots + c_s \bb^{(s)}
= -\left(\frac{r_1y_1}{x_1^2+y_1^2},\dots,\frac{r_dy_d}{x_d^2+y_d^2}\right), \]
and taking the dot product of both sides of this equation with $\yy$ implies
\[ \sum_{j=1}^s c_j \left(\bb^{(j)} \cdot \yy\right) 
= - \sum_{k=1}^d\frac{r_ky_k^2}{x_k^2+y_k^2}. \]
Since $1-(\xx+i\yy)\cdot \bb^{(j)} = 0$ for all $1 \leq j \leq s$, we see that
$\yy\cdot\bb^{(j)}=0$ for all $j$, and thus
\[ 0 = \sum_{k=1}^d\frac{r_ky_k^2}{x_k^2+y_k^2}. \]
Each term on the right-hand side of this equation is non-negative, 
and zero if and only if $y_k$ vanishes, so we must have 
$y_k = 0$ for all $1 \leq k \leq d$, meaning $\bs=\xx\in\R^d$ is real. 
\end{proof}

The function $h$ is continuous and strictly convex on each
orthant of $\R^d$. If $\sing_{S,\R} = \sing_S \cap \R^d$ is the real 
part of a flat and the intersection of 
$\sing_{S,\R}$ with an open orthant $O \subset \R^d$
is bounded, then $\sing_{S,\R} \cap O$
contains a unique critical point of $h$, 
which is necessarily a local minimum by convexity of $h$. 
If $\sing_{S,\R} \cap O$ is unbounded,
which happens in every orthant when $\sing_{S,\R}$ is parallel to
at least one coordinate plane, then the infimum
of $h$ on $\sing_{S,\R} \cap O$ is $-\infty$ and $h$ has
no critical point on $\sing_{S,\R} \cap O$.
Thus, for each index set $S \in \A$ the critical points of the 
height function $h = h_{\rhat}$ on the stratum $\mS_S$ are
real, have non-zero coordinates ($h$ is not defined
when one of its coordinates is zero), and there is at most 
one in each quadrant of $\R^d$. 

We call a critical point of $h_{\br}$ restricted to the flat $\mV_S$ 
a critical point of the stratum $\mS_S$, although it may not lie in 
$\mS_S$ (it could be in a proper subflat).  Each critical point $\bs$
lies in a unique stratum of lowest dimension, which we denote 
$\mS (\bs)$.

\begin{defn}[critical sets and generic directions]
The \Em{critical set} $\Omega$ of $F$ (in the direction $\rhat$) 
is the union of all critical points of $h_{\rhat}$ on the strata $\mS_S$
as $S$ ranges over all index sets $S \in \A$. The direction $\rhat$ 
is said to be \Em{generic} if no critical point of $h_{\rhat}$ is 
a critical point of two distinct flats. 
\end{defn}

\begin{defn}[normal and lognormal cones] 
For any flat $\sing_S$, the \Em{(positive) normal cone} $N(S)$ is the cone
$$N(S) = \left\{ \sum_{j \in S} a_j \bb^{(j)} : a_j > 0 \right\} \subset \R^d \,$$
and for any $\bs\in S \cap \R^d$ 
the \Em{(positive) lognormal cone} $\tilde{N}_\bs(S)$ is the cone
$$\tilde{N}_\bs(S) = \left\{ \sum_{j \in S} a_j \tilde{\bb}_\bs^{(j)} : a_j > 0 \right\} \subset \R^d \,$$
spanned by the \Em{lognormal vectors} $\btil_\bs^{(j)} := 
\left(b^{(j)}_1 \sigma_1 , \ldots , b^{(j)}_d \sigma_d\right)$.
For convenience, we write 
$$N(\bs) := N(S(\bs)) \text{ and } 
\tilde{N}(\bs) := \tilde{N}_\bs(S(\bs)).$$ 
As we will see, the ACSV analysis
depends on identifying singularities $\bs$ with $-(\grad h_{\rhat})(\bs)\in N(\bs)$
or, equivalently, with $\rhat \in \tilde{N}(\bs)$.
\end{defn}

\begin{example} \label{ex:main}
We introduce a running example that clarifies some of the definitions, using $x$
and $y$ in place of $z_1$ and $z_2$ for clarity.  
Let $F(x,y) := 1/(\ell_1(x,y) \cdot\ell_2(x,y))$ where
$$\ell_1 = 1 - \frac{2x+y}{3} \qquad 
   \text{and} \qquad\ell_2 = 1 - \frac{x+2y}{3}. $$
The real part of this hyperplane arrangement is illustrated
in Figure~\ref{fig:hyper}.  The flats are the two lines $\sing_1$ and $\sing_2$, 
and their intersection point $\sing_{1,2}=\{(1,1)\}$.  

\begin{figure}
\centering
\includegraphics[width=0.35\linewidth]{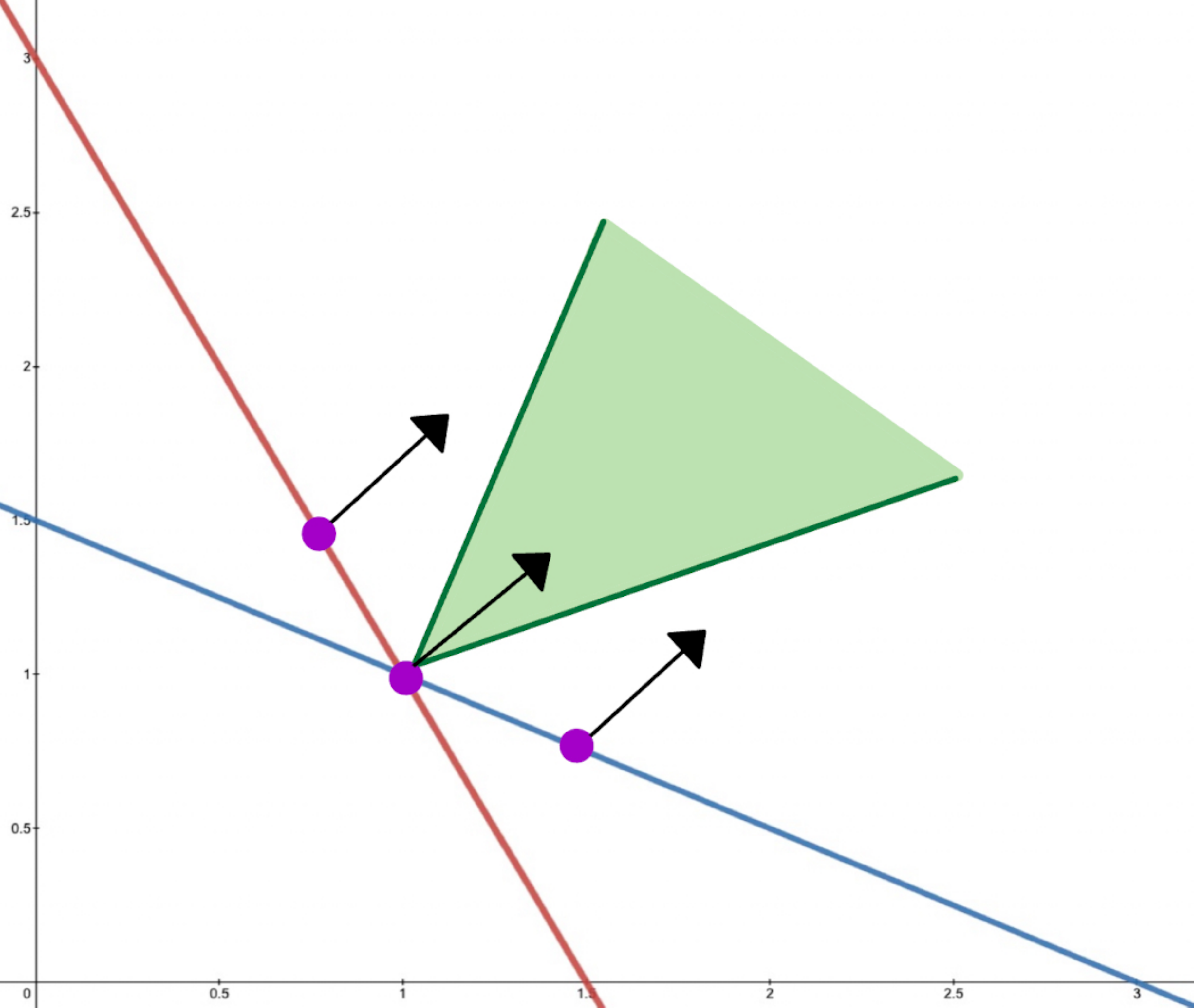} \hspace{0.5in}
\includegraphics[width=0.35\linewidth]{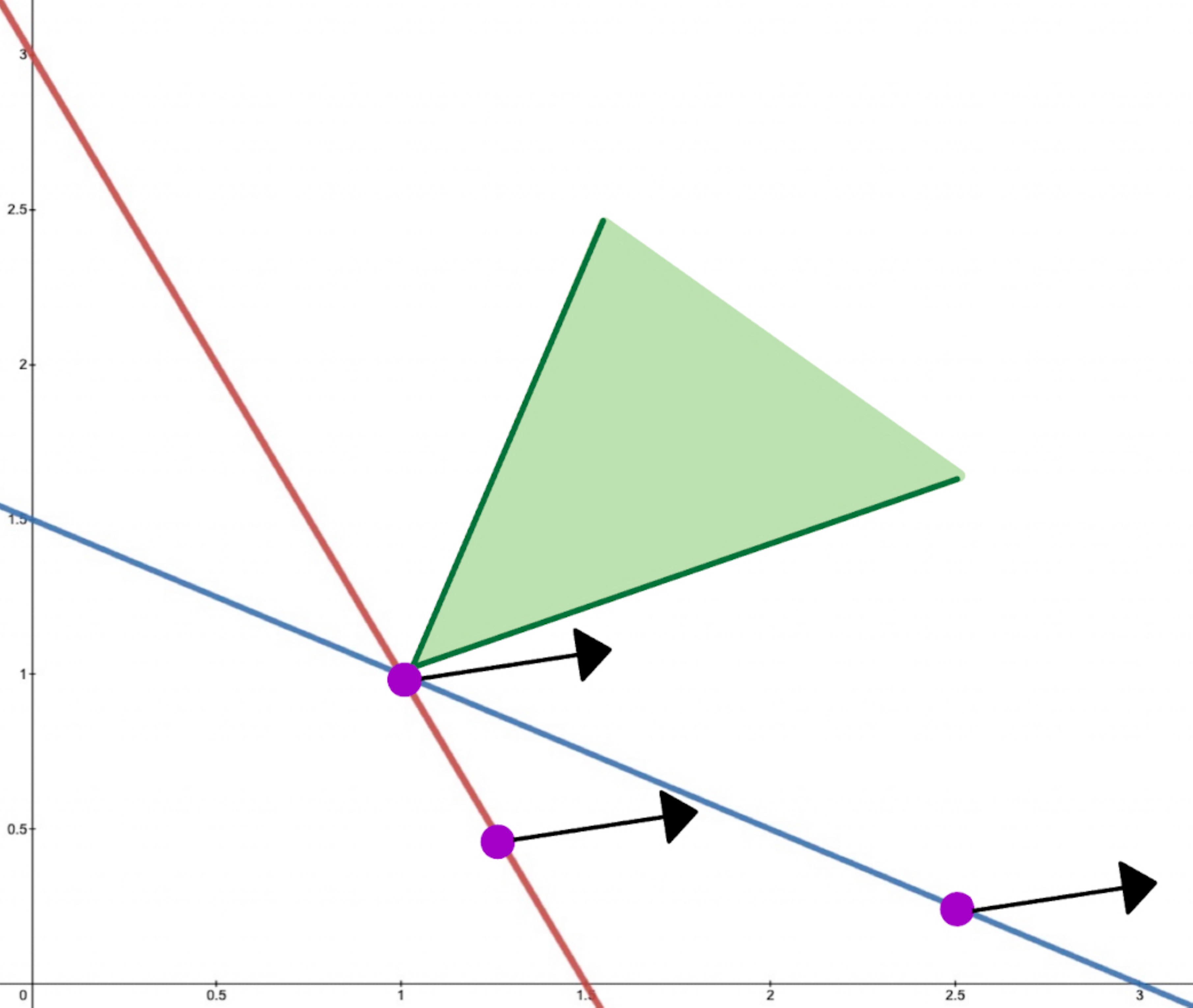}
\caption{The real part of the hyperplane arrangement
${\color{red}\ell_1(x,y)}{\color{blue}\ell_2(x,y)}=0$ in
Example~\protect{\ref{ex:main}}, together with critical points, the
lognormal cone $\tilde{N}(1,1)$ and $\rhat$, when $\rr=(1,1)$ (left) and
$\rhat=(5,1)$ (right).}
\label{fig:hyper}
\end{figure}

For the flat $\sing_1$ the critical point equations in the direction $\rr$ are 
$\ell_1 = \det M_1 = 0$, with the latter expressing that 
the normal $\bb^{(1)}$ of $\ell_1$ is parallel to 
the gradient $(\nabla h_\rr)(x,y) = (r_1 / x,r_2 / y)$. 
This gives the system 
\begin{eqnarray*}
2x + y & = & 3 \\
\frac{y}{r_2} & = & \frac{2x}{r_1} \,,
\end{eqnarray*}
with solution $\bs_1 = \left(\frac{3\hat{r}_1}{2} , 3\hat{r}_2\right)$.  
A similar computation on the flat $\sing_2$ gives the critical point
$\bs_2 = \left(3\hat{r}_1 , \frac{3\hat{r}_2}{2}\right)$, while
the single point $\bs_{1,2}=(1,1)$ in $\sing_{1,2}$ is trivially critical.
We note that $\bs_1$ lies in the stratum $\mS_1$ 
unless it coincides with $\bs_{1,2}$, 
which occurs when $\hat{r}_1=2/3$. Similarly, $\bs_2$ lies in the stratum $\mS_2$
unless $\hat{r}_1=1/3$. Thus, $\rr$ is generic unless it
is a multiple of $(1,2)$ or $(2,1)$.

Figure~\ref{fig:hyper} shows the lognormal cone $\tilde{N}(1,1)$ and the
three critical points when $\rr=(1,1)$ and $\rr=(5,1)$.  
Note that $\hat{\rr}$ lies inside $\tilde{N}(1,1)$ in the left
sub-figure, but does not in the right sub-figure. The non-generic directions
are precisely those where $\rr$ lies on the boundary of $\tilde{N}(1,1)$.
\end{example}

\subsubsection*{Topological definitions}

We now fix $\rhat$, write $h(\zz) := h_{\rhat}(\zz)$, let $\M_{\leq a}$ 
denote the sublevel set $\{ \zz \in \M : h(\zz) \leq a \}$, and set $\low$
to the least critical height, 
$$\low := \min \{ h(\bs) : \bs \in \Omega \} \, .$$
If $a < \low$ and $C$ is a cycle in $\M_{\leq a}$, then 
$C$ can be continuously deformed in $\M$ so that $\max \{ h(\zz) :
\zz \in C \}$ is as small as desired.  This follows from a 
non-proper extension of the fundamental Morse lemma described in~\cite{BMP-morse}
(because there are no so-called \emph{critical
points at infinity} in our case), or more directly from a concrete
modification of the downward gradient flow on $\M$, using a partition
of unity to keep the flow away from all strata of $\sing$.  It follows
that the pairs $H_d (\M , \M_{\leq a})$ with $a < \low$
are naturally isomorphic.  We denote any of these naturally
equivalent pairs by $(\M , -\infty)$ and use the symbol $\doteq$ to
denote equality of classes in $H_d (\M , -\infty)$.  

For those not topologically inclined, a class $[\mT]$ in 
$H_d (\M , -\infty)$ can simply be viewed as a domain of 
integration $\mT$ for the Cauchy integral~\eqref{eq:intoCIF}
which is defined up to points in $\mM$ of arbitrarily small 
height. The idea is that points at small height are asymptotically
negligible, so only the class of $\mT$ matters in our asymptotic 
calculations.

To that end, suppose that $C \doteq 0$, meaning $C$ is a null-homologous chain 
in $H_d (\M , -\infty)$.  For each $a < \low$, let $C_a$ be
a chain in $\M_{\leq a}$ such that $C \doteq C_a$, and let
$|C_a| =(2 \pi)^{-d} \mu (C_a)$ where
$\mu(C_a)$ is the area of $C_a$ defined by the $d$-dimensional Lebesgue measure.  
Because $C_a$ is supported on $\M_a$, the function $F$ is continuous on $C_a$,
hence $|F / \prod_{j=1}^d z_j|$ has some maximum value $f_a$ on $C_a$.  
It follows that for all $\rr$, 
\begin{eqnarray}
\left |(2 \pi i)^{-d} \int_C F(\zz) \zz^{-\rr} \frac{d\zz}{\zz} \right |
   & = & (2 \pi)^{-d} \left | \int_C F(\zz) \zz^{-\rr} \frac{d\zz}{\zz}
   \right | \nonumber \\
& \leq & f_a |\zz|^{-\rr} |C_a| \nonumber \\
& \leq & f_a |C_a| e^{|\rr| a} \label{eq:low}
\end{eqnarray}
because $|\zz|^{-\rr} \leq e^{|\rr| a}$ on $\M_{\leq a}$.
This yields the following result.

\begin{pr} \label{pr:low}
Suppose $C \doteq 0$.  Then the Cauchy integral integrated over $C$
is smaller than any exponential.  Specifically, for any $a < 0$ there
exists $K>0$ such that
$$\left |\int_C F(\zz) \zz^{-\rr} \frac{d\zz}{\zz} \right |
   \leq K e^{a |\rr|} \, .$$
Furthermore, if $F$ grows at most polynomially then
$$\int_C F(\zz) \zz^{-n\brhat} \frac{d\zz}{\zz} = 0 \, $$
for all $n$ larger than some fixed natural number.
\end{pr}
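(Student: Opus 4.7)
The first inequality repackages the computation in \eqref{eq:low} from the preceding paragraph. Given $a<0$, pick $a'<\min(a,\low)$. Because $C\doteq 0$ in $H_d(\M,\M_{\leq a'})$, there is a representative $C_{a'}\subset\M_{\leq a'}$ such that the integral of the closed form $F(\zz)\zz^{-\rr}d\zz/\zz$ over $C$ equals the integral over $C_{a'}$. Applying the chain of inequalities \eqref{eq:low} with $a'$ in place of $a$ gives
\[
\bigl|\textstyle\int_C F(\zz)\zz^{-\rr}d\zz/\zz\bigr|\leq K e^{a'|\rr|},
\]
where $K:=f_{a'}\mu(C_{a'})$ depends on $a'$ (hence on $a$) but not on $\rr$. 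Since $a'\leq a<0$ and $|\rr|\geq 0$ we have $e^{a'|\rr|}\leq e^{a|\rr|}$, which gives the claim.

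For the second statement, write $I_n:=\int_C F(\zz)\zz^{-n\brhat}d\zz/\zz$: this is a single complex number independent of any deformation $C\doteq C_a$, so if the bound $|I_n|\leq f_a\mu(C_a)e^{an|\brhat|}$ can be made to tend to $0$ as $a\to-\infty$ we must have $I_n=0$. The plan is to construct a one-parameter family $\{C_a\}$ of representatives for which $f_a\mu(C_a)\leq Ke^{c|a|}$ with constants $K,c\geq 0$ independent of $a$. Then $|I_n|\leq Ke^{(c-n|\brhat|)|a|}$ (using $a=-|a|$), which tends to $0$ as $a\to-\infty$ whenever $n|\brhat|>c$, and hence $I_n=0$ for all $n>c/|\brhat|$.

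To build the family I would flow $C$ downward under the partition-of-unity-modified negative gradient field of $h$ described above the proposition, which stays inside $\M$ and decreases $h$ by any prescribed amount. Since $-\nabla h$ points in the direction of $\hat\rr$ in the logarithmic coordinates $u_j=\log|z_j|$, running the flow for time $t\sim|a|$ shifts each $u_j$ by $O(|a|)$, so on $C_a$ we have $|\zz|\leq e^{O(|a|)}$; the Jacobian of the flow between the log-coordinates and Lebesgue measure similarly contributes a factor of at most $e^{O(|a|)}$. Polynomial growth of $F$ away from $\sing$ then yields $|F|\leq e^{O(|a|)}$ on $C_a$, producing the required exponential bound on $f_a\mu(C_a)$.

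The main obstacle is geometric: one must verify that the partition-of-unity modification of the gradient flow, introduced to avoid $\sing$, keeps $C_a$ at a definite distance from $\sing$ uniformly in $a$, since otherwise the polynomial-growth bound on $F$ away from $\sing$ cannot be used. Once this uniform control near the singular set is established, the rest of the estimate is routine.
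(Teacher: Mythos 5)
Your proof takes essentially the same route as the paper: both parts rest on the bound~\eqref{eq:low}, and the second part is proved by fixing $\rr = n\brhat$, letting $a \to -\infty$, and showing the bound forces the fixed integral to vanish once $n$ exceeds a constant. The paper's proof simply asserts that $f_a$ and $|C_a|$ grow at most exponentially in $|a|$; your observation that one must control the distance of the flowed cycle $C_a$ from $\sing$ uniformly (so that the polynomial-growth bound on $F$ is usable) is a legitimate point that the paper's terse proof also leaves implicit, and it is routine to check via the partition-of-unity construction mentioned just above the proposition.
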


\begin{proof}
The first part is an immediate consequence 
of~\eqref{eq:low}.  The second follows by fixing $\rr=n\brhat$ and 
letting $a \to -\infty$.  Both $f_a$ and $C_a$ grow at most 
exponentially in $|a|$ (because $f_a$ grows at most polynomially
in $\zz$), hence for $|\rr|$ greater than the sum
of these, the right-hand side of~\eqref{eq:low} goes to zero.
As these are all bounds for the same fixed integral, its
value must be zero.
\end{proof}

We close this section with two more topological definitions that
will appear in the statements of results.  These are illustrated
in Figure~\ref{fig:fibers}.

\begin{defn}[imaginary fibers] 
Let $\xx\in\R_*^d$ be any point not in the real hyperplane arrangement $\sing_\R$.
The \Em{imaginary fiber} $\CC_\xx$ is the chain 
$$\CC_\xx = \xx + i \R^d = \{\xx+i\yy:\yy\in\R^d\},$$
oriented via the standard orientation on $\R^d$.
Because the linear functions $\ell_j$ defining $\A$ have real coefficients,
$\ell_j(\xx + i \yy) = 0$ only if $\ell_j(\xx) = 0 = \ell_j (\yy)$, so
the entire fiber $\CC_\xx$ is disjoint from $\sing$.
The point $\xx$ is called the \Em{basepoint} of the fiber $\CC_\xx$.
If $B$ is a connected component of the \Em{real domain of holomorphicity} 
$\mMR := \R_*^d \setminus \sing_\R$ then the homotopy
$$t \xx + (1-t) \xx' + i \R^d \text{ for } t \in[0,1]$$ 
defines a homotopy between $\CC_\xx$ and $\CC_{\xx'}$ 
for any $\xx, \xx' \in B$. Thus, the homology class of $\CC_\xx$ 
in $H_d (\M , -\infty)$ depends only on the component $B$
of $\mMR$ containing $\xx$, and we denote this homology
class by $\CC_B$.
\end{defn}

\begin{rem}
\label{rem:unbounded}
The height function $h_{\rhat}$ is unbounded from below on any unbounded
component of $\mMR$. Thus, $\CC_\xx \doteq 0$ whenever $\xx$ lies in an
unbounded component of $\mMR$ since we can move $\xx$ within $B$ 
until its height is less than $\low$, at which point the whole 
fiber $\CC_\xx$ lies in $\M_{ < \low}$.  The fact that imaginary 
fibers of components on which $h_{\rhat}$ is bounded from below
form a basis of $H_d (\M , -\infty)$ was proved in~\cite[page~268]{VaGe1987}.
\end{rem}

\begin{figure}
\centering
\includegraphics[width=0.3\linewidth]{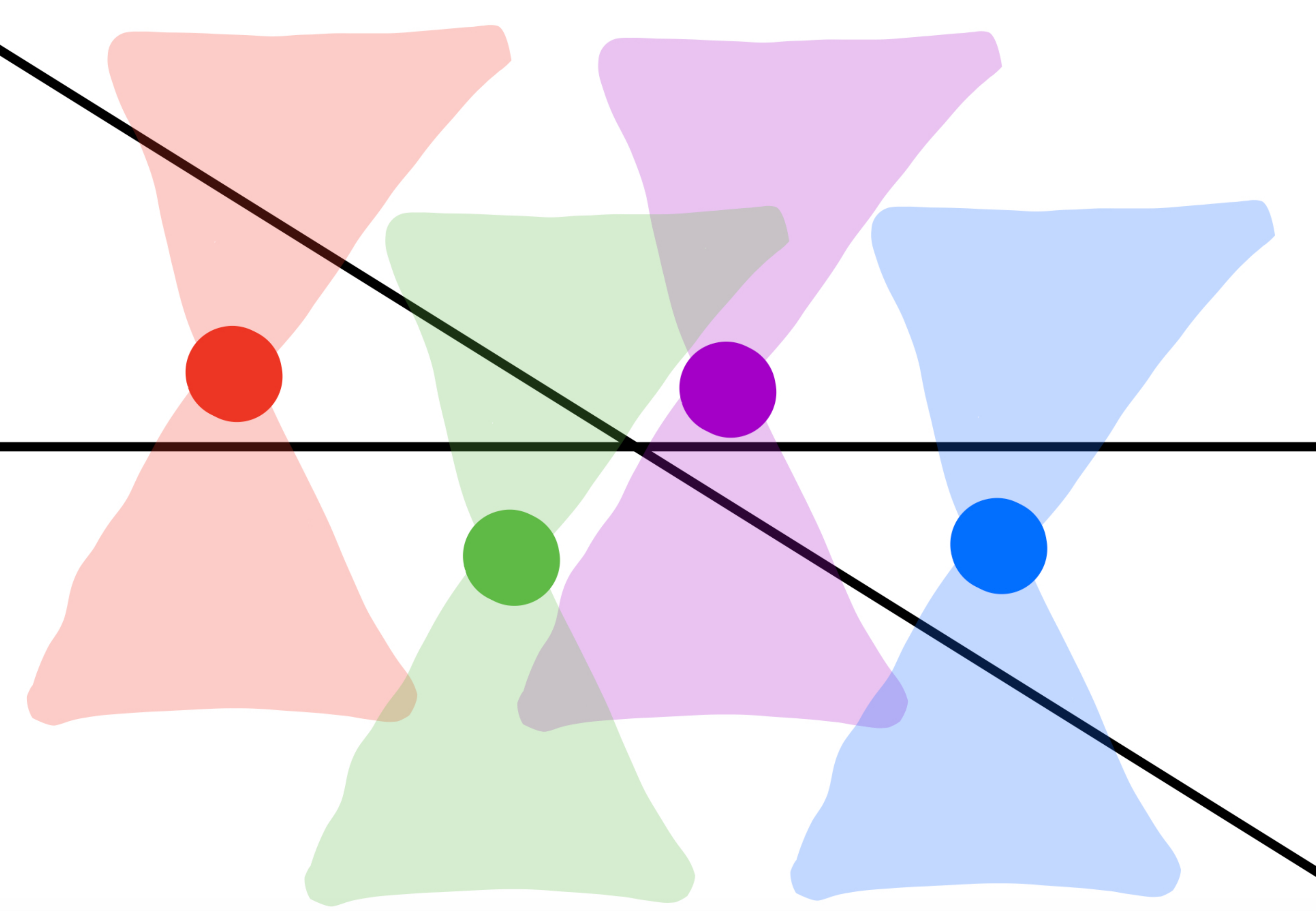}
\caption{An oblique rendering of 3-space is used to depict $\C^2$,
where the $z$-axis denotes imaginary directions in both the $x$ and $y$ 
coordinates.  We take $d=2$ and consider an arrangement of two intersecting
hyperplanes.  Each colored set represents an imaginary fiber over a 
real basepoint (large dot).  The alternating sum of these is the 
linking torus for the point $\bs$ at the intersection of the two 
hyperplanes whose real parts are the lines in the figure.}
\label{fig:fibers}
\end{figure}

\begin{defn}[signs and linking tori]
Let $\A$ be a simple arrangement, $\rhat$ be fixed, and $\bs \in \Omega$.
Suppose $\bs \in \partial B$ for some component $B$ of the real complement 
$\mMR = \R_*^d \setminus \sing_\R$ of the arrangement.  
Let $s$ be the codimension of the stratum containing $\bs$ and
let $k_1, \ldots , k_s$ be the indices such that the stratum $\mS_{\bs}$
is defined by $\ell_{k_1} = \cdots = \ell_{k_s} = 0$.  For each of
the $2^k$ components $B$ of $\mMR$ with $\bs \in \overline{B}$, 
define the \Em{sign} of $B$ with respect to $\bs$ by
$$\sgn_{\bs} (B) := \sgn \left ( \prod_{i=1}^s \ell_{k_i} (\xx) \right ),$$
where $\xx$ is any point of $B$.  Thus, the sign is positive on the 
component not separated from the origin by any of the $s$ hyperplanes, 
and alternates across each hyperplane.  We also define the \Em{absolute 
sign} of $B$ by $\sgn (B) = \sgn (\prod_{j=1}^d x_j)$ for any $\xx \in B$.
In other words, the sign is $+1$ if $B$ is in an even orthant and $-1$ 
otherwise.  The \Em{linking torus} of $\bs$ is
$$\torus_{\bs} := \sum_B \sgn (B) \sgn_{\bs} (B) \CC_B ,$$
where the sum is over all $B$ with $\bs \in \overline{B}$.
\end{defn}

\begin{example}[linking tori for Example~\ref{ex:main}]
Continuing Example~\ref{ex:main}, recall that the stratum $\mS_{1,2}$
is the singleton point $\bs_{1,2} := (1,1)$.  This is a stratum of
codimension two, and the linking torus of $\bs_{1,2}$ is
$$\tau_{12} := \tau_{\bs_{1,2}} = \CC_\ba - \CC_\bb + \CC_\cc - \CC_\dd,$$
where $\ba, \bb, \cc,$ and $\dd$ are points near $\bs$ in the four
regions carved out by removing the two real lines $\ell_1$ and
$\ell_2$ from $\R^2$, enumerated in counterclockwise order
with $\ba$ in the region whose closure contains the origin
(see Figure~\ref{fig:linking}).

\begin{figure} 
\centering
\includegraphics[width=4in]{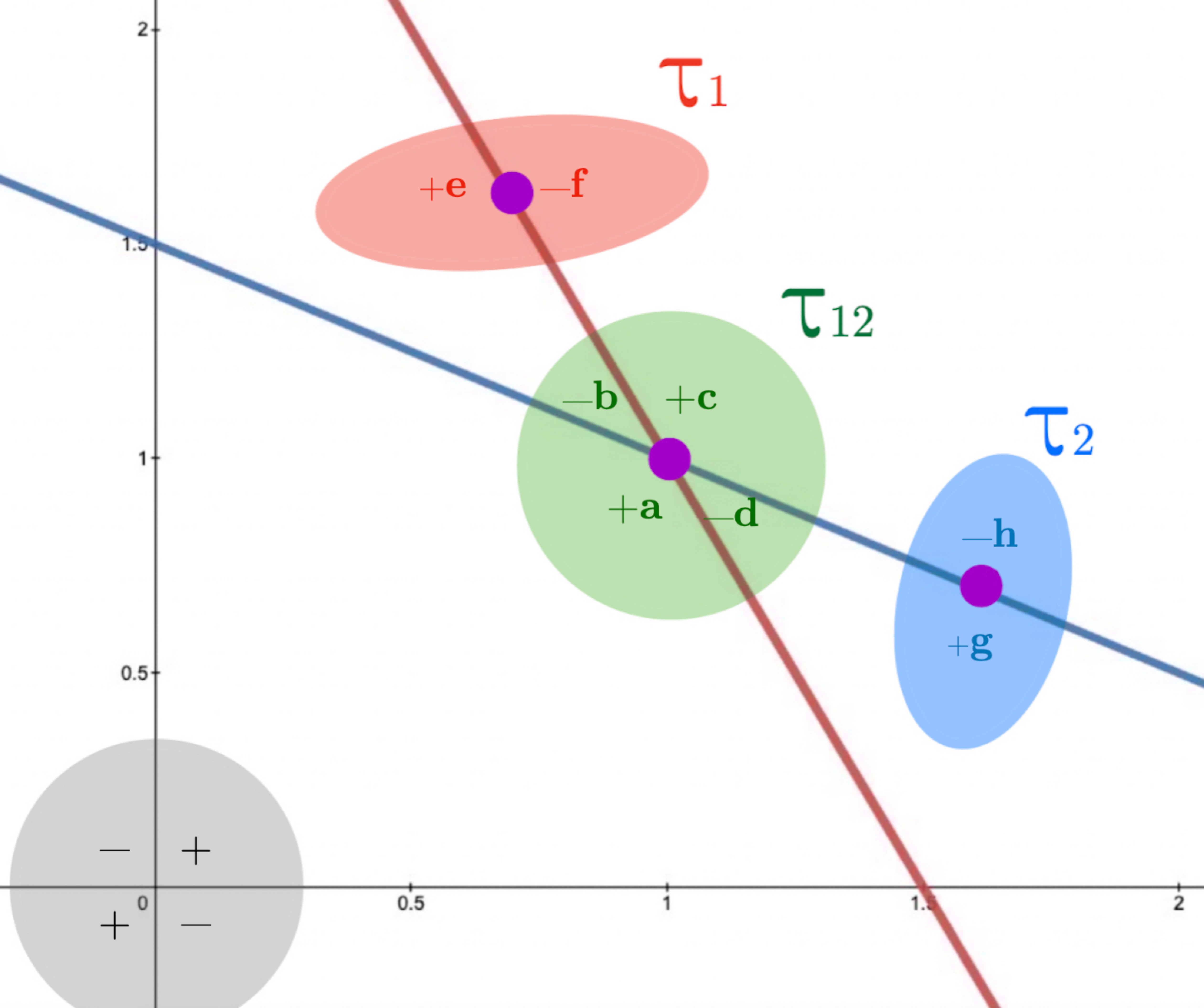}
\caption{Linking tori for Example~\protect{\ref{ex:main}}, with the
signs associated to each basepoint pictured. A `linking torus' around the
origin with the shown signs will be used to decompose the Cauchy 
integral representing generating function coefficients.}
\label{fig:linking}
\end{figure}

Suppose $\rr=(1,1)$, so we are in the situation pictured in 
Figure~\ref{fig:linking}. The critical point $\bs_1$ lies on the
codimension one stratum $\mS_1$ and has linking torus
$$\tau_{1} := \tau_{\bs_{1}} = \CC_\be - \CC_\bff,$$
where $e$ lies in the halfspace defined by $\ell_1$ whose
closure contains the origin. Similarly,
the critical point $\bs_2$ on $\mS_2$ has linking torus
$$\tau_{2} := \tau_{\bs_{2}} = \CC_\bg - \CC_\hh,$$
where $g$ lies in the halfspace defined by $\ell_2$ whose
closure contains the origin. 
Note that many of the basepoints in these linking tori lie in the
same components of $\mMR$. In particular,
$$ \CC_\bb = \CC_\be, \quad \CC_\dd = \CC_\bg, 
\;\; \text{ and } \;\; \CC_\cc = \CC_f = \CC_g, $$
so the sum of the linking tori around the critical points is
$$ \tau_1 + \tau_2 + \tau_{1,2} = \mC_\ba - \mC_\cc. $$
In fact, since $\cc$ lies in an unbounded component of $\mMR$
we have $\tau_1 + \tau_2 + \tau_{1,2} \doteq \mC_\ba$, which will 
come up again later in our analysis.
\end{example}

Linking tori appear, in a more general form, in stratified 
Morse theory.  The work of Goresky and MacPherson~\cite[Chapter~10]{GM}
shows how the topology of $\M$ is generated 
by certain \emph{attachment cycles} near $\bs$ as $\bs$ varies over the critical 
points of each stratum of $\sing$.  The cycle near a codimension
$s$ critical point $\bs$ has the structure of the product of
the {\em normal link}, a local cycle which in this case is a 
small $s$-torus, with the {\em tangential cycle},
a $(d-s)$-disk modulo its boundary (specifically the 
$(d-s)$-dimensional downward gradient flow subspace 
of $h$ on $\sing$ near $\bs$).
This toral tube intersects $\R^d$ in $2^s$ points, and is in
fact equal to $\torus_{\bs}$ in $H_d (\M , -\infty)$.

\subsection{Results}

We split our results into increasing complicated cases.

\subsubsection*{Simple arrangements in generic directions}

All of our topological arguments can already be seen
in the case of simple arrangements and generic directions.  
First, we identify which collection of critical points 
will be important in the analysis,
which is, in the general (non-linear) case, a very hard and 
perhaps undecidable step.

\begin{defn}[contributing points]
Assume $\A$ is simple and fix $\rhat \in \R_{>0}^d$.  
We call $\bs$ a \Em{contributing point} if $- \grad h_{\rhat} (\bs) \in N(\bs)$
and let $\contrib \subseteq \Omega$ denote the set of 
contributing points.
\end{defn}
\begin{unremarks}
$(i)$ The quantity $-\grad h_{\rhat} (\bs)$ is given by 
$(\hat{r}_1 / \sigma_1 , \ldots , \hat{r}_d / \sigma_d)$.
If this is in the positive cone $N(\bs)$ then it is in
linear space spanned by normals to hyperplanes containing $\bs$, so
every contributing point is a critical point.
$(ii)$ Suppose $- \grad h_{\rhat} (\bs)$ is on the relative
boundary of $N(\bs)$ (e.g., is in the span of 
some proper subset of the vectors $\{ \bb^{(k)} : k \in S(\bs) \}$).
Then $\bs$ is a critical point for the larger flat defined by
this proper subset of hyperplanes and $\rhat$ is not generic.
In fact, non-genericity is equivalent to the existence of a critical
point $\bs \in \Omega$ with $- \grad h_{\rhat} (\bs) \in \partial N(\bs)$.
\end{unremarks}
The heart of this paper, proven in Section~\ref{sec:simplegeneric}, 
is the following result.
\begin{theorem} \label{thm:topol} 
If $\A$ is simple and $\rhat$ is generic then the domain of integration
$\mT$ in the Cauchy integral representation~\eqref{eq:intoCIF} satisfies
$$\mT \doteq \sum_{\bs \in \contrib} \torus_{\bs} \, . $$
\end{theorem}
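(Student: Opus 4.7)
The plan is to work in the basis of $H_d(\M,-\infty)$ furnished by the imaginary fibers $\{[\CC_B]\}$ through bounded components $B \subseteq \mMR$ (Remark~\ref{rem:unbounded}) and verify the claimed identity by matching coefficients in this basis.

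First, I would expand $[\mT]$ in this basis. The polydisk torus $\mT = \mT_1\times\cdots\times\mT_d$ is a product of small circles around $0$, and iterating the one-dimensional identity that such a circle equals, modulo arbitrarily low sublevel sets, the imaginary fiber through $+\epsilon$ minus the fiber through $-\epsilon$ (justified by an elementary deformation in each copy of $\C_*$ minus the real singularities meeting that axis), one obtains
\[
[\mT]\;\doteq\;\sum_{O}\sgn(O)\,[\CC_{\xx_O}],
\]
where $O$ ranges over the $2^d$ orthants of $\R_*^d$ and $\xx_O\in O$ is any point near the origin. By Remark~\ref{rem:unbounded}, contributions from orthants whose near-origin component of $\mMR$ is unbounded drop out, so only $\sgn(B)\,[\CC_B]$ for bounded $B$ accumulating at the origin survive.

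Next, each linking torus is already written in the imaginary fiber basis by definition, so
\[
\sum_{\bs\in\contrib}\torus_\bs \;=\; \sum_{B}\sgn(B)\Bigl(\sum_{\bs\in\contrib,\;\bs\in\overline{B}}\sgn_\bs(B)\Bigr)[\CC_B].
\]
Matching coefficients of $[\CC_B]$ on both sides and cancelling the common $\sgn(B)$, the theorem reduces to the combinatorial identity
\[
\sum_{\bs\in\contrib,\;\bs\in\overline{B}}\sgn_\bs(B)\;=\;\mathbf{1}[\,B\text{ accumulates at the origin}\,]
\]
for every bounded component $B$. The key geometric input is the strict convexity of $h_\rhat$ on each orthant, established in the preceding lemma and discussion: on the closure of any bounded $B$ the height attains its minimum at a unique critical point $\bs_B$, and at $\bs_B$ the vector $-\grad h_\rhat(\bs_B)$ lies in the inward normal cone to $B$. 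Expanding this inward cone in the $\bb^{(j)}$ identifies $\bs_B$ as a contributing point precisely when $\sgn_{\bs_B}(B)=+1$, i.e., when $B$ is on the origin side of every hyperplane through $\bs_B$---which happens exactly when $B$ accumulates at the origin. Genericity of $\rhat$ rules out degeneracies where $-\grad h_\rhat(\bs)$ lies on a proper face of $N(\bs)$.

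The crux of the argument, which I expect to be the main obstacle, is showing that the non-minimizing contributing critical points on $\overline{B}$ contribute zero net signed count to the identity above. The cleanest route I anticipate is induction on the codimension of the stratum $\mS(\bs)$, pairing contributions across individual hyperplanes by the sign-flipping surgery that sends $B$ to its mirror across one $\ell_j=0$; alternatively, one may reinterpret the signed count as the attachment multiplicity of $\torus_\bs$ for a (modified) downward gradient flow of $h_\rhat$ in stratified Morse theory, where the simple, generic hypotheses combined with strict convexity on orthants force each multiplicity to be $0$ or $+1$.
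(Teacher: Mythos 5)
Your reduction of the theorem to a combinatorial identity in the imaginary fiber basis is correct and is genuinely dual to the paper's approach. The paper expresses everything in the basis $\{\torus_\bs\}$, introduces linking constants $\nu_{\bs,\bs'}$ via an upper-triangular change of basis (Proposition~\ref{pr:torusBase}), proves a key stability lemma about how $\nu_{\cdot,\bs_*}$ varies across hyperplanes not through $\bs_*$ (Lemma~\ref{lem:cross}), and then shows $\nu_{\sigma(B_\mO),\bs_*}\in\{0,1\}$ with value $1$ exactly when $\bs_*$ is contributing. You instead match coefficients of $\CC_B$ directly and reduce to showing, for each bounded $B$,
$$\sum_{\bs\in\contrib,\;\bs\in\overline B}\sgn_\bs(B)\;=\;\mathbf 1[\,B\in\adj(\bzer)\,],$$
which is a valid equivalent reformulation since the bounded-component fibers form a $\Z$-basis (Remark~\ref{rem:unbounded}). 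Your step recovering Proposition~\ref{pr:mT} is also fine.

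However, there is a genuine gap, and part of your geometric justification is actually false as stated. You argue that $\sigma(B)$ is contributing precisely when $\sgn_{\sigma(B)}(B)=+1$ (correct, by expanding $-\grad h$ in the $\bb^{(j)}$ and invoking linear independence), and then assert that this sign condition is equivalent to $B$ accumulating at the origin. It is not: $\sgn_{\sigma(B)}(B)=+1$ only says $B$ is on the origin side of the hyperplanes \emph{through} $\sigma(B)$, whereas $B\in\adj(\bzer)$ requires being on the origin side of \emph{all} hyperplanes. In the running Example~\ref{ex:main} with $\rr=(1,1)$, the bounded component $B'$ with $\ell_1<0<\ell_2$ has $\sigma(B')=\bs_2$, $\sgn_{\bs_2}(B')=+1$, and $\bs_2\in\contrib$, yet $B'\notin\adj(\bzer)$; the identity is only rescued there because $\bs_{1,2}\in\overline{B'}\cap\contrib$ contributes $\sgn_{\bs_{1,2}}(B')=-1$. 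This is exactly the cancellation of non-minimizers you flag as ``the crux''---but then you leave it unproved, offering two alternative strategies without executing either. As written, the proof is incomplete: the paper's Lemma~\ref{lem:cross} plus Proposition~\ref{prop:contribsing} is precisely the machinery needed to control this cancellation, and some equivalent ingredient is required. I would suggest either dualizing Lemma~\ref{lem:cross} into a statement about your signed count, or abandoning the fiber-basis matching and following the paper's $\nu$-based argument.
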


Theorem~\ref{thm:topol} gives a determination of the coefficients
$\{ k_{\bs} \}$ in Equation~\eqref{eq:homological}, namely $k_{\bs} = 1$
when $\bs \in \contrib$ and zero otherwise.  Is this 
expected or surprising?  If the torus $\mT$ may be expanded
up to the point $\bs$ while remaining in $\mT$, then the
condition $\bs \notin \contrib$ is precisely the condition
needed to extend the deformation to bypass $\bs$ and show
that $\mT \doteq C$ for some $C$ in $\M_{< h(\bs)}$.  
Theorem~\ref{thm:topol} may be understood to say 
that the converse holds and, moreover, it holds even 
if one has to pass through $\sing$ at various points 
en route to $\bs$.

One might conjecture that Theorem~\ref{thm:topol} should hold 
for general rational functions, but this is false.  A counterexample is given
in~\cite{BMP-lacuna}, where for certain critical points $\bs$ 
the coefficient $k_{\bs}$ may be seen not only 
to be nonzero but to take on the surprising value of~3.
In other words, when $\mT$ passes through $\sing$ en route to $\bs$,
it twists so as to hit $\sing$ near $\bs$ in an unpredictable manner.  
Hyperplane arrangements are special in that they do not 
allow these kinds of shenanigans.  We will give more intuition 
as to why when we discuss the relation between real hyperplane
arrangements and their complexifications in 
Section~\ref{ss:ProofofContribSum}.

To complete the analysis for simple arrangements in generic directions,
saddle point integration results are quoted from~\cite{PW-book},
yielding in Theorem~\ref{thm:mainASM} from Section~\ref{ss:integrals}
an asymptotic expansion of $[\bz^{n\br}]F(\bz)$ 
in terms of effective constants which 
can be determined an automatic manner.  In many cases the leading 
asymptotic term is given explicitly.
Algorithm~\ref{alg:1} summarizes the steps 
needed to automatically produce these formulae.  Details on our Maple 
implementation of Algorithm~\ref{alg:1} and additional examples
are given in Section~\ref{sec:implement}.  The main wrinkle
in determining dominant asymptotics from the explicit formulae is the
possibility that leading constants in Theorem~\ref{thm:mainASM} may vanish
due to cancellation, which is discussed in Section~\ref{ss:vanish}.

\begin{algorithm}
\DontPrintSemicolon
\KwIn{Rational function $F(\bz) = G(\bz)/\prod_{j=1}^m\ell_j(\bz)^{p_j}$ 
with $\ell_j(\bz) = 1 - \left(\bb^{(j)},\bz\right)$ linearly independent \\ 
\hspace{0.45in} Generic direction $\br = (r_1,\dots,r_d)$}
\KwOut{Asymptotic expansion of $[z^{n\br}]F(\bz)$ as 
$n \rightarrow \infty$, uniform in compact set around $\br$}
\;
\If{matrix with rows $\bb^{(j)}$ not full rank 
for a collection with common solution}{
	return \textsc{fail}, ``not a simple arrangement''
}
get the flats defined by the $\ell_j(\bz)$\\
find the (unique) critical point in each orthant for each flat by solving 
Equation~\eqref{eq:critpoints} \\
sort the saddles by height function $h_{\br}$ and set $\bs$ equal to the 
highest critical point \\
\While{no contributing points have been found}{
	compute the cone $N(\bs)$ \\
	\If{$(- \nabla h)(\bs)$ on boundary of $N(\bs)$}{
		return \textsc{fail}, ``$\br$ not a generic direction''
	}
	\If{$(- \nabla h)(\bs) \notin N(\bs)$}{
		set $\bs$ to next lowest height critical point and repeat 
loop
	}
	Compute the ideal $\mJ(\bs)$ in Equation~\eqref{eq:Jideal} \\
	\uIf{ $G \in \mJ(\bs)$}{
		set $\bs$ to next lowest height critical point and repeat 
loop} 
	\Else{
		$\bs$ is a contributing singularity \\
		repeat above steps to identify contributing singularities of 
the same height as $\bs$}
}
calculate asymptotic $\Phi_{\bs}$ at each contributing singularity 
$\bs$ of highest height using Theorem~\ref{thm:mainASM} \\
\textbf{return} sum of the $\Phi_{\bs}$ from the last step
\caption{{\sf SimpleHyperplaneAsymptotics}}
\label{alg:1}
\end{algorithm}

\subsubsection*{Non-simple arrangements}

Section~\ref{sec:nonsimple-generic} deals with non-simple $F(\bz)$ in 
generic directions $\br$ using a partial fraction
decomposition.  When $F(\zz) = G(\zz) / \prod_{j=1}^m \ell_j (\zz)^{p_j}$,
with linear dependencies among the normals $\bb^{(j)}$ to $\ell_j$, 
it is possible to rewrite $F$ as $\sum_{i=1}^s F_i$ where each $F_i$
is of this same form except that the vectors $\bb^{(j)}$ are linearly 
independent.  This is a known fact from the theory of matroids:
the {\bf broken circuit} complex defined in~\cite{brylawski-NBC} leads 
to the canonical {\bf no broken circuit} basis~\cite{OrlikTerao1992} 
for the matroid $\{ \bb^{(j)} : 1 \leq j \leq m \}$,
which may be harnessed for easy computation of the partial fraction
decomposition $F = \sum_{i=1}^s F_i$.  The main result of
Section~\ref{sec:nonsimple-generic} is the following
algorithm.

\begin{algorithm}[!ht]
\DontPrintSemicolon
\KwIn{Rational function $F(\bz) = G(\bz)/\prod_{j=1}^m\ell_j(\bz)^{p_j}$}
\KwOut{Collection $F_1(\bz),\dots,F_r(\bz)$ of rational functions with 
$\sup(F_j) \subset \sup(F)$ such that $F = F_1 + \cdots + F_r$ and each $F_j$ 
has independent support}
\;
$S \gets F(\bz)$ \\
\While{there exists a summand $\tilde{F}$ of $S$ with broken circuit 
$\{i_1,\dots,i_s\}$}{
	apply Equation~\eqref{eq:linDependence} with $i_j>i_s$ such that 
$\{i_1,\dots,i_s,i_j\}$ is dependent \\
	add the result to $S - \tilde{F}$
}
collect and simplify summands in $S$ with the same denominator \\
\While{there exists summand $\tilde{F}$ of $S$ with numerator in the 
ideal generated by $\sup(\tilde{F})$}{
		pick among all such $\tilde{F}$ one with maximal total degree 
in its denominator \\
		write the numerator as a polynomial combination of the 
elements in its denominator \\
		expand this new fraction and simplify, removing terms in the 
denominator \\
		collect and simplify summands with the same denominator
}
\caption{{\sf SimpleDecomp}}
\label{alg:SimpleDecomp}
\end{algorithm}

\subsubsection*{Non-generic directions}

Section~\ref{sec:simple-nongeneric} deals with non-generic directions 
$\br$ which, beyond some two-dimensional work of Lladser~\cite{lladser2006}, 
has not received much previous attention in the literature.  Due to the results 
of Section~\ref{sec:nonsimple-generic}, when addressing non-generic
directions one may assume without loss of generality that $F$ is 
simple.  

Nongeneric directions arise when the domains of dominant singularities
from different strata collide.  If $\mS$ is a stratum of codimension $k$,
then each $\bs \in \mS$ lies in the set $\crit (\rr)$ for a set of vectors
$\rr$ of dimension $k$ (projective dimension $k-1$).  As $\bs$ varies
over $\mS$, these sets foliate an open cone $K$ in $\R^d$. Typically, 
as $\rr$ passes through $\partial K$ there will
be a point $\bs (\rr) \in \crit (\rr)$ such that $\bs(\rr) \in \contrib$
on one side of $\partial K$ while $\bs(\rr) \notin \contrib$ 
on the other side.  Precisely when $\rr \in \partial K$, 
it is not defined whether the critical point $\bs (\rr)$ is in
$\contrib$ because this is not defined for non-generic directions
(as mentioned above, $- \grad h_{\rhat} (\bs)$ lies on the boundary of the 
positive cone $N(\bs)$).  

The word \emph{typically} above refers to the fact that these cones
themselves are curved versions of central hyperplane arrangements.
Their boundaries are mostly smooth except when two or more bounding 
surfaces intersect, corresponding to the stratum of $\bs$ dropping
in dimension.  Our most explicit results are limited to
the most common case, when there is a drop of~1 in dimension.  In this sense
there is still more work to be done to obtain a complete understanding of
non-generic directions.

We give two types of result. The first is the behavior precisely 
along a non-generic direction when the critical points of two 
strata, one of which is precisely one dimension less than the other, 
overlap. In this setting, we consider
$a_{n \rhat}$ as $n \to \infty$ when $\rhat$ is a fixed non-generic
rational direction; our results may be extended to $\rr = n \rhat + O(1)$,
though we do not pursue that here.  
Proposition~\ref{prop:nonGenErf} in Section~\ref{sec:simple-nongeneric}
gives the leading asymptotic behavior in terms of certain negative 
moments of Gaussian distributions, computed in Proposition~\ref{prop:1DNegGauss}. 
The second type of result concerns the behavior of $a_\rr$ as
$\rr$ crosses $\partial K$.  Patching from one formula to the
other occurs in a scaling window of width $\sqrt{|\rr|}$, with
our most explicit result occurs when all divisors have power~1.

\subsection{Applications}

Before going into details we list a few applications of our results. 

\begin{example}[\protect{Example~\ref{eg:queuing} continued}]
\label{eg:queuing2}
The simplest case of~\eqref{eq:queuing} is the two-server case, 
where~\eqref{eq:queuing} becomes 
\begin{equation} \label{eq:two servers}
F(x,y) = \frac{e^{x+y}}{(1-\rho_{11}x-\rho_{12}y)(1-\rho_{21}x-\rho_{22}y)} 
\end{equation}
By scaling the $x$ and $y$ variables, asymptotics in all cases
of interest follow from a study of
$$H(x,y) = \frac{e^{x/\rho_{21}+y/\rho_{22}}}{(1-ax-by)(1-x-y)}$$
where $a=\rho_{11}/\rho_{21}>0$ and $b=\rho_{12}/\rho_{22}>0$.
By symmetry we may assume $a\geq b$; if $a=b$ then $H$ is 
not simple, so we postpone this case until the end of the example.
The asymptotic expansion in Theorem~\ref{thm:mainASM} gives 
coefficient asymptotics for $H$, as follows.

Parametrize directions by taking $\rr = (r,1)$ with $r>0$. 
The flat defined by $1-ax-by=0$ has the critical point
\[ c_{1} = \left(\frac{r}{(r+1)a},\frac{1}{(r+1)b}\right), \]
which is always contributing. Similarly, the flat defined by 
$1-x-y=0$ has the critical point
\[ c_{2} = \left(\frac{r}{r+1},\frac{1}{r+1}\right), \]
which is always contributing. The flat defined by 
$1-ax-by=1-x-y=0$ contains a single point
\[ c_{12} = \left( \frac{1-b}{a-b},\frac{a-1}{a-b} \right), \]
which is always a critical point. The point $c_{12}$ is contributing, and
$(r,1)$ is a generic direction, if and only if there exist $A_1,A_2>0$
such that
\[ (-\nabla h)(c_{12}) =  \left( \frac{r(a-b)}{1-b},\frac{a-b}{a-1} \right) = A_1(a,b) + A_2(1,1); \]
since we assume $a \geq b$, this occurs precisely when $0<b<1<a$ and
\[ \frac{1-b}{a-1} < r < \frac{(b-a)a}{(a-1)b}. \]
The points $c_1,c_2,$ and $c_{12}$ have
corresponding heights
\begin{align*} 
h_1 &= -r\log\left(\frac{r}{r+1}\right) - \log\left(\frac{1}{r+1}\right)+\log(a^rb) \\
h_2 &= -r\log\left(\frac{r}{r+1}\right) - \log\left(\frac{1}{r+1}\right) \\
h_{12} &= -r\log\left|\frac{1-b}{a-b}\right| - \log\left|\frac{a-1}{a-b}\right|.
\end{align*}
The theory developed below shows that $h_{12}$ is the highest height contributing
point whenever $c_{12}$ is contributing. Otherwise, $h_1$ has the highest height 
among the contributing points if and only if $a^rb>1$. Theorem~\ref{thm:mainASM}
determines asymptotics for each of the
three different asymptotic regimes:  if $0<b<1<a$ and 
$\frac{1-b}{a-1} < r < \frac{(b-a)a}{(a-1)b}$ then
\[ [x^{rn}y^n]H(x,y) \sim  \left(\frac{a-b}{1-b}\right)^{rn}\left(\frac{a-b}{a-1}\right)^n 
\frac{\exp\left[\frac{(1-b)\rho_{22}+(a-1)\rho_{21}}{(a-b)\rho_{21}\rho_{22}}\right]
(1-b)(a-1)}{a-b}.\]
If those conditions do not hold and $a^rb>1$ then 
\[ [x^{rn}y^n]H(x,y) \sim  \left(a+\frac{a}{r}\right)^{rn}\left(b+br\right)^n 
\frac{\exp\left[\frac{a\rho_{21}+rb\rho_{22}}{ab(r+1)\rho_{21}\rho_{22}}\right]
ab(r+1)^{3/2}}{((a-1)rb+a(b-1))\sqrt{2n\pi r}} ,\]
while if those conditions do not hold and $a^rb<1$ then
\[ [x^{rn}y^n]H(x,y) \sim  \left(1+\frac{1}{r}\right)^{rn}\left(1+r\right)^n 
\frac{\exp\left[\frac{\rho_{21}+r\rho_{22}}{(r+1)\rho_{21}\rho_{22}}\right]
(r+1)^{3/2}}{(1-b+r-ar)\sqrt{2n\pi r}} .\]
If $a=b\neq1$ then we are in the non-simple case, and 
Algorithm~\ref{alg:SimpleDecomp} provides the decomposition
\[ H(x,y) = \frac{e^{x/\rho_{21}+y/\rho_{22}}}{(1-ax-ay)(1-x-y)} 
= \frac{a}{a-1}\cdot\frac{e^{x/\rho_{21}+y/\rho_{22}}}{1-ax-ay}-
\frac{1}{a-1}\cdot\frac{e^{x/\rho_{21}+y/\rho_{22}}}{1-x-y}. \]
Thus, as coefficient extraction distributes over a sum,
{\small
\[ [x^{rn}y^n]H(x,y) \sim 
\left(a+\frac{a}{r}\right)^{rn}\left(a+ar\right)^n 
\frac{\exp\left[\frac{\rho_{21}+r\rho_{22}}{a(r+1)\rho_{21}\rho_{22}}\right]
a\sqrt{r+1}}{(a-1)\sqrt{2n\pi r}}
+
\left(1+\frac{1}{r}\right)^{rn}\left(1+r\right)^n 
\frac{\exp\left[\frac{\rho_{21}+r\rho_{22}}{(r+1)\rho_{21}\rho_{22}}\right]
\sqrt{r+1}}{(a-1)\sqrt{2n\pi r}}.
\]
}

\noindent
Finally, if $a=b=1$ then $H(x,y)=e^{x/\rho_{21}+y/\rho_{22}}/(1-x-y)^2$
is simple but has a second order pole. Theorem~\ref{thm:mainASM} then implies
\[ 
[x^{rn}y^n]H(x,y) \sim \left(1+\frac{1}{r}\right)^{rn}\left(1+r\right)^n
\sqrt{n} \; \frac{\exp\left[\frac{\rho_{21}+r\rho_{22}}
{(r+1)\rho_{21}\rho_{22}}\right](r+1)^{3/2}}{\sqrt{2\pi r}}.
\]
\end{example}

\begin{example}[{Pemantle and Wilson~\cite[Ex.~12.1.3]{PW-book}}]
\label{eg:probs}
Fix $r,s \in \N$. A sequence of $r+s$ independent biased coin 
flips occurs, with the first $n$ flips coming up heads with probability 
$2/3$ and the remaining $r+s-n$ flips coming up heads with probability 
$1/3$. A player wants to see $r$ heads and $s$ tails, and may pick $n$. 
On average, how many choices of $n$ will be winning choices?

By independence of the coin flips, the probability that $n$ is a winning 
choice equals
\[ p_{r,s}(n) = \sum_{0 \leq a \leq n} \binom{n}{a}(2/3)^a(1/3)^{n-
a}\binom{r+s-n}{r-a}(1/3)^{r-a}(2/3)^{s-(n-a)}, \]
so $f_{r,s}=\sum_{n \geq 0}p_{r,s}(n)$ counts the expected number of 
winning values of $n$. It follows that the bivariate generating function 
$F(x,y)=\sum_{r,s \geq 0}f_{r,s}x^ry^s$ is a product of rational 
functions with linear denominators,
\[ F(x,y) = \frac{1}{(1-2x/3-y/3)(1-x/3-2y/3)}. \]
Theorem~\ref{thm:mainASM} then immediately implies that,
for any $\epsilon>0$,
$$ f_{r,s} \sim 
\begin{cases}
   \left(\frac{2(r+s)}{3s}\right)^{sn}\left(\frac{r+s}{3r}\right)^{rn} \, 
   n^{-1/2} \, \frac{(r+s)^{3/2}\sqrt{2}}{\sqrt{rs\pi}(s-2r)} &: r<s/2-
   \epsilon \\[+3mm]
   \hfill 3 &: s/2+\epsilon<r<2s-\epsilon \\[+3mm]
   \left(\frac{2(r+s)}{3r}\right)^{rn}\left(\frac{r+s}{3s}\right)^{sn} \, 
   n^{-1/2} \, \frac{(r+s)^{3/2}\sqrt{2}}{\sqrt{rs\pi}(r-2s)} &: 
   2s+\epsilon < r
\end{cases} $$
as $(r,s)\rightarrow\infty$.
In particular, the expected number of winning values of $n$ exponentially 
decreases in the first and last cases, but approaches the constant 3 when 
$s/2+\epsilon<r<2s-\epsilon$.  Moreover, in this case the leading term 
is not an asymptotic series but the single term~3, as discussed in
Remark~\ref{rem:highpoles} (see also~\cite{Pemantle2000}).  It follows 
that when $(r,s) \to \infty$ with $s/2 + \epsilon < r <2s - \epsilon$, 
the error term $|3 - f_{r,s}|$ decreases exponentially in $s$.

This work allows, for the first time, detailed information at 
slopes $1/2$ and~2 and how behaviour transitions around such directions. 
Proposition~\ref{prop:nonGenDir} implies that both 
$f_{2s,s}$ and $f_{s,2s}$ approach $3/2$ as $s \to \infty$.
More precisely, Proposition~\ref{prop:nonGenErf} implies that
the transition, in which dominant asymptotics go from the constant
3 through the constant $3/2$ to end up exponentially small,
occurs in a scaling window of width $s^{1/2}$:
$$f_{2s + t \sqrt{s} , s} \sim \frac{3}{2} \left(\erf(t)+1\right),$$
where $\erf (z) = \frac{2}{\pi} \int_{0}^z e^{-u^2/2} \, du$
is the Gaussian error function. 
\end{example}

\section{Simple Arrangements in Generic Directions}
\label{sec:simplegeneric}

Unless otherwise stated, in this section we assume that
$$ F(\bz) = \frac{G(\bz)}{\prod_{j=1}^m \ell_j^{p_j}(\bz)} 
\mbox{ is simple and } \rhat \mbox{ is generic.} \eqno{(H1)}$$
We begin by describing an iterative \emph{slide and replace} approach 
to finding critical points which affect dominant asymptotics. 
This iterative algorithm, while correctly determining asymptotics, 
becomes unnecessary once we prove Theorem~\ref{thm:topol}. 
Our topological results allow one to shortcut the slide and
replace algorithm, since we know a priori that the 
easily determined contributing points will be those critical points
selected by the algorithm.

\subsection{Slide and replace}

Let $\mB$ denote the collection of \emph{bounded} components of
$\mMR$.  For $\bs \in \crit$ denote by 
$\adj (\bs) \subseteq \mB$ the collection $\{ B : \bs \in 
\overline{B} \}$ of bounded components of $\mMR$ adjacent 
to $\bs$.  For $B \in \mB$, we recall that the 
imaginary fiber $\CC_B$ is well defined in $H_d (\M , -\infty)$ 
and independent of the particular basepoint $\xx \in B$.
Let $\bU$ denote the set unbounded components of $\mMR$, so 
$\mC_B\doteq0$ for any $B \in \bU$ by Remark~\ref{rem:unbounded}.
In any orthant the height function $h_{\rhat}$ is strictly convex,
so its minimizer on any bounded closed convex set is unique and occurs
on the boundary of the set.

\begin{proposition} \label{pr:critComp}
For $B \in \mB$, let $\bx_B$ denote the unique $\bx \in \overline{B}$
on which $h$ is minimized.  Then $B \mapsto \bx_B$ defines a one-to-one
correspondence between $\mB$ and $\crit$.
\end{proposition}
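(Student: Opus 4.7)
The plan is to construct the inverse map $\bx \mapsto B(\bx)$ from $\crit$ to $\mB$ using Lagrange multipliers, then verify that the two maps are mutual inverses. First I would establish that $\bx_B$ is well-defined: each bounded component $B \in \mB$ lies inside a single open orthant (since $\mMR$ is cut out of $\R_*^d$), and within an orthant $B$ is the intersection of finitely many open half-spaces, hence a bounded open convex polytope. The height function $h$ is strictly convex on the ambient orthant (each summand $-\hat{r}_j \log|z_j|$ being strictly convex there), so $h|_{\overline{B}}$ attains its infimum at a unique point $\bx_B$. Since $-\nabla h = (\hat{r}_1/z_1,\dots,\hat{r}_d/z_d)$ never vanishes on $\R_*^d$, the minimizer $\bx_B$ must lie on $\partial B$.

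Next, to show $\bx_B \in \crit$, I would let $S = \{k \in [m] : \ell_k(\bx_B) = 0\}$ be the set of active constraints and let $\varepsilon_k \in \{\pm 1\}$ record the side of the hyperplane $\{\ell_k = 0\}$ on which $B$ lies. Applied to the minimization of $h$ on $\overline{B}$ at the relative interior point $\bx_B$ of its face $\sing_S \cap \overline{B}$, the KKT conditions give a representation
\[ -\nabla h(\bx_B) \;=\; \sum_{k \in S} \mu_k\,\varepsilon_k\,\bb^{(k)}, \qquad \mu_k \geq 0, \]
so $-\nabla h(\bx_B)$ lies in the span of $\{\bb^{(k)}\}_{k \in S}$, and therefore $\bx_B$ is a critical point of $h$ restricted to the flat $\sing_S$.

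For the inverse map, given $\bx \in \crit$ lying on the stratum $\mS_S$ with $S = \{k_1,\dots,k_s\}$, simplicity of $\A$ makes the representation $-\nabla h(\bx) = \sum_j \lambda_j \bb^{(k_j)}$ unique, and genericity of $\rhat$ forces every $\lambda_j \neq 0$ (else $\bx$ would also be critical for the subflat obtained by dropping $\ell_{k_j}$, violating genericity). The sign vector $(\sgn(\lambda_j))_{j=1}^s$ singles out exactly one of the $2^s$ local chambers cut at $\bx$ by the hyperplanes through it; extending this local chamber to a global connected component $B(\bx)$ of $\mMR$ and running the KKT argument in reverse shows that $\bx$ is a local, hence by strict convexity global, minimum of $h$ on $\overline{B(\bx)}$. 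By Remark~\ref{rem:unbounded}, $h$ is unbounded from below on any unbounded component, so $B(\bx) \in \mB$. The two assignments are mutual inverses: $\bx = \bx_{B(\bx)}$ by uniqueness of the minimizer, while $B = B(\bx_B)$ because the identification $\lambda_k = \mu_k \varepsilon_k$ with $\mu_k > 0$ gives $\sgn(\lambda_k) = \varepsilon_k$, and the signs $\varepsilon_k$ determine $B$ uniquely among the local chambers at $\bx_B$.

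The main obstacle lies in the inverse direction, namely showing that the chamber selected by the sign pattern $(\sgn(\lambda_j))$ is globally bounded rather than unbounded; this is exactly where Remark~\ref{rem:unbounded} is essential, since without the unboundedness of $h$ on unbounded components, critical points could correspond to minimizers on unbounded cells and spoil the bijection.
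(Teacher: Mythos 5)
Your proof is correct and takes essentially the same approach as the paper's: both show that $\bx_B$ is critical by examining the active constraints at the minimizer, and both construct the inverse by selecting, at each critical point, the unique local chamber into which $h$ increases (using strict convexity of $h$, genericity of $\rhat$, and Remark~\ref{rem:unbounded} to rule out unbounded components). You phrase the argument in the language of KKT multipliers and the sign pattern $(\sgn\lambda_j)$, whereas the paper reasons directly about directional derivatives of $h$ and separately checks that the other chambers adjacent to a critical point fail to minimize $h$ there, but these are surface-level differences in presentation rather than in substance.
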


\begin{proof}
The minimizer $\bx_B$ lies in $\partial \overline{B}$ and thus
on some stratum $\mS = \mS_L$.
The intersection $\mS \cap \overline{B}$ is a neighbourhood of $\bx$
in $\mS$, therefore $\bx$ is a local minimum for $h$ on $\mS$ and thus
$\bx$ is the critical point for the stratum $\mS$.  
We have shown that the range of the map $B \mapsto \bx_B$ 
is contained in $\crit$, and it remains to show that
the map is a bijection.

Pick any $\bs \in \crit$ and let $\mS = \mS_L$ be the stratum
containing $\bs$.  Writing the flat $\sing_L$ as the common zero set
of $\ell_1, \ldots  \ell_k$, the $k$ hyperplanes 
defined by the $\ell_j$ divide $\R^d$
into $2^k$ regions each containing a unique (possibly unbounded)
component $B$ of $\mMR$ whose closure contains $\bs$.  If $k=1$
then $\bb^{(1)}$ is a non-zero multiple of $\nabla h (\bs)$, and
$\bs$ is a local minimum for $h$ precisely on the closure of the
component $B \in \adj(\bs)$ closer to the origin. When $k>1$ none of
these $k$ hyperplanes are orthogonal to $\nabla h (\bs)$, by our assumption
of genericity of $\rr$. Therefore
there is a unique component $B(\bs)$ of $\mMR$ such that,
in a neighbourhood of $\bs$, if $\bx$ is on the boundary of
$B(\bs)$ and $\bv$ is a vector from $\bx$ pointing into $B(\bs)$
then $h$ increases in the direction $\bv$.  Because the point $\bs$
is a minimum for $h$ on $\mS$, it follows that $\bs$ is a local
minimum for $h$ on $B(\bs)$, hence by convexity the global minimum
for $h$ on $B(\bs)$.  Thus, we have $\bx_{B(\bs)} = \bs$.

On the other hand, for any other component $B' \neq B$ adjacent to $\bs$
there is a point $\bx$ on the boundary of $B'$, which can be taken arbitrarily
close to $\bs$, and a vector $\bv$ pointing into $B'$
such that $h$ decreases at a linear rate along $\bv$.  Because
$\bs$ is a critical point $h(\bx) = h(\bs) + O(|\bx-\bs|^2)$ on $\sing_L$, hence
$h$ cannot be minimized on $\overline{B'}$ at $\bs$.  It follows
that the choice $B(\bs)$ is unique, finishing the proof.
\end{proof}

Given $\xx \in B$ for a bounded component $B\in \mB$, 
the \emph{slide and replace} operation 
represents the fiber $\CC_\xx$ in terms of linking 
tori and lower fibers.  The slide consists of moving $\xx$ near to 
the critical point $\bs := \xx_B$ on the boundary of $B$ at which 
$h$ is minimized.  The replacement consists of replacing $\CC_\xx$ 
by $\torus_{\bs}$ plus a sum of fibers $\CC_\yy$ over points $\yy$
in components $B'$ with $h(\xx_{B'}) < h(\xx_B)$.  We illustrate this
in an example.  

\begin{example}[slide and replace] \label{ex:slide}
The left side of Figure~\ref{fig:slide}
illustrates the slide and replace operation for 
our running Example~\ref{ex:main} when 
$\hat{r}_1 > 2/3$. Starting with a basepoint $\xx$ near 
the origin in the first quadrant, we slide
$\xx$ near the critical point $\bs_1$ in the flat $\sing_1$.
Ideally we would like to replace
$\CC_\xx$ by $\torus_{\bs_1}$ but these chains are not equal. Instead,
the linking torus equals $\CC_\xx - \CC_\yy$, where $\yy$ is in the component 
$B'$ lying just to the right of $\bs_1$, and  
we replace $\CC_\xx$ by $\torus_{\bs} + \CC_\yy$.  
Repeating the slide and replace operation on $\CC_\yy$
yields $\CC_\yy = \torus_{\bs_2} + \CC_\zz$, where $\bs_2$ is
the critical point on the next line segment to the right and $\zz$ is 
in the unbounded component above $\bs_2$.  Because $\CC_\zz \doteq 0$
all fibers have been replaced by tori, so the algorithm is complete and
returns the identity
$$\CC_\xx \doteq \torus_{\bs_1} + \torus_{\bs_2} \, .$$

\begin{figure}
\centering
\includegraphics[width=2.5in]{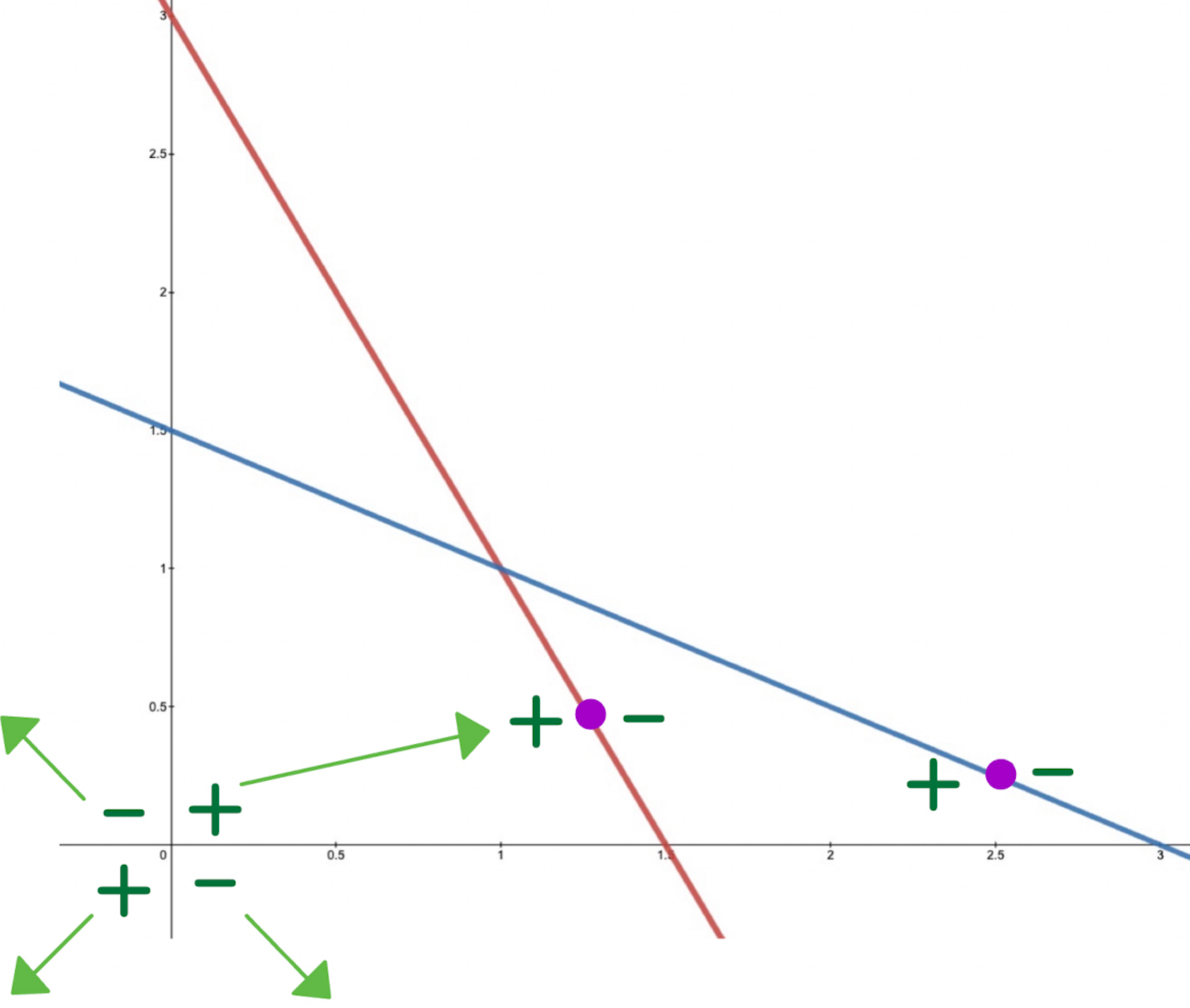}
\hspace{0.3in}
\includegraphics[width=2.5in]{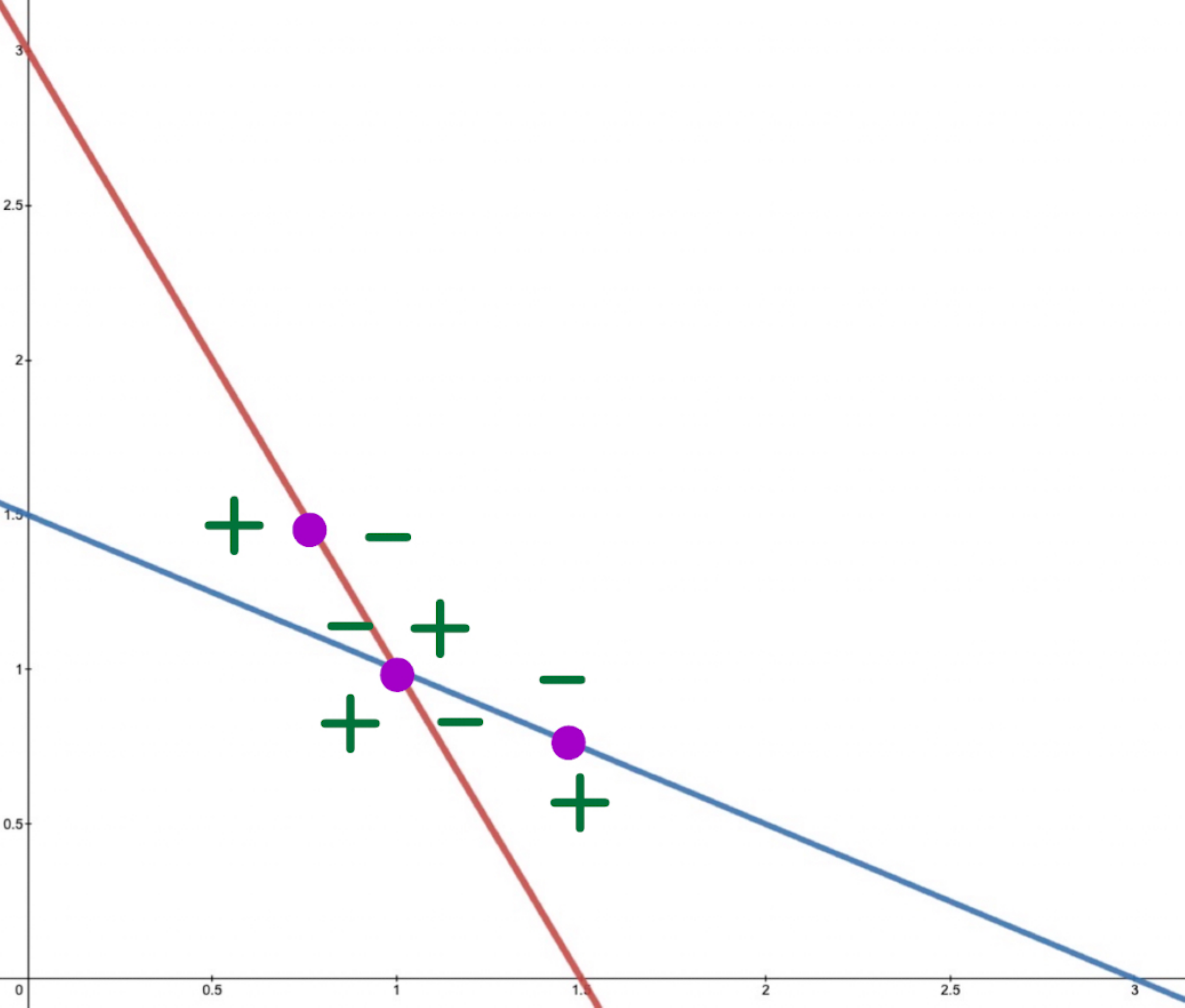} 
\caption{\emph{Left:} When $\rr=(5,1)$ we start with 
the point $\xx$ near the origin,
bring $\xx$ to the critical point $\bs_1$, then add and subtract
imaginary fibers to ultimately obtain $\CC_\xx$ as a sum of linking tori.
Note that $\CC_{\xx'}\doteq0$ for $\xx'$ in any component of $\M_\R$
whose closure contains the origin but not $\xx$. \emph{Right:} When 
$\rr=(1,1)$ we apply the slide and replace algorithm to represent $\CC_\xx$
as a sum of the linking tori seen in Figure~\ref{fig:linking}. Note that
the signs in all bounded components not containing $\CC_\xx$ cancel in
both cases.}
\label{fig:slide}
\end{figure}

The right side of Figure~\ref{fig:slide} depicts the result of running the
same algorithm
$\hat{r}_1 \in (1/3 , 2/3)$.  This time $\xx$ slides near
$\bs_{1,2} = (1,1)$ and replacing $\CC_\xx$ by
$\torus_{\bs_{1,2}}$ generates three additional imaginary fibers,
$$\CC_\xx = \torus_{\bs} + \CC_{\bb} - \CC_{\cc} + \CC_{\dd} $$
with $\bb,\cc,$ and $\dd$ as in Figure~\ref{fig:linking} above.
The points $\bb$ and $\dd$ slide to the respective critical points
$\bs_1$ and $\bs_2$, while the point $\cc$ slides to infinity.
Ultimately, we obtain the same relation
$$\CC_\xx \doteq \torus_{\bs_1} + \torus_{\bs_2} + \torus_{\bs_{1,2}}$$
observed above.
\end{example}

Formalizing the slide and replace algorithm leads to the following
two results.  By the previous proposition we may index the cycles
$\{ \CC_B : B \in \mB \}$ by $\crit$, denoting
$$\CC_{\bs} := \CC_{B(\bs)} \, .$$

\begin{proposition} \label{pr:Cspans}
The spans of $\{ \mC_{\bs} : \bs \in \crit \}$ and
$\{ \torus_{\bs} : \bs \in \crit \}$ as $\Z$-modules in
$H_d (\M , -\infty)$ are equal.
\end{proposition}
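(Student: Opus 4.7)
The plan is to prove the two inclusions separately, with the first being essentially immediate from the definitions and the second proceeding by downward induction on critical height.

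For the easier direction, that $\torus_{\bs}$ lies in the $\Z$-span of $\{\CC_{\br} : \br \in \crit\}$ for each $\bs \in \crit$, I would unpack the defining sum $\torus_{\bs} = \sum_B \sgn(B)\sgn_{\bs}(B)\,\CC_B$, where $B$ ranges over all components of $\mMR$ with $\bs \in \overline{B}$. Any unbounded $B$ in this sum contributes $\CC_B \doteq 0$ by Remark~\ref{rem:unbounded}, so the nontrivial contributions come from $B \in \mB \cap \adj(\bs)$. By Proposition~\ref{pr:critComp}, the map $B \mapsto \bx_B$ is a bijection $\mB \to \crit$, so each such $\CC_B$ equals $\CC_{\bx_B}$ for some $\bx_B \in \crit$. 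This immediately expresses $\torus_{\bs}$ as a signed sum of elements $\CC_{\br}$.

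For the reverse inclusion, I would order the critical points by decreasing height $h_{\rhat}$ and show by induction that each $\CC_{\bs}$ lies in the span of the linking tori. Isolating the term $B = B(\bs)$ in the definition of $\torus_{\bs}$ and solving gives
\[
\sgn(B(\bs))\sgn_{\bs}(B(\bs))\,\CC_{\bs} \;=\; \torus_{\bs} \;-\; \sum_{\substack{B \in \adj(\bs) \\ B \neq B(\bs)}} \sgn(B)\sgn_{\bs}(B)\,\CC_B,
\]
and the signed coefficient on the left is $\pm 1$. Unbounded $B$ again contribute zero; for the remaining bounded $B$, we have $\bx_B \in \crit$ and $\bx_B \neq \bs$ since $\bs = \bx_{B(\bs)}$ and Proposition~\ref{pr:critComp} is a bijection. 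Because $\overline{B}$ lies in a single closed orthant on which $h$ is strictly convex and $\bx_B$ is its unique minimizer on $\overline{B}$, and $\bs \in \overline{B}$ is not this minimizer, we get the strict inequality $h(\bx_B) < h(\bs)$. Thus each surviving $\CC_B = \CC_{\bx_B}$ corresponds to a critical point of strictly lower height, and the inductive hypothesis places it in the span of $\{\torus_{\br}\}$. The base case (a critical point of minimal height) is automatic, since then every other adjacent bounded component would provide a critical point of lower height, forcing the sum to be empty and giving $\CC_{\bs} = \pm\torus_{\bs}$.

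The only delicate point I anticipate is verifying the strict height drop $h(\bx_B) < h(\bs)$ without appealing to genericity or to the slide-and-replace algorithm; this is handled cleanly by the fact that $\bs$ has all coordinates nonzero (critical points lie in $\R_*^d$), so $\bs$ and $\overline{B}$ share a single closed orthant on which $h$ is strictly convex. Everything else is bookkeeping with signs and the bijection of Proposition~\ref{pr:critComp}, so no further obstacle should arise.
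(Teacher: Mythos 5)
Your proof is correct and takes the same approach as the paper: both directions follow by unpacking the definition of $\torus_{\bs}$, and the nontrivial direction uses the fact that every other bounded component adjacent to $\bs$ carries a critical point of strictly lower height. You package this as a downward induction on critical height with an explicit base case and an explicit justification of the strict height drop via unique minimization, whereas the paper phrases the same argument as an iteration that terminates at unbounded components.
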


\begin{proof}
Given $\bs \in \crit$, we use the definition of $\torus_{\bs}$ 
to write
\begin{equation} \label{eq:torus2}
\mC_{\bs} = \torus_{\bs}
   - \sgn(\bs) \sum_{B' \in \adj (\bs) \setminus \{ B(\bs) \} }
   \sgn_{\bs} (B') \mC_{B'} \, .
\end{equation}
For each $B'$ in the sum, the infimum value of $h$ on $B'$ is achieved
somewhere other than $\bs$, hence $B' = B(\bs')$ for some $\bs'$ with
$h(\bs') < h(\bs)$.  Therefore,~\eqref{eq:torus2}
writes $\mC_{\bs}$ as $\torus_{\bs}$ plus a linear combination of
classes $\mC_{\bs'}$ with $h(\bs') < h(\bs)$.  Iterating, we express
$\mC_\bs$ as an integer sum of classes $\torus_{\bs}$ (because the
iteration ends at null classes $\mC_{B'}$ where the component $B'$
is unbounded).  The converse holds since $\torus_{\bs}$ is by definition 
an integer combination of classes $\mC_B$ for $B \in \adj (\bs)$.
\end{proof}

\begin{proposition} \label{pr:torusBase}
The collection $\{ \torus_{\bs} : \bs \in \crit \}$ forms a 
module basis of $H_d (\M , -\infty)$ with coefficients in~$\Z$.  
Furthermore, the change of basis matrix from
$\{\mC_{\bs} : \bs \in \crit\}$ is upper-triangular, for any total 
order~$\prec$ that extends the partial order $h(\bs) < h(\bs')$. This means
$$\mC_{\bs} = \sum_{\bs' \in \crit} \nu_{\bs , \bs'} \, \torus_{\bs'}$$
for \Em{linking constants} $\nu_{\bs , \bs'}$, where $\nu_{\bs , \bs'}=0$
unless $\bs' \preceq \bs$.
\end{proposition}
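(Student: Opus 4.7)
The plan is to combine Remark~\ref{rem:unbounded}, Proposition~\ref{pr:critComp}, and the identity in Equation~\eqref{eq:torus2} to obtain the basis property and the upper-triangularity simultaneously.

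First, I would use Remark~\ref{rem:unbounded}, which asserts that the imaginary fibers of bounded components on which $h_{\rhat}$ is bounded below already form a $\Z$-module basis of $H_d(\M,-\infty)$, together with the bijection $\bs\leftrightarrow B(\bs)$ supplied by Proposition~\ref{pr:critComp}. Each bounded component $B$ has a unique height minimizer $\bx_B\in\crit$, and conversely each $\bs\in\crit$ arises as $\bx_{B(\bs)}$ for a unique $B(\bs)\in\mB$. Hence $\{\mC_{\bs}:\bs\in\crit\}=\{\mC_B:B\in\mB\}$ is precisely the basis described in Remark~\ref{rem:unbounded}.

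Next, I would iterate the relation from the proof of Proposition~\ref{pr:Cspans}, namely Equation~\eqref{eq:torus2}. For each $\bs\in\crit$ and each $B'\in\adj(\bs)\setminus\{B(\bs)\}$ appearing on the right-hand side, either $B'\in\bU$, in which case $\mC_{B'}\doteq 0$, or $B'=B(\bs')$ for some $\bs'\in\crit$ with $h(\bs')<h(\bs)$ strictly. The strict inequality is the one delicate point: since $\bs\in\overline{B'}$ but $\bs\neq \bx_{B'}$, and $h$ is strictly convex on the orthant containing $B'$ (as used in Proposition~\ref{pr:critComp}), the unique minimizer $\bs'=\bx_{B'}$ of $h$ on $\overline{B'}$ must satisfy $h(\bs')<h(\bs)$. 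Because $\crit$ is finite and each substitution strictly lowers the height, repeated expansion of the remaining $\mC_{B'}$ terminates, yielding
\[
\mC_{\bs}=\torus_{\bs}+\sum_{\bs'\in\crit,\;h(\bs')<h(\bs)}\nu_{\bs,\bs'}\,\torus_{\bs'}
\]
with integer coefficients $\nu_{\bs,\bs'}\in\Z$ and $\nu_{\bs,\bs}=1$.

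Finally, I would choose any total order $\prec$ on $\crit$ extending the partial order induced by $h$. Relative to this order, the matrix $(\nu_{\bs,\bs'})$ is upper-triangular with $1$'s on the diagonal, hence is a $\Z$-linear automorphism of the free module $\bigoplus_{\bs\in\crit}\Z$. Combined with step one, which shows $\{\mC_{\bs}\}$ is a $\Z$-basis of $H_d(\M,-\infty)$, this forces $\{\torus_{\bs}:\bs\in\crit\}$ to be a $\Z$-basis as well, with change-of-basis matrix exactly as asserted. The only real obstacle is securing the strict inequality $h(\bs')<h(\bs)$, which guarantees both termination of the recursion and strict upper-triangularity; everything else is a direct consequence of the machinery already assembled.
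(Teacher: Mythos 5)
Your proposal is correct and essentially reproduces the paper's own argument. The paper's proof cites Orlik--Terao directly for the fact that $\{\CC_B : B \in \mB\}$ is a $\Z$-basis and then invokes Proposition~\ref{pr:Cspans} together with the observation that $B(\bs')\in\adj(\bs)$ forces $\bs'\preceq\bs$; you instead appeal to Remark~\ref{rem:unbounded} (which records the same basis fact from a different source) and re-derive the triangular relation by iterating Equation~\eqref{eq:torus2}, which is precisely the content of the proof of Proposition~\ref{pr:Cspans}. Your explicit treatment of the strict inequality $h(\bs')<h(\bs)$ via uniqueness of the convex minimizer is the same mechanism the paper relies on, just spelled out.
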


\begin{proof}
This relies on the fact that $\{ \CC_B : B \in \mB \}$ form
a basis for $H_d (\M , -\infty)$ with coefficients in $\Z$.
This is essentially Lemma~5.1 and Theorem~5.2 of~\cite{OrlikTerao1994},
most of it having been proved in~\cite{orlik-solomon1980} and
in~\cite[Chapters~3~and~5]{OrlikTerao1992}.  The result then
follows from Proposition~\ref{pr:Cspans}, with upper triangularity
following from the fact that $\bs' \preceq \bs$ whenever 
$B(\bs') \in \adj (\bs)$.  
\end{proof}

\subsection{Proof of Theorem~\ref{thm:topol}}
\label{ss:ProofofContribSum}

Proposition~\ref{pr:torusBase} implies that the (class of the) domain of
integration $\mT$ in the Cauchy integral~\eqref{eq:intoCIF}
can be written as a finite integer linear combination 
of linking tori in $H_d (\M , -\infty)$, while
Theorem~\ref{thm:topol} states that this linear combination is simply
a sum of the linking tori corresponding to contributing points. 
We begin our proof of Theorem~\ref{thm:topol} by characterizing 
contributing points via a minimizing property of the height function.  Informally, 
$\bs \in \contrib$ if and only if $\bs = \xx_B$ when all hyperplanes
other than those containing $\bs$ are dropped from $\A$ and $B$ is the
component whose closure contains both the origin and $\bs$.

\begin{proposition}
\label{prop:contribsing}
The point $\bs$ in the flat $\mV_{k_1,\dots,k_s}$ is a contributing
singularity if and only if $\bs$ is the unique minimizer of $h_{\rhat}$ on
the convex connected set
$$ \Gamma = \left\{\bz :\ell_{k_j}(\bz) \geq 0 
   \text{ for } j=1,\dots,s \right\}
   \cap \left\{\bz : \sigma_j z_j > 0 \text{ for } j=1,\dots,d \right\} $$
consisting of the polyhedron in the orthant containing $\bs$ defined by
the $\ell_j(\bz)$ and the coordinate axes.
\end{proposition}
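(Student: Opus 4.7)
The plan is to prove the two implications separately, in each case exploiting strict convexity of $h_{\rhat}$ on the open orthant containing $\bs$ (its Hessian there is the positive definite diagonal matrix $\operatorname{diag}(\hat r_j/z_j^2)$) together with the fact that simplicity of $\mA$ forces the active constraint gradients $\{-\bb^{(k_j)}\}_{j=1}^{s}$ to be linearly independent. Write $\bv_j := \bb^{(k_j)}$, so that $\ell_{k_j}(\bz)=1-\bv_j\cdot\bz$ and $\bv_j\cdot\bs=1$; then $\bs$ is feasible in $\Gamma$ with exactly the constraints $\ell_{k_j}\geq 0$ active and the orthant constraints strictly slack.

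For the forward direction, I would start from the contributing condition $-\nabla h_{\rhat}(\bs)=\sum_j a_j\bv_j$ with each $a_j>0$ and apply the strict convex gradient inequality on the orthant: for every $\bx\in\Gamma$ with $\bx\neq\bs$,
\[
 h_{\rhat}(\bx) \;>\; h_{\rhat}(\bs) + \nabla h_{\rhat}(\bs)\cdot(\bx-\bs)
  \;=\; h_{\rhat}(\bs) + \sum_{j=1}^{s} a_j\,\ell_{k_j}(\bx)
  \;\geq\; h_{\rhat}(\bs),
\]
where the middle equality uses $\bv_j\cdot\bs=1$ and the final inequality uses $\ell_{k_j}(\bx)\geq 0$. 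This shows $\bs$ is the unique minimizer of $h_{\rhat}$ on $\Gamma$.

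For the reverse direction, simplicity supplies the linear independence constraint qualification at $\bs$, so the standard KKT theorem produces nonnegative multipliers $\mu_1,\dots,\mu_s\geq 0$ with $-\nabla h_{\rhat}(\bs)=\sum_j \mu_j \bv_j$. If every $\mu_j>0$ then $-\nabla h_{\rhat}(\bs)\in N(\bs)$ and $\bs$ is contributing, completing the argument. The main obstacle is ruling out the boundary case where some $\mu_{j_0}=0$: in that event $-\nabla h_{\rhat}(\bs)$ lies in the span of $\{\bv_j:j\neq j_0\}$, which is exactly the rank-deficiency in the matrix $M$ of~\eqref{eq:M} characterizing criticality on the larger flat $\sing_{S\setminus\{k_{j_0}\}}$. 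Since $\bs\in\sing_S\subset\sing_{S\setminus\{k_{j_0}\}}$ and simplicity guarantees $\sing_{S\setminus\{k_{j_0}\}}\neq\sing_S$, the point $\bs$ would be critical on two distinct flats, contradicting genericity of $\rhat$. Hence $\mu_{j_0}=0$ is impossible, and $\bs$ is contributing; the genericity hypothesis is therefore used in a precise and essential way to upgrade the weak KKT inequalities to strict positivities.
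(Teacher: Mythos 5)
Your proof is correct, and the forward implication coincides exactly with the paper's argument: strict convexity of $h_{\rhat}$ on the orthant plus the identity $(\nabla h)(\bs)\cdot(\ww-\bs)=\sum_j a_j\ell_{k_j}(\ww)\geq 0$. The reverse implication is where you take a mildly different route. The paper already presumes $\bs$ is a critical point of $h_{\rhat}$ on $\sing_{k_1,\dots,k_s}$ (so $-\nabla h(\bs)$ lies in the span of the normals), invokes genericity to make the coefficients $a_j$ all nonzero, and then argues the contrapositive: if some $a_j<0$, it exhibits an explicit descent ray $\bs+\epsilon(\ww-\bs)\in\Gamma$ along which $h$ strictly drops, so $\bs$ is not the minimizer. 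You instead start from the minimizing hypothesis, note that simplicity furnishes LICQ (the active constraint gradients $\bb^{(k_1)},\dots,\bb^{(k_s)}$ are independent, while the orthant constraints are slack), and invoke KKT necessity to obtain $-\nabla h(\bs)=\sum_j\mu_j\bb^{(k_j)}$ with $\mu_j\geq 0$; genericity then upgrades $\mu_j\geq 0$ to $\mu_j>0$ exactly as the paper upgrades $a_j\neq 0$. This packaging is a bit cleaner in two respects: it avoids constructing the explicit descent direction (that construction is precisely what the proof of KKT under LICQ does internally), and it derives, rather than assumes, that $\bs$ is critical on the flat, since the KKT stationarity condition already places $-\nabla h(\bs)$ in the span of the normals. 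In both treatments the genericity hypothesis plays the identical role of excluding the boundary case $-\nabla h(\bs)\in\partial N(\bs)$, where the claim would fail to be well defined.
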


\begin{proof}
In any fixed orthant, the strictly convex function $h(\bz)$ has a unique
minimizer on the convex set $\Gamma$.  Because $\bs$ is critical, and we 
are in a generic direction, we can write
$$ (-\nabla h)(\bs) =  \sum_{j=1}^s a_j \bb^{(k_j)} $$
for some $a_j \in \R_*$. If $a_j>0$ for all $j$ then
\[ (\nabla h)(\bs) \cdot (\ww - \bs) = \sum_{j=1}^s a_j\left(1-\ww \cdot
   \bb^{(k_j)}\right) \geq 0 \]
for all $\ww \in \Gamma$, so $h(\zz)$ decreases as $\zz$ moves
from $\ww$ towards $\bs$, and $\bs$ minimizes $h(\bz)$.
Conversely, if, say, $a_1<0$ and we pick $\ww$ such that
$\ell_{k_j}(\ww)=0$ for $j=2,\dots,s$ and $\ell_{k_1}(\ww)>0$, then
$(\nabla h)(\bs) \cdot (\ww - \bs) = a_1(1-\bb^{(k_1)} \cdot \ww) < 0$
and for $\epsilon>0$ sufficiently small we have $\bv = \bs +
\epsilon(\ww - \bs) \in \Gamma$ with $h(\bv) < h(\bs)$. Thus, $\bs$ is
contributing if and only if it is the unique minimizer of $h_{\rhat}$ on $\Gamma$.
\end{proof}

\begin{example}
\label{ex:main2}
Continuing Example~\ref{ex:main}, let $B$ be the bounded component of $\mMR$
whose closure contains the origin.  
The cones $N(\bs_1)$ and $N(\bs_2)$ are rays which contain 
$-(\nabla h)(\bs_1)$, meaning they are always
contributing points.  The critical point $\bs_{1,2} = (1,1)$ is more
interesting.  By definition $\bs \in \contrib$ if and only if
$\hat{r}_1 \in [1/3 , 2/3]$ and $\rhat$ is generic 
(which removes the endpoints).  For precisely
these values of $\hat{r}_1$, the point $\bs_{1,2}$ is the minimizer of
$h_{\rhat}$ on $\overline{B}$.  When $\hat{r}_1 < 1/3$ the minimizer 
of $h_{\rhat}$ on $\overline{B}$ is $\bs_2$ on the upper edge of
$\overline{B}$, while for $\hat{r} > 2/3$ the minimizer is $\bs_1$ 
on the right edge of $\overline{B}$.
\end{example}

Because we want to compute linking constants around critical points far from
the origin, we next examine how they can change when crossing over $\sing$. 
If $B$ is an unbounded component of $\mMR$ then we write
$h(\sigma (B)) := - \infty$ and $\nu_{\sigma(B),\bs_*} := 0$
for any $\bs_*\in\crit$.

\begin{lemma} \label{lem:cross}
Fix $\bs_* \in \crit$ and let $B , B'$ be two components of $\mMR$
in the same orthant as $\bs_*$.  Suppose there $B$ and $B'$ are separated
by a unique hyperplane $\sing_j$ not containing $\bs_*$ (so that
there is a unique index $j$
such that $\ell_j$ is positive on $B$ and negative on $B'$, or vice versa,
and that $\ell_j (\bs_*) \neq 0$).  Then $\nu_{\sigma (B) , \bs_*}
= \nu_{\sigma (B') , \bs_*}$.
\end{lemma}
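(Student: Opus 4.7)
I would prove this by enlarging the ambient space to $\M_j := \C^d_* \setminus \bigcup_{i \neq j} \sing_i$, in which $\sing_j$ is no longer excluded, and showing that crossing $\sing_j$ is invisible to the linking constant at $\bs_*$. Let $\iota_* : H_d(\M, -\infty) \to H_d(\M_j, -\infty)$ denote the map induced by the inclusion. The plan is to reduce the lemma to two claims: (a) $\iota_*\CC_B = \iota_*\CC_{B'}$ in $H_d(\M_j, -\infty)$, and (b) the functional $\CC \mapsto \nu_{\CC, \bs_*}$ factors through $\iota_*$.

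For (a), I would pick basepoints $\xx \in B$ and $\xx' \in B'$ in the orthant of $\bs_*$ and connect them by a smooth path $\gamma : [0,1] \to \R^d$ crossing $\sing_{j,\R}$ transversally at a single point $\bp \in \mS_{\{j\}}$ and avoiding every other component of $\sing_\R$ (possible because $B$ and $B'$ are separated within the orthant only by $\sing_j$). Since the linear forms $\ell_i$ have real coefficients, a direct computation gives $\ell_i(\gamma(t) + i\yy) = \ell_i(\gamma(t)) - i\,\bb^{(i)}\cdot\yy$, so the sweep $\Gamma := \{\gamma(t) + i\yy : t \in [0,1],\ \yy \in \R^d\}$ intersects $\sing_i$ only when $\gamma(t) \in \sing_{i,\R}$. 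This never happens for $i \neq j$ and happens for $i=j$ only along the codimension-two subset $\{\bp\} + i\{\yy : \bb^{(j)}\cdot\yy = 0\}$. Hence $\Gamma \subseteq \M_j$, and $\partial \Gamma = \CC_{B'} - \CC_B$ modulo chains of arbitrarily low height, which vanish in $H_d(\M_j,-\infty)$.

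For (b), observe that $\bs_* \notin \sing_j$ implies that $\bs_*$ has a neighborhood disjoint from $\sing_j$, so both $\bs_*$ and its local linking torus $\torus_{\bs_*}$ are unaffected by the enlargement from $\M$ to $\M_j$. Running the stratified Morse analysis underlying Proposition~\ref{pr:torusBase} inside $\M_j$, one obtains a basis of $H_d(\M_j,-\infty)$ indexed by the critical points in $\M_j$, which are exactly those $\bs' \in \crit$ with $\bs' \notin \sing_j$. Under $\iota_*$, any $\torus_{\bs'}$ with $\bs' \in \sing_j$ maps to zero: locally at $\bs'$, the torus decomposes as a product of small loops around each hyperplane containing $\bs'$, and the loop around $\sing_j$ bounds a disk once $\sing_j$ is re-admitted to the ambient space, filling in the torus. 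Consequently, expanding any class $\CC = \sum_{\bs'} \nu_{\CC,\bs'}\torus_{\bs'}$ and applying $\iota_*$ preserves the coefficient $\nu_{\CC,\bs_*}$, so $\nu_{\CC,\bs_*}$ depends only on $\iota_*\CC$.

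Combining (a) and (b) yields $\nu_{\sigma(B), \bs_*} = \nu_{\sigma(B'), \bs_*}$. The unbounded case is handled by Remark~\ref{rem:unbounded}: if $B$ is unbounded then $\CC_B \doteq 0$ already in $H_d(\M,-\infty)$, so step (a) forces $\iota_*\CC_{B'} = 0$ and hence $\nu_{\sigma(B'),\bs_*} = 0 = \nu_{\sigma(B),\bs_*}$ by (b). I expect the main obstacle to be a fully rigorous justification that $\{\torus_{\bs'} : \bs' \notin \sing_j\}$ is a basis of $H_d(\M_j,-\infty)$ and that $\iota_*\torus_{\bs'} = 0$ when $\bs' \in \sing_j$; this amounts to re-running the Morse-theoretic argument of Section~\ref{sec:simplegeneric} in the modified arrangement, where one uses the product structure of the linking torus and the fact that $\sing_j$ becomes a trivial direction once it is no longer a barrier.
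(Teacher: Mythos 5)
Your proposal takes a genuinely different route from the paper's. The paper proves the lemma by an elementary induction on heights inside $H_d(\M,-\infty)$: starting from the slide-and-replace identity~\eqref{eq:torusBase} for $\mC_B - \mC_{B'}$ (where one of $\sigma(B),\sigma(B')$ is first shown, via strict convexity, to lie on $\sing_j$), the $2^s$ adjacent components around $\sigma(B)$ are grouped into pairs across $\sing_j$ that contribute with opposite signs and have strictly lower critical height, and the claim is pushed down until both heights hit $-\infty$. Your argument instead enlarges the ambient space to $\M_j = \C^d_*\setminus\bigcup_{i\neq j}\sing_i$ and reduces everything to two structural facts: (a) $\CC_B$ and $\CC_{B'}$ become equal after inclusion $\iota_*$ into $\M_j$, via an explicit $(d+1)$-chain swept out by a path crossing $\sing_{j,\R}$; and (b) the coefficient $\nu_{\,\cdot\,,\bs_*}$ factors through $\iota_*$ because the linking tori at critical points on $\sing_j$ are killed by $\iota_*$ (the normal $S^1$ around $\sing_j$ bounds a disk in $\M_j$) while the tori at critical points off $\sing_j$ remain a basis of $H_d(\M_j,-\infty)$. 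Your approach is more conceptual — it makes transparent that the crossing is ``invisible'' to linking at $\bs_*$ precisely because $\bs_*\notin\sing_j$ — but it carries a larger technical burden: you must rerun Proposition~\ref{pr:torusBase} for the sub-arrangement and verify that critical points of the sub-arrangement are exactly those of $\crit$ off $\sing_j$ (true for generic $\rhat$ since deleting a hyperplane only removes flats, but this needs to be said) and that $\iota_*$ sends each surviving $\torus_{\bs'}$ to the corresponding sub-arrangement linking torus; you flag this gap yourself. The paper's induction is more self-contained and avoids introducing the auxiliary space at the cost of a more combinatorial bookkeeping argument. Both are correct in outline.
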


\begin{proof}
Assume, without loss of generality, that $B$ and $B'$ lie in the positive
orthant. If both components are unbounded then $\nu_{\sigma (B) , \bs_*}
= \nu_{\sigma (B') , \bs_*} = 0$, and we are done. Otherwise, one of 
$\sigma(B)$ and $\sigma(B')$ is finite and must lie on $\sing_j$:
if not then by strict convexity of $h$ the line segment between 
them intersects $\sing_j$ at a point of lower height than 
$\sigma(B)$ and $\sigma(B')$, contradicting Proposition~\ref{pr:critComp}.

We may therefore assume that $\ell_j (\sigma (B)) = 0$, 
and write
\begin{equation} \label{eq:torusBase}
\mC_B - \mC_{B'} = \torus_{\sigma (B)}
   + \sum_{B'' \in \adj (\sigma(B)) \setminus \{ B , B' \}}
   \sgn_\sigma (B'') \mC_{B''} \, .
\end{equation}
Grouping the elements of $\adj (\sigma(B))$ into pairs of components
$(\beta,\beta')$ separated only by $\sing_j$, we note that $\mC_{\beta}$
and $\mC_{\beta'}$ appear with opposite signs on the right-hand side 
of~\eqref{eq:torusBase}, and that both $\sigma(\beta)$ and $\sigma(\beta')$
have height less than $\sigma(B)$. The result then follows by induction
on the maximum heights of $\sigma(B)$ and $\sigma(B')$:
if $\sigma(B)$ and $\sigma(B')$ have height $-\infty$ then
$\mC_{B} \doteq \mC_{B} \doteq 0$, otherwise we have decomposed them as
a sum of pairs $(\beta,\beta')$ with smaller maximum height whose contributions
to~\eqref{eq:torusBase} cancel.
\end{proof}

The last preliminary step is to we express
the torus $\mT$ from the original Cauchy integral in the fiber basis.
Let $\adj(\bzer)$ denote the components of $\mMR$ adjacent to the
origin and define
$$\torus_{\bzer} := \sum_{B \in \adj(\bzer)} \sgn (\mO)\mC_{B} \, ,$$
where $\sgn (\mO)$ is~1
for the positive orthant and alternates across coordinate planes.
This was depicted by the gray fibers in Figure~\ref{fig:linking}.

\begin{pr} \label{pr:mT}
If $\mT$ is the chain of integration in~\eqref{eq:intoCIF} then
$\mT \doteq \torus_{\bzer} .$
\end{pr}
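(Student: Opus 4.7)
The plan is to establish the proposition via a coordinate-by-coordinate deformation, reducing to a one-dimensional contour identity and then taking products. The polytorus $\mT = \prod_{j=1}^d \{|z_j|=\epsilon_j\}$ is a product of small positively-oriented circles, each of which I will deform into a signed difference of imaginary fibers. In $H_1(\C^*, -\infty)$ with one-variable height $h(z)=-\hat{r}\log|z|$, $\hat{r}>0$, the positively-oriented circle satisfies $\{|z|=\epsilon\}_{\mathrm{ccw}} \doteq \mC^+_\epsilon - \mC^-_\epsilon$, where $\mC^\pm_\epsilon := \{\pm\epsilon + iy : y \in \R\}$ is oriented by increasing $y$; this follows from the standard contour argument applied to the rectangle $\{|x|\leq\epsilon, |y|\leq L\}$ with a small disk at the origin removed, whose horizontal segments have $|z|\sim L$ and hence height $-\hat{r}\log L \to -\infty$ as $L\to\infty$.

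Next, apply this identity iteratively in each coordinate $z_j$, keeping all other coordinates fixed on their current contours (small circles for coordinates not yet processed, vertical lines for those already processed). For sufficiently small $\epsilon_j$ every intermediate chain avoids $\sing$: at any point $\bz$ of such a chain all coordinates satisfy $|\Re(z_k)|\leq\epsilon_k$, so $|\bb^{(k)}\cdot\Re(\bz)| = O(\epsilon)$ and $\Re(\ell_k(\bz)) = 1 - \bb^{(k)}\cdot\Re(\bz)$ stays bounded away from $0$, forcing $\ell_k(\bz)\neq 0$. Expanding the resulting product yields
$$\mT \doteq \prod_{j=1}^d \left(\mC^+_{\epsilon_j} - \mC^-_{\epsilon_j}\right) = \sum_{\eta \in \{\pm 1\}^d} \left(\prod_{j=1}^d \eta_j\right) \mC_{\xx_\eta},$$
where $\xx_\eta := (\eta_1\epsilon_1,\dots,\eta_d\epsilon_d)$ and $\mC_{\xx_\eta}$ is the imaginary fiber based at $\xx_\eta$.

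Finally, for $\epsilon_j$ small each basepoint $\xx_\eta$ lies in a unique component $B_\eta \in \adj(\bzer)$ (since no hyperplane of $\sing$ reaches the origin), giving a bijection $\{\pm 1\}^d \to \adj(\bzer)$; moreover the sign $\prod_j \eta_j$ equals the orthant sign $\sgn(\mO)$ of $B_\eta$, so
$$\mT \doteq \sum_{B\in\adj(\bzer)} \sgn(\mO)\,\mC_B = \torus_\bzer.$$

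The main obstacle is ensuring the iterated deformation stays in $\M$ and that the horizontal contributions from each rectangle genuinely vanish in $H_d(\M, -\infty)$. The first is handled by taking all $\epsilon_j$ small uniformly so that hyperplanes are avoided along every intermediate product chain; the second uses the strict positivity $\hat{r}_j > 0$ from the hypothesis $\rhat \in \R_{>0}^d$ to ensure the height blows down at infinity in each coordinate direction, even while other coordinates range over unbounded vertical fibers.
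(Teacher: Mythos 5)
Your proof is correct and takes essentially the same approach as the paper's: stretch the torus in the imaginary directions to infinity, using $|\Re(z_k)|\leq\epsilon_k$ to keep the deformation inside $\M$, and observe that the surviving pieces are the $2^d$ imaginary fibers over points near the origin with the orthant signs $\prod_j\eta_j = \sgn(\mO)$. The paper deforms all coordinates simultaneously via ellipses $\epsilon\cos\theta_j + i\delta\sin\theta_j$ with $\delta\to\infty$, whereas you do it one coordinate at a time via rectangles; this is a presentational rather than substantive difference.
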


\begin{proof}
The chain $\mT$ is parametrized by a map $\eta$ from the standard 
torus $(\R / (2 \pi \Z))^d$ defined by 
$$\eta (\bt) := (\ee \cos \theta_1 + i \delta \sin \theta_1 , \ldots , 
   \ee \cos \theta_d + i \delta \sin \theta_d ) \, $$
with $\delta = \ee$ arbitrarily small.  Increasing $\delta$
to infinity, we note that the chain remains in $\M$ and that the 
intersection with $\M_{\geq \low}$ converges to the alternating
sum of imaginary fibers $\torus_{\bzer}$.  The one-dimensional story
is illustrated in Figure~\ref{fig:Ex1ImFiber}.
\end{proof}

\begin{figure}
\centering
\includegraphics[width=0.3\linewidth]{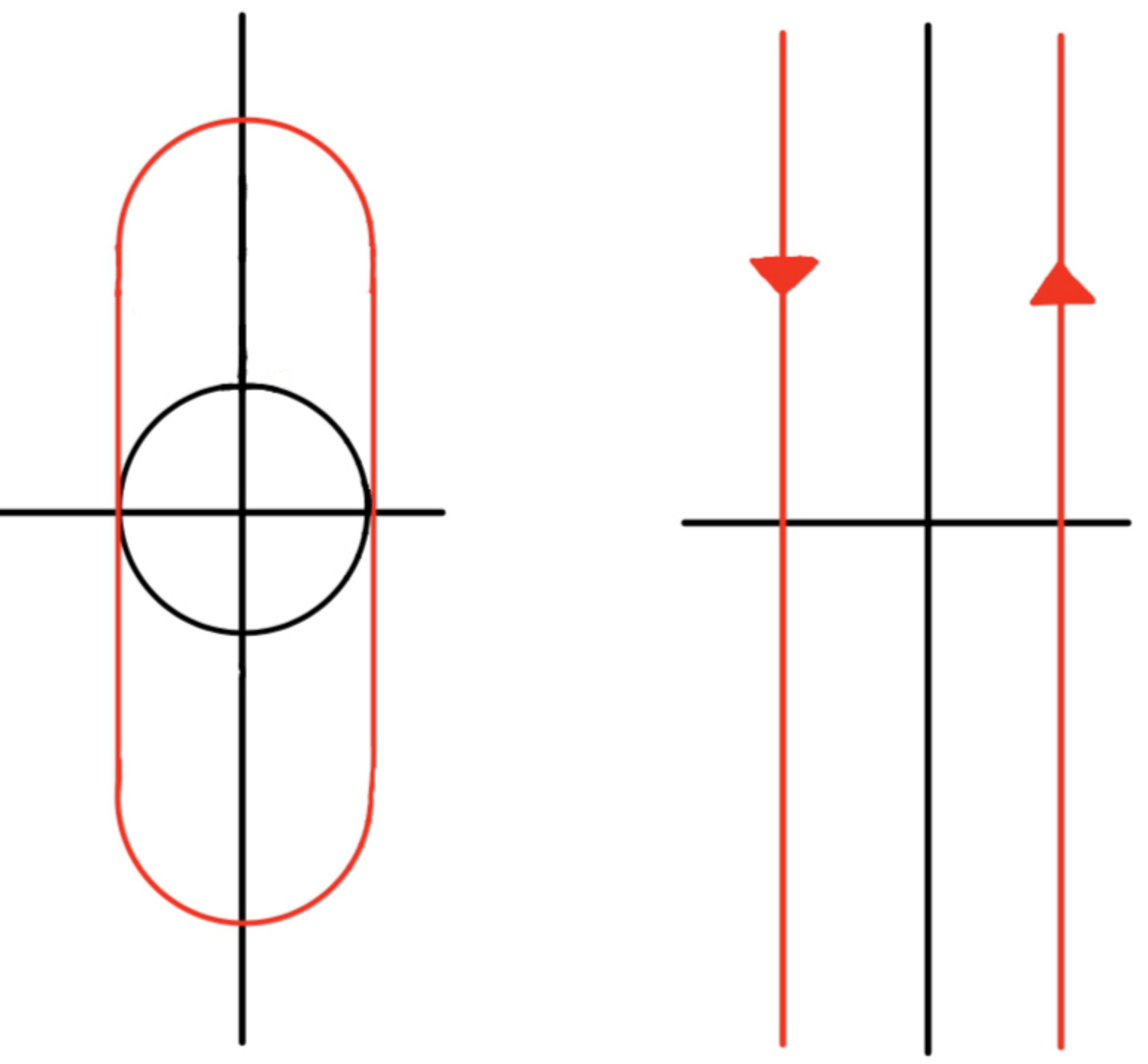} 
\caption{Deforming the domain of integration from circle to 
the difference of two imaginary fibers: stretching the grey circle
in the imaginary direction, on any bounded region the chain converges
locally to the red difference of fibers.}
\label{fig:Ex1ImFiber}
\end{figure}

The following result now immediately implies Theorem~\ref{thm:topol}.

\begin{theorem} \label{th:sumTorus}
$$\torus_{\bzer} = \sum_{\bs \in \contrib} \, \torus_{\bs} .$$
\end{theorem}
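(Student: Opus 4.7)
The plan is to express $\torus_{\bzer}$ in the basis $\{\torus_{\bs} : \bs \in \crit\}$ from Proposition~\ref{pr:torusBase}, compute the coefficient $c_{\bs_*}$ of $\torus_{\bs_*}$ for each critical point $\bs_*$, and verify that $c_{\bs_*}=1$ when $\bs_* \in \contrib$ and $c_{\bs_*}=0$ otherwise.

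First I would reduce $c_{\bs_*}$ to a single linking constant. By definition, $\torus_{\bzer} \doteq \sum_{B \in \adj(\bzer) \cap \mB} \sgn(\mathcal{O}(B))\, \mC_B$, where unbounded components drop out via $\mC_B \doteq 0$. Writing each $\mC_B = \mC_{\sigma(B)}$ via the bijection of Proposition~\ref{pr:critComp} and expanding using $\mC_{\sigma(B)} = \sum_{\bs'} \nu_{\sigma(B),\bs'} \torus_{\bs'}$, the coefficient becomes
$c_{\bs_*} = \sum_B \sgn(\mathcal{O}(B))\, \nu_{\sigma(B),\bs_*}$.
The iterative slide-and-replace of Proposition~\ref{pr:Cspans} introduces, at each step, only fibers on components adjacent to a critical point in the current orthant; since every critical point has nonzero coordinates, this process never leaves the orthant of $B$. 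Hence $\nu_{\sigma(B),\bs_*}=0$ unless $B$ lies in the orthant $\mathcal{O}_*$ of $\bs_*$, and since there is a unique component $B_{\mathcal{O}_*} \in \adj(\bzer)$ in $\mathcal{O}_*$, we get $c_{\bs_*} = \sgn(\mathcal{O}_*)\, \nu_{\sigma(B_{\mathcal{O}_*}),\bs_*}$.

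Next I would transport $B_{\mathcal{O}_*}$ to a component adjacent to $\bs_*$ using Lemma~\ref{lem:cross}. Let $B^*$ be the component in $\mathcal{O}_*$ adjacent to $\bs_*$ with $\ell_{k_j}(B^*)>0$ for every $k_j \in S(\bs_*)$. Both $B_{\mathcal{O}_*}$ and $B^*$ lie in the convex polyhedron $\Gamma_{\bs_*}$ from Proposition~\ref{prop:contribsing} and have identical sign patterns on the $\bs_*$-hyperplanes. By convexity of $\Gamma_{\bs_*}$, the two components can be joined by a sequence of single codim-1 crossings of hyperplanes not passing through $\bs_*$; iterating Lemma~\ref{lem:cross} along this path yields $\nu_{\sigma(B_{\mathcal{O}_*}),\bs_*} = \nu_{\sigma(B^*),\bs_*}$ (with the convention that both are zero if $\Gamma_{\bs_*}$ is unbounded).

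Finally, I evaluate $\nu_{\sigma(B^*),\bs_*}$ in the two cases. If $\bs_* \in \contrib$, Proposition~\ref{prop:contribsing} gives that $\Gamma_{\bs_*}$ is bounded (otherwise $h$ would be unbounded below on it) and that $\bs_*$ minimizes $h$ on the polytope $\overline{B^*} \subseteq \Gamma_{\bs_*}$, because the outward normal cone of $\overline{B^*}$ at $\bs_*$ coincides with $N(\bs_*)$ and contains $-\nabla h(\bs_*)$; thus $\sigma(B^*) = \bs_*$, and $\nu_{\sigma(B^*),\bs_*}$ is the diagonal entry from Proposition~\ref{pr:torusBase}, which combines with $\sgn(\mathcal{O}_*)$ to give $c_{\bs_*}=1$. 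If $\bs_* \notin \contrib$, then either $\Gamma_{\bs_*}$ is unbounded (and $\nu=0$ by convention), or $-\nabla h(\bs_*) \notin N(\bs_*)$ so $h$ decreases from $\bs_*$ into $\overline{B^*}$; then $\sigma(B^*) \neq \bs_*$ has strictly smaller height, and upper-triangularity of Proposition~\ref{pr:torusBase} forces $\nu_{\sigma(B^*),\bs_*}=0$. The main obstacle is verifying the path-connectedness inside $\Gamma_{\bs_*}$ needed to iterate Lemma~\ref{lem:cross}, which follows from convexity of $\Gamma_{\bs_*}$ together with genericity of $\rhat$ (ensuring the intermediate crossings are transverse and never cross two hyperplanes simultaneously).
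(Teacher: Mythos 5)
Your proof is correct and takes essentially the same route as the paper: reduce to a single linking constant per orthant, transport it via Lemma~\ref{lem:cross} to a component adjacent to $\bs_*$ with all $\ell_{k_j}>0$, and evaluate via Proposition~\ref{prop:contribsing} together with upper triangularity from Proposition~\ref{pr:torusBase}. You supply somewhat more detail than the paper (the explicit orthant-reduction via the slide-and-replace staying within one orthant, and the convexity argument for the transport path inside $\Gamma_{\bs_*}$), and your parenthetical about $\Gamma_{\bs_*}$ being unbounded is a slight overreach (unboundedness of $\Gamma_{\bs_*}$ does not by itself force $\nu=0$; what matters is that $\sigma(B^*)\neq\bs_*$ in the non-contributing case), but this does not affect the correctness of the argument.
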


\begin{proof}
Let $\bs^*$ be a critical point in the positive orthant $\mO$ and $B$
be the unique component of $\mMR$ in $\mO$ whose closure contains the
origin. The hyperplanes through $\bs_*$ divide $\mO$ into regions on
which $\nu_{\cdot , \bs_*}$ is constant by Lemma~\ref{lem:cross},
taking the common value 1 on the region containing $B(\bs_*)$
and vanishing on the other regions (as their critical points 
minimizing $h$ have smaller
height than $\bs_*$). Proposition~\ref{prop:contribsing} implies 
that the origin is in the closure of the region
containing $B(\bs^*)$ if and only if $\bs_*$ is contributing, 
so $\nu_{\sigma(B) , \bs_*} = 1$ if $\bs_* \in \contrib$
and zero otherwise. A similar argument proves the result for the other
orthants of $\R^d$.
\end{proof}

\subsection{Integrals over linking tori} \label{ss:integrals}

Throughout this section, the assumption that $F(\zz) = G(\zz) / \prod \ell_j(\zz)^{p_j}$
is simple and $\rr$ is generic remain in force.  If $G$ is a polynomial,
Proposition~\ref{pr:low} and Theorem~\ref{thm:topol} imply that
when $F$ is a rational function, 
\begin{equation} \label{eq:decomp}
[\bz^{n\br}] F(\bz) = \frac{1}{(2\pi i)^d}
   \sum_{\bs \in \contribS} 
   \int_{\torus_{\bs}} 
   \frac{F(\zz)}{\zz^{\rr + \one}} \, d\zz \, .
\end{equation}
More generally, when $G$ is analytic, there is an extra term on the right
decaying faster than any exponential $e^{-c |\rr|}$.  In any case,
we see that the problem is reduced to evaluating Cauchy 
integrals over linking tori, which has been solved, for instance, 
in~\cite{PW2} and~\cite[Section~10.3]{PW-book}.  

In order to provide some intuition, before quoting the general 
result we look at a special case.  Suppose that $\bs$ forms a 
zero-dimensional stratum, that is, a single point where $d$
of the $m$ hyperplanes intersect.  The linking torus is then
homeomorphic to an actual torus (cf.\ the end of Section~\ref{ss:definitions},
there is no tube).  In coordinates given by the linear functions defining
$\bs$, the
linking torus is the product of circles winding once counterclockwise
about the origin in each coordinate.  Taking $d$ iterated residues
(or, equivalently, a $d$-fold residue) one arrives at
Proposition~\ref{pr:complete} below.

\begin{defn}[logarithmic gradients] \label{def:log}
The \Em{logarithmic gradient} of any differentiable function $f$ is
\begin{equation} \label{eq:lgrad}
\lgrad f(\zz) := \left ( z_1 \frac{\partial f}{\partial z_1} , \cdots ,
   z_d \frac{\partial f}{\partial z_d} \right ) \, .
\end{equation}
If $\bs$ is a critical point on the stratum defined by 
the intersection of $s \leq d$ hyperplanes $\sing_{k_1}, \ldots , 
\ell_{k_s}$ then the \Em{log-gradient matrix} 
$\Gamma = \Gamma (\bs)$ is the $d \times d$ matrix whose first $k$ rows
are $\{ \lgrad \ell_{k_j} (\bs) : 1 \leq j \leq s \}$ and whose last $d-k$
rows are any $d-k$ standard basis vectors that complete these
logarithmic gradients to a positively oriented basis for $\R^d$. 
\end{defn}

\begin{pr}[\protect{\cite[Theorem~10.3.3]{PW-book}}] \label{pr:complete}
Suppose that $\bs$ has codimension $d$ and let $\Gamma$ be the 
log-gradient matrix as in Definition~\ref{def:log}.  If $G$ is a polynomial then
$$\frac{1}{(2 \pi i)^d} \int_{\torus_{\bs}} \zz^{-\rr-\one} F(\zz) \, d\zz 
   = \frac{\bs^{-\rr}}{(\pp - 1)!} \frac{G(\bs)}{|\det \Gamma|} 
   (\rr \Gamma^{-1})^{\pp - 1} \, .$$
When $G$ is analytic, rather than a polynomial, then there is a remainder term
decaying faster than an exponential function in $|\rr|$.  When all powers are~1,
the right-hand side simplifies to $\bs^{-\rr} G(\bs) / |\det \Gamma|$.
$\Cox$
\end{pr}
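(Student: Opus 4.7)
The strategy is to reduce the multivariate integral to a $d$-fold iterated residue at the origin by linearly ``straightening'' the $d$ hyperplanes meeting at $\bs$. Because $\bs$ has codimension $d$, exactly $d$ of the linear forms, which I relabel as $\ell_1,\dots,\ell_d$, vanish at $\bs$; the remaining $\ell_j$ are non-vanishing on a neighbourhood of $\bs$ and can be absorbed into $G$, at the cost of making it only analytic rather than polynomial near $\bs$. I may therefore assume $H(\zz) = \prod_{i=1}^d \ell_i(\zz)^{p_i}$.

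The main step is to introduce the local coordinates $u_i = \ell_i(\zz)$. Writing $B$ for the matrix whose $i$-th row is $\bb^{(i)}$, simplicity of $\A$ gives $\det B \neq 0$, so this is an affine bijection of a neighbourhood of $\bs$ onto a neighbourhood of $\bzer$, with $\zz(\uu) = \bs - B^{-1}\uu$. The linking torus $\torus_\bs$, defined as a signed sum of imaginary fibers over the $2^d$ orthants meeting at $\bs$, will be carried (up to an overall orientation sign) onto the positively oriented polycircle $\{|u_i| = \epsilon\}$ in $\C^d$; I would verify this by following each signed fiber through the affine change of variables.

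Pulling the integrand back to the $\uu$ coordinates converts the left-hand side into
\[
\frac{1}{|\det B|} \cdot \frac{1}{(2\pi i)^d} \oint_{|u_i| = \epsilon} \frac{\Psi(\uu)}{u_1^{p_1} \cdots u_d^{p_d}} \, d\uu,
\]
where $\Psi(\uu) := G(\zz(\uu)) / \zz(\uu)^{\rr + \one}$ is holomorphic near $\bzer$. The iterated Cauchy integral formula then identifies this with $[\uu^{\pp-1}]\Psi(\uu)/|\det B|$.

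To extract the Taylor coefficient I would expand each factor $z_j(\uu)^{-r_j-1} = \sigma_j^{-r_j-1}(1 - \sigma_j^{-1}(B^{-1}\uu)_j)^{-r_j-1}$ via the generalized binomial series, multiply out, and combine with $G(\zz(\uu)) = G(\bs) + O(\uu)$. With $D := \mathrm{diag}(\bs)$ one has $\Gamma = -BD$, hence $D^{-1}B^{-1} = -\Gamma^{-1}$ and $|\det\Gamma| = |\det B|\cdot|\bs^{\one}|$; these identities regroup the multinomial sum into the claimed form $\bs^{-\rr} G(\bs) (\rr \Gamma^{-1})^{\pp - 1}/((\pp-1)!\,|\det\Gamma|)$. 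The hard part will be the clean combinatorial bookkeeping of this Taylor coefficient together with the orientation and Jacobian sign analysis for $\torus_\bs$ versus the canonical polycircle; when $G$ is merely entire, any further Taylor tail of $G$ at $\bs$ contributes an integral over a deformed contour whose height can be made arbitrarily small, and the same height-function estimate as in Proposition~\ref{pr:low} shows this contribution decays faster than every exponential in $|\rr|$.
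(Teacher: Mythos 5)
The paper does not itself prove this proposition --- it is imported verbatim from Pemantle--Wilson (Theorem 10.3.3 of \cite{PW-book}), and the only ``proof'' given here is the one-sentence sketch immediately preceding the statement: in the coordinates $u_i = \ell_i(\zz)$ the linking torus is a polycircle, and one takes a $d$-fold residue. Your proposal is exactly that sketch made explicit, so your approach coincides with what the paper gestures at; the pieces you defer (verifying in $H_d(\M,-\infty)$ that the signed sum of $2^d$ imaginary fibers carries to the oriented polycircle $\{|u_i|=\epsilon\}$, and the Jacobian/orientation sign-tracking) are precisely the content the paper also leaves to the reference, and are straightforward if tedious.

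However, the step you dismiss as ``clean combinatorial bookkeeping'' hides an actual problem with your final claim. The iterated Cauchy formula yields $[\uu^{\pp-\one}]\Psi(\uu)/|\det B|$ with $\Psi(\uu)=G(\zz(\uu))\prod_j \sigma_j^{-r_j-1}\bigl(1-\sigma_j^{-1}(B^{-1}\uu)_j\bigr)^{-r_j-1}$, and this coefficient is a \emph{polynomial} of degree $|\pp-\one|$ in $\rr$, not a single monomial. Your assertion that the multinomial sum ``regroups into the claimed form'' therefore cannot be correct as stated. Concretely, take $d=1$, $\ell=1-bz$ with $b>0$, and $p=2$: the linking-torus integral equals $[z^r]\,(1-bz)^{-2}=(r+1)b^r$, whereas the displayed right-hand side, with $\Gamma=\lgrad\ell(\bs)=-1$, evaluates to $-r\,b^r$ --- wrong sign and missing the additive lower-order term. (For $\pp=\bone$ everything does check out, and the $\sgn(B)$ factor in the definition of $\torus_\bs$ exactly cancels the $\prod|\sigma_j|/\prod\sigma_j$ sign from $|\det\Gamma|$, as your identity $\Gamma=-BD$ shows.) So you must actually carry out the coefficient extraction: keep the full polynomial in $\rr$ and track orientation signs carefully, and then determine whether the quoted right-hand side is the exact answer with some notational convention you are missing, only its top-degree term, or a misprint in the paper's quotation of the source theorem. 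Asserting the regrouping without doing this leaves a genuine gap at the one place where the computation has real content.
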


\begin{example}[$d$-fold residue] \label{ex:complete}
Continuing Example~\ref{ex:main2}, suppose $\hat{r}_1 \in (1/3,2/3)$
so that $\bs_{1,2} = (1,1)\in \contrib$. Then
$\Gamma(\bs_{1,2}) = -\begin{pmatrix} 2/3 & 1/3 \\ 1/3 & 2/3 \end{pmatrix}$
and $\pp = \one$, so
$$\frac{1}{(2 \pi i)^d} \int_{\torus_{\bs}} \zz^{-\rr-\one} F(\zz) \, d\zz 
   = \frac{1^n}{|\det \Gamma|} = 3 \, .$$
\end{example}

In general, any linking torus can be expressed as the product of an $s$-torus
with a $(d-s)$-dimensional imaginary fiber. Under our current assumptions,
we can thus always compute $s$ residues and be left with a $(d-s)$-dimensional
integral in stationary phase. The following result 
shows that $h$ is well-behaved around its critical points, so that we will
be able to compute the stationary phase integral.

\begin{pr}[$h$ is Morse] \label{pr:morse}
Let $\sing$ be the complexification of any real hyperplane arrangement
and let $\bs \in \sing_*$ be a critical point of $h|_{\mS}$ for any stratum $\mS$
of any dimension $k$.  Then the Hessian matrix of $h|_{\mS}$ at $\bs$ 
in any local $k$-dimensional complex coordinates is nonsingular.
\end{pr}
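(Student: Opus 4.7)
The plan is to reduce the statement about the real Hessian of $h|_\mS$ to a nondegeneracy statement about a complex bilinear form, and then resolve that statement using the linear algebra of the ambient Hessian together with two facts already available: the critical point $\bs$ is real (by the preceding lemma) and the flat containing $\mS$ is cut out by real linear equations.

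First I would write $h = \operatorname{Re}(g)$ where $g(\bz) := -\sum_{j=1}^d \hat{r}_j \log z_j$ is locally holomorphic near $\bs$ (this uses the preceding lemma, which guarantees that $\bs \in \R^d$ has no zero coordinates). Fixing local holomorphic coordinates $w = u + iv$ on $\mS$ near $\bs$, the restriction $g|_\mS$ is holomorphic in $w$. A short Cauchy--Riemann computation identifies the $2k \times 2k$ real Hessian of $h|_\mS$ at $\bs$ in the $2k$ real coordinates $(u,v)$ with the block matrix
\[
\begin{pmatrix} \operatorname{Re}(H_\C) & -\operatorname{Im}(H_\C) \\ -\operatorname{Im}(H_\C) & -\operatorname{Re}(H_\C) \end{pmatrix},
\]
whose determinant equals $(-1)^k \lvert \det H_\C \rvert^2$, where $H_\C$ is the $k \times k$ complex Hessian of $g|_\mS$ at $\bs$. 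Thus real nondegeneracy of $h|_\mS$ reduces to complex nondegeneracy of $g|_\mS$, which is coordinate-independent.

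Next I would compute $H_\C$ intrinsically. The ambient complex Hessian of $g$ at $\bs$ is the diagonal matrix $D := \operatorname{diag}\!\left(\hat r_1/\sigma_1^2, \ldots, \hat r_d/\sigma_d^2\right)$, which is real with strictly positive entries thanks to the reality of $\bs$. Because the stratum lies in an affine flat, we may parametrize $\mS$ by straight lines $\gamma(t) = \bs + tv$; the chain rule then gives $\frac{d^2}{dt^2}(g\circ\gamma)\lvert_{t=0} = v^T D v$ with no curvature correction, so $H_\C$ is the restriction of the symmetric bilinear form $(v,w) \mapsto v^T D w$ to the tangent space $T = T_\bs \mS$. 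Here $T = \ker B$, where $B$ is the $s \times d$ \emph{real} matrix whose rows are a maximal linearly independent subset of the normals $\bb^{(k_1)}, \ldots, \bb^{(k_s)}$ defining the flat containing $\mS$.

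To finish, suppose $v \in T$ is in the kernel of this restricted form, i.e.\ $v^T D w = 0$ for every $w \in T$. Then $Dv \in T^{\perp} = \operatorname{Row}(B)$ under the complex bilinear pairing, so $Dv = B^T \lambda$ for some $\lambda \in \C^s$. Substituting $v = D^{-1} B^T \lambda$ into the defining condition $Bv = 0$ yields $(B D^{-1} B^T)\lambda = 0$. Since $B$ is real of full row rank $s$ and $D^{-1}$ is real positive-definite diagonal, $BD^{-1}B^T$ is a real positive-definite Gram matrix, hence invertible over $\C$; therefore $\lambda = 0$ and $v = 0$, as required.

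The main technical nuisance is the determinant identity $\det H_\R = (-1)^k \lvert \det H_\C \rvert^2$ for the real Hessian of the real part of a holomorphic function, which is a standard but somewhat finicky calculation best done once via a block diagonalization of the matrix above. The substantive content of the proposition is then packaged cleanly into the positive-definiteness of $BD^{-1}B^T$, which rests squarely on the two hypotheses already secured: the critical point is real and the defining forms have real coefficients.
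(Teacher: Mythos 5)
Your proof is correct, but it takes a genuinely different route from the paper's.

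The paper's proof is geometric: it notes that $h$ restricted to the real $k$-dimensional slice $\bs + \mS_\R$ has a nondegenerate local minimum at $\bs$ (by strict convexity of $-\log$, with second derivative $\hat r_j/\sigma_j^2>0$ in each coordinate), while on the imaginary slice $\bs + i\mS_\R$ it has a nondegenerate local maximum (since $|\sigma_j + iy_j|$ strictly increases in $|y_j|$, contributing $-\hat r_j/\sigma_j^2<0$). A symmetric form that is positive-definite on one $k$-dimensional subspace and negative-definite on a complementary $k$-dimensional subspace is automatically nonsingular, so the Hessian is nondegenerate with signature $(k,k)$ --- the ``Morse with middle index'' conclusion.

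Your argument instead reduces the real Hessian question to nondegeneracy of the complex Hessian of $g|_\mS$ (via the identity $\det H_\R = (-1)^k|\det H_\C|^2$), computes $H_\C$ as the restriction of the positive real diagonal form $D = \operatorname{diag}(\hat r_j/\sigma_j^2)$ to $\ker B$ using the affine structure of the flat, and then establishes nondegeneracy by showing $BD^{-1}B^T$ is a real positive-definite Gram matrix. Both arguments use the same two essential ingredients --- that $\bs$ is real (which makes $D$ real positive and makes the real/imaginary slice decomposition meaningful) and that the flat is affine with real defining equations --- but package them differently. The paper's version is shorter and directly exhibits the signature; yours isolates the content in a concrete invertibility statement about $BD^{-1}B^T$ and, by passing through the complex Hessian, is coordinate-free from the start, which makes the phrase ``in any local $k$-dimensional complex coordinates'' in the statement automatic. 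The one step worth being careful about in your write-up is the determinant identity $\det H_\R = (-1)^k|\det H_\C|^2$, which you rightly flag as the technical nuisance; it is standard but does need either a block-diagonalization or a conjugation argument to verify.
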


\begin{proof}
Because $\bs$ is a critical point the differential $dh$ vanishes on $\mS$
at $\bs$, so $h(\bs + \zz)$ is locally a quadratic form in $\zz$. If 
$\xx$ is any real vector then every coordinate of $\xx+i\yy$ has increasing
modulus as any coordinate of the real vector $\yy$ moves away from the origin.
Thus, if $\mS_\R$ denotes the real part of the stratum $\mS$ 
then $h_\rr(\zz) = -\sum_j r_j \log |z_j|$ has a local maximum 
at $\zz=\bs$ on the space $\bs + i\mS_\R$ with real dimension $k$
and a local minimum at $\zz=\bs$ on the space $\bs + \mS_\R$ with
real dimension $k$. In other words, $h(\bs+\zz)$ is Morse 
at the origin of $\mS \oplus i \, \mS$ with middle index.
\end{proof}

Proposition~\ref{pr:morse} implies quadratic nondegeneracy of the phase 
function for the stationary phase integral, allowing it to be
evaluated as a standard saddle point integral.

\begin{pr}[\protect{\cite[Theorem~10.3.4]{PW-book}}] \label{pr:partial}
Let $S = \{k_1,\dots,k_s\}$ and suppose $\bs$ lies on the flat $\sing_S$.
Let $\Gamma$ be the log-gradient matrix in Definition~\ref{def:log} and $M$ be
the $d \times d$ matrix whose first $k$ rows
are the coefficient vectors $\{ \bb^{(k_j)} : 1 \leq j \leq k \}$ 
and whose last $d-k$ rows are any $d-k$ standard basis vectors that complete these
gradients to a positively oriented basis for $\R^d$. 
Define the $(d-k)\times(d-k)$ matrix $\mH$ to be the Hessian of
\[ \phi(\by) = \br \cdot \log\left(\bs - iM^{-1} 
\begin{psmallmatrix} \bzer \\ \by \end{psmallmatrix} \right) \] 
evaluated at $\by=(y_1,\dots,y_{d-k})=\bzer$, where the logarithm is taken
coordinate-wise. Then there is an explicitly computable asymptotic series
in $|\rr|=|r_1| + \cdots + |r_d|$ beginning
\begin{align*}
\frac{1}{(2 \pi i)^d} \int_{\torus_{\bs}} \zz^{-\rr-\one} F(\zz) \, d\zz
   = \left[ 
   \frac{ \bs^{-\rr} G(\bs)}
   {\prod_{j \notin S}\ell_j(\bs)^{p_j}\prod_{1 \leq j\leq s}(p_{k_j}-1)!
   \sqrt{\det \mH} \; |\det \Gamma| } 
   \right]&
   (2 \pi |\rr|)^{-(d-k)/2} (\rr \Gamma^{-1})^{\pp - 1} \\
   & \hspace{0.5in} \times \left(1 + O\left(\frac{1}{|\rr|}\right)\right).
\end{align*}
All asymptotic terms in this series are uniform as $\bs$ varies
over a compact subset of $S$.
$\Cox$
\end{pr}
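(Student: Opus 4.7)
The plan is to decompose the linking torus into a normal and tangential part, perform iterated residues on the normal piece, and then apply saddle-point asymptotics to what remains.

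First I would set up local coordinates adapted to the stratum. Since $\A$ is simple, the normals $\bb^{(k_1)},\dots,\bb^{(k_s)}$ are linearly independent, so the matrix $M$ from the statement is invertible and defines a linear change of coordinates $(\uu,\vv)\mapsto\zz$ with $u_j = \ell_{k_j}(\zz)$ for $1\le j\le s$ and $v_j$ coordinates transverse to the flat. In these coordinates the stratum $\sing_S$ is $\{\uu=\bzer\}$, the critical point $\bs$ corresponds to some $(\bzer,\bzer)$, and the linking torus $\torus_{\bs}$ factors (up to orientation and homotopy in $\M$) as the product of $s$ small circles $|u_j|=\epsilon$ oriented counterclockwise with the $(d-s)$-dimensional imaginary fiber $\vv\in i\R^{d-s}$ lying in the tangent space to $\sing_S$ at $\bs$. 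The Jacobian of the change of variables produces the factor $1/|\det M|$; combined with the $\zz^{-\rr-\one}$ factor from the integrand, this is what reorganizes into $1/|\det \Gamma|$ after pulling logarithmic derivatives (using the identity $\Gamma(\bs) = M\cdot\mathrm{diag}(\bs)$ on the first $s$ rows).

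Next I would perform $s$ iterated residues in the $u_j$ variables. The pole at $u_j=0$ has order $p_{k_j}$, so the residue is the coefficient of $u_j^{p_{k_j}-1}$ in the Taylor expansion of the remaining analytic part, yielding a factor of $1/(p_{k_j}-1)!$ and a $(p_{k_j}-1)$-fold partial derivative. Expanding $\zz^{-\rr}$ along the normal directions produces polynomial factors in $\rr$, and the leading contribution assembles into the $(\rr\Gamma^{-1})^{\pp-\bone}$ term (with the $\prod_{j\notin S}\ell_j(\bs)^{p_j}$ factor surviving unevaluated because those $\ell_j$ are nonzero at $\bs$). After these residues one is left with the $(d-s)$-fold integral
\begin{equation*}
\frac{1}{(2\pi i)^{d-s}}\int_{i\R^{d-s}} e^{-\rr\cdot\log\left(\bs-iM^{-1}\begin{psmallmatrix}\bzer\\ \by\end{psmallmatrix}\right)}\cdot A(\by)\,d\by
\end{equation*}
over the imaginary fiber, where $A$ absorbs the residue coefficients and the $\prod_{j\notin S}\ell_j^{-p_j}$ factor.

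The remaining step is a multivariate saddle-point estimate applied to this integral. The phase function is $\phi(\by)$ from the statement; by construction $\nabla\phi(\bzer)=0$ because $\bs$ is critical for $h_\rhat$ restricted to $\sing_S$, and by Proposition~\ref{pr:morse} the Hessian $\mH$ is nondegenerate. A standard Morse-lemma/Gaussian integration argument then gives the asymptotic expansion with leading term $(2\pi|\rr|)^{-(d-s)/2}/\sqrt{\det\mH}$ times $A(\bzer)$, with the $1+O(1/|\rr|)$ error coming from higher-order terms in the Taylor expansion of $\phi$ and $A$. Uniformity as $\bs$ varies over a compact subset of $S$ follows because all data ($\Gamma$, $\mH$, $A$) vary continuously, and the steepest-descent contour can be chosen locally uniformly.

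The main obstacle is bookkeeping: keeping track of orientations when decomposing $\torus_{\bs}$ into a product, matching the factor $|\det M|$ from the change of variables with the prescribed factor $|\det\Gamma|$ after logarithmic differentiation, and correctly expanding the Taylor series in the iterated residue when some $p_{k_j}>1$ so that the factor $(\rr\Gamma^{-1})^{\pp-\bone}$ appears cleanly. The saddle-point step itself is standard once nondegeneracy is in hand, and the uniformity claim reduces to observing that every constant in the formula depends analytically on $\bs$.
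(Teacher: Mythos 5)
The paper does not include a proof of this proposition: it quotes the result from~\cite[Theorem~10.3.4]{PW-book}, as the terminal $\square$ after the statement indicates. Your outline matches precisely the roadmap given in the paragraph preceding the statement (factor $\torus_\bs$ as an $s$-torus times a $(d-s)$-dimensional imaginary fiber, take $s$ iterated residues, then apply stationary phase with the nondegeneracy from Proposition~\ref{pr:morse}), and it is consistent with the cited proof in the reference. The one place where you are a bit loose, as you yourself flag, is the Jacobian bookkeeping: the identity $\Gamma(\bs)=M\cdot\mathrm{diag}(\bs)$ holds (up to sign, since $\lgrad\ell_{k_j}=-\bs\odot\bb^{(k_j)}$) only on the first $s$ rows, because the last $d-s$ rows of $\Gamma$ are required to be standard basis vectors rather than their rescalings; the remaining $\sigma_j$ factors must instead be traced through the $\zz^{-\one}$ term in the integrand and the choice of completing rows in $M$. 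This is a real but routine accounting point, not a gap in the argument.
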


\begin{rem}
Raichev and Wilson~\cite{RaichevWilson2011} give explicit formulas for the
coefficients in the asymptotic expansion in Proposition~\ref{pr:partial}, 
building on expansions of H{\"o}rmander~\cite[Theorem~7.7.5]{Hormander1990a};
see also Melczer~\cite[Proposition 5.3]{melczer-book}. 
\end{rem}

We summarize the results of this Section as follows.

\begin{theorem}
\label{thm:mainASM}
Suppose $F(\bz)$ is simple and $\br$ is a generic direction, and let $\contrib$ 
denote the set of contributing singularities for $F(\bz)$. Then as
$\rr\rightarrow\infty$ with $\rhat$ staying in a compact subset of
$\R_{>0}^d$ consisting of only generic directions there exist
asymptotic series $\Phi_{\bs}(\rr)$ such that 
\begin{equation} \label{eq:MainAsm}
[\zz^{\rr}] F(\bz) = \sum_{\bs \in \contribS(\rhat)} \Phi_{\bs}(\rr).
\end{equation}
If $\bs$ lies on the flat $\mV_S$ with $S = \{k_1,\dots,k_s\}$ then,
for any positive integer $K$, there exist effective constants $C_j^{\bs}$ 
such that 
$$ \Phi_{\bs}(\rr) = \bs^{-\br} \, |\rr|^{p_{k_1}+\cdots+p_{k_s} - (s+d)/2}
   \left ( \sum_{j=0}^K C_j^{\bs} |\rr|^{-j} 
   + O\left(|\rr|^{-K- 1}\right)\right ) \, . $$
If $G(\bs) \neq 0$ then the leading asymptotic term of $\Phi_{\bs}$ is
given by Proposition~\ref{pr:partial}.  The error term varies
uniformly as $\rhat$ varies without crossing nongeneric directions 
and $\bs$ varies over a compact subset of any stratum. 
\end{theorem}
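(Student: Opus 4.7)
The plan is to stitch together Theorem~\ref{thm:topol}, Proposition~\ref{pr:low}, and Proposition~\ref{pr:partial}, in that order.

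By Theorem~\ref{thm:topol} the Cauchy torus satisfies $\mT \doteq \sum_{\bs \in \contribS(\rhat)} \torus_\bs$ in $H_d(\mM,-\infty)$. Integrating the Cauchy form $F(\zz)\zz^{-\rr-\one}\,d\zz$ against this identity yields
\[
[\zz^\rr]F(\zz) \;=\; \sum_{\bs \in \contribS(\rhat)} \Phi_\bs(\rr) \;+\; R(\rr), \qquad \Phi_\bs(\rr) := \frac{1}{(2\pi i)^d}\int_{\torus_\bs}\!\!\frac{F(\zz)\,d\zz}{\zz^{\rr+\one}},
\]
where $R(\rr)$ is the integral over a chain $C_a$ in $\mM_{\leq a}$ for some $a < \low$. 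By Proposition~\ref{pr:low}, $|R(\rr)| \leq K_a e^{a|\rr|}$ for every such $a$, so $R(\rr)$ is exponentially smaller in $|\rr|$ than every $\Phi_\bs$ (whose modulus is of order $e^{|\rr|h(\bs)}$ with $h(\bs) \geq \low > a$). When $G$ is a polynomial, the stronger second assertion of Proposition~\ref{pr:low} in fact forces $R(\rr) \equiv 0$ once $|\rr|$ is sufficiently large along a fixed direction.

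Next I would apply Proposition~\ref{pr:partial} to each $\Phi_\bs$; its hypotheses are met because Proposition~\ref{pr:morse} shows the height function is Morse on every stratum. That proposition produces an asymptotic expansion in descending powers of $|\rr|^{-1}$ whose leading term, after separating all factors that depend on $|\rr|$ from those depending only on $\rhat$, contributes $|\rr|^{-(d-s)/2}$ from the Gaussian prefactor together with a factor of $|\rr|^{p_{k_1}+\cdots+p_{k_s}-s}$ from $(\rr\Gamma^{-1})^{\pp-\one}$ (each coordinate of $\rr\Gamma^{-1}$ is a linear form of size $|\rr|$ times a smooth function of $\rhat$). These combine to the stated total power $|\rr|^{p_{k_1}+\cdots+p_{k_s}-(s+d)/2}$. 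The remaining $\rr$-independent data — $\bs^{-\rr}$, $G(\bs)$, the boundary values $\ell_j(\bs)$, and the determinants $\det\Gamma$ and $\det\mH$ — are absorbed into the constants $C_j^\bs$, with higher-order terms of the H\"ormander/Raichev--Wilson expansion cited after Proposition~\ref{pr:partial} supplying $C_j^\bs$ at each order $j$; truncating at $j = K$ leaves the remainder $O(|\rr|^{-K-1})$.

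Finally, the uniformity claim inherits from the uniformity built into Proposition~\ref{pr:partial}, together with one bookkeeping check: that the index set $\contribS(\rhat)$ is locally constant on the complement of the non-generic directions. This is immediate from the characterization $\bs \in \contribS(\rhat) \iff -\grad h_\rhat(\bs) \in \operatorname{int} N(\bs)$, an open condition on $\rhat$. The main obstacle throughout is really bookkeeping — isolating which factors scale with $|\rr|$ versus depending only on $\rhat$, and verifying the exponent arithmetic — since the substantive topological content has already been discharged by Theorem~\ref{thm:topol} and the saddle-point analysis by Proposition~\ref{pr:partial}.
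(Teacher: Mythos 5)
Your proof is correct and matches the paper's intended argument essentially verbatim: Theorem~\ref{thm:mainASM} is stated as a summary of Section~\ref{ss:integrals} with no separate proof, and the assembly you describe --- Theorem~\ref{thm:topol} together with Proposition~\ref{pr:low} yielding the decomposition~\eqref{eq:decomp}, Proposition~\ref{pr:morse} supplying the nondegeneracy needed for Proposition~\ref{pr:partial}, and the Raichev--Wilson expansions furnishing the higher-order constants $C_j^{\bs}$ --- is exactly the paper's derivation. Your exponent bookkeeping ($-(d-s)/2$ from the Gaussian prefactor and $p_{k_1}+\cdots+p_{k_s}-s$ from $(\rr\Gamma^{-1})^{\pp-\one}$, summing to $p_{k_1}+\cdots+p_{k_s}-(s+d)/2$) and the uniformity observation via openness of the condition $-\grad h_{\rhat}(\bs)\in N(\bs)$ are both correct.
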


\begin{rem} \label{rem:highpoles}
When $\bs$ has dimension zero and $G$ is a polynomial
then $\Phi_{\bs} = \bs^{-n \br} P_{\bs}(n)$, where $P_{\bs}(n)$ 
is a polynomial in $n$ of degree $p_{k_1}+\cdots+p_{k_d}-d$
which can be determined exactly.  The asymptotic expansion in
deceasing powers of $|\rr|$ has no further terms and the remainder
is exponentially decreasing in $|\rr|$.  This phenomenon, that 
asymptotic behaviour can be determined in this case up to an 
exponentially small remainder, was previously noted by 
Pemantle~\cite{Pemantle2000}.
\end{rem}

\subsection{Implementation and Additional Examples}
\label{sec:implement}
A Maple implementation of Theorem~\ref{thm:mainASM} via Algorithm~\ref{alg:1} 
is available at
\begin{center}
\url{https://github.com/ACSVMath/ACSVHyperplane}
\end{center} 
and allows the user to easily derive asymptotics 
for a generic direction in the non-simple case. We illustrate a few 
additional examples here, pointing out the computer algebra issues 
which arise.

\begin{figure}
\centering
\includegraphics[width=0.8\linewidth]{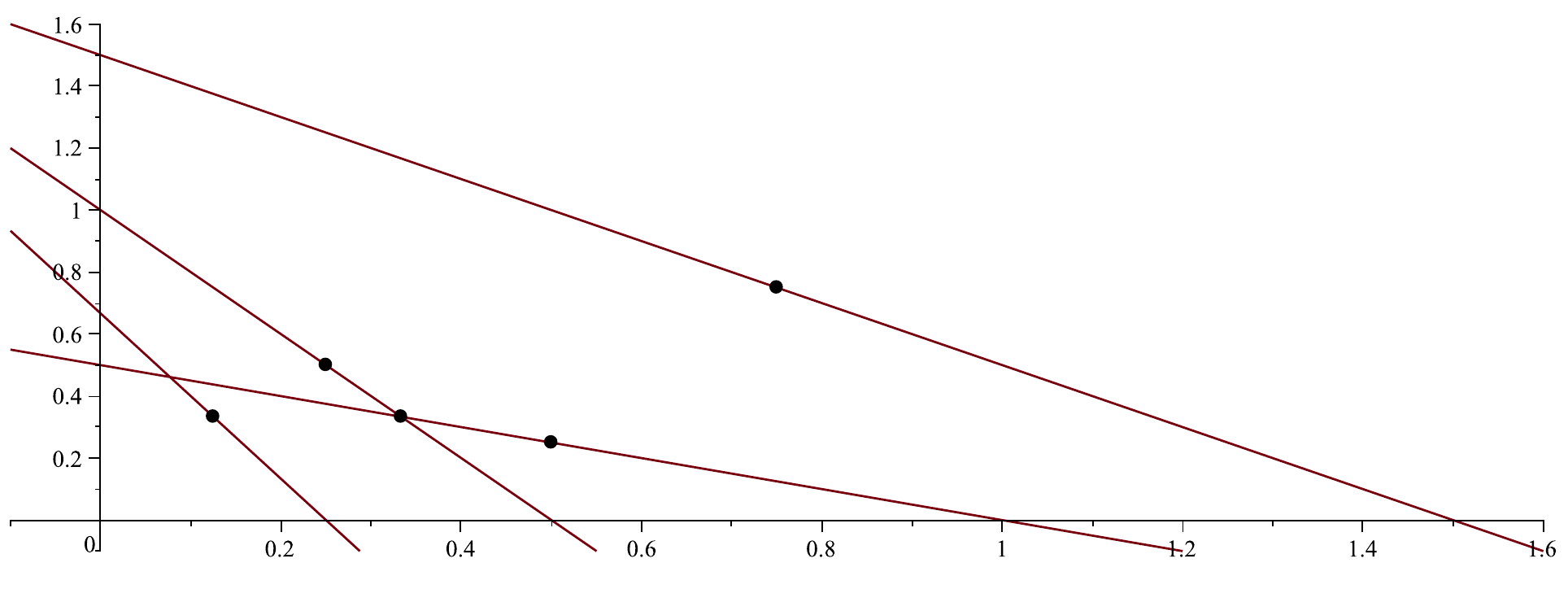}
\caption{Real singularities and contributing points of the rational 
function in Example~\ref{ex:compute1}. No singularities outside the first 
quadrant are contributing.}
\label{fig:compute}
\end{figure}

\begin{example}
\label{ex:compute1}
Consider asymptotics in the main diagonal direction 
$\rr = (n,n)$ for the rational function 
\[ F(x,y) = \frac{1}{(1-2x-y)(1-x-2y)(1-4x-3y/2)(1-2x/3-2y/3)},\]
whose real singularities and contributing points are illustrated in 
Figure~\ref{fig:compute}.  The algorithm detects 5 contributing points: 
note that one of the common intersections of two factors is contributing 
while the other intersections (one in the first quadrant and further 
intersections in the other quadrants) are not contributing. The algorithm 
then computes asymptotic contributions with leading terms
\[ \frac{64 \cdot 8^n}{11 \sqrt{n \pi}}, \quad \frac{9^n}{3}, \qquad 
\frac{32 \cdot 8^n}{3 \sqrt{n \pi}}, \qquad \frac{10368 \cdot 24^n}{625 
\sqrt{n \pi}}, \qquad \frac{-128 \cdot (16/9)^n}{625 \sqrt{n \pi}},  \]
giving dominant asymptotics
\[ [x^ny^n]F(x,y) = \frac{10368 \cdot 24^n}{625 \sqrt{n 
\pi}}\left(1+O\left(\frac{1}{n}\right)\right). \]
\end{example}

\begin{example}
Our previous examples have been restricted to dimension 2, and admitted 
contributing points in the first quadrant only, in order to guide 
intuition and allow for visualization. In order to illustrate our results 
in a more general setting we consider the rational function
\[ F(x,y,z,w) = \frac{1}{(1+2x+y+z+w)(1-x-3y-z-w)(1-x+y-4z+w)(1-x-y+z-
5w)}\]
in the direction $\br=(n,2n,n,2n)$. Here there are 20 contributing 
singularities in multiple orthants, many flats have contributing 
singularities in multiple quadrants, and several contributing points have 
irrational coordinates. We use the symbolic-numeric methods of Melczer 
and Salvy~\cite{MelczerSalvy2021} to store the coordinates of the 
contributing points: for each contributing point $\bs$ the algorithm 
outputs an algebraic number $\alpha$ defined by a square-free integer 
polynomial $P(u)$ and isolating interval, together with integer 
polynomials $Q_j(u)$ for each coordinate, such that the $j$th coordinate 
$\sigma_j = Q_j(\alpha)/P'(\alpha)$. 

For example, the flat defined by 
\[ 0 \;=\; 1+2x+y+z+w \; = \;  1-x-3y-z-w \;  = \;  1-x+y-4z+w 
\;  = \;  1-x-y+z-5w \]
contains three critical points, two of which are contributing points with 
coordinates
\[ \left\{ \left( \frac{-3w-1}{2}, \frac{3-3w}{4}, \frac{11w-3}{4} 
\right) : 99w^3-61w^2-9w+3 = 0, \, w \approx .1791\dots \text{ or } 
.6843\dots \right\} \]
containing algebraic numbers of degree three (the final algebraic 
conjugate is critical but not contributing). The algorithm encodes these 
numbers as 
\[ (x,y,z,w) = 
\left(\frac{Q_x(u)}{P'(u)},\frac{Q_y(u)}{P'(u)},\frac{Q_z(u)}{P'(u)},
\frac{Q_w(u)}{P'(u)}\right) \]
evaluated at the roots of $P(u) = 2970u^3-282927u^2+8961876u-94409378$ of 
value approximately 29.731 and 34.86, where
\begin{align*} 
Q_x(u) &= -7200u^2+459750u-7326354 &
Q_y(u) &= 5310u^2-335979u+5298699 \\
Q_z(u) &= -1650u^2+100215u-1504811 & 
Q_w(u) &= 1830u^2-117882u+1896944.
\end{align*}
The roots of $u$ are given to sufficient precision to separate them among 
those of $P$. The algorithm also computes the constants giving $-(\nabla 
h_{\br})(\bz)$ as a linear combination of the coefficient vectors 
$\bb^{(j)}$, and determines $u$ with enough accuracy to decide whether 
all these constants are positive (in which case the critical point is 
contributing) or whether any are zero (in which case $\br$ is a non-
generic direction).

There is some randomness in selecting the polynomial $P(u)$; see Melczer 
and Salvy~\cite{MelczerSalvy2021} for the advantages of this 
representation and how to compute it, and the Maple worksheet 
accompanying this paper for more details on this example. Computing the 
asymptotic contributions of each contributing point here gives dominant 
asymptotics of the form
\[ [x^ny^nz^nw^n]F(x,y) = \frac{C}{\pi}\left(\frac{-11072781\sqrt{249}-
54475983}{3125}\right)^n n^{-1} \left(1+O\left(\frac{1}{n}\right)\right), 
\]
where $C=-0.109\dots$ is an explicit degree 4 algebraic number.
\end{example}

\subsection{Vanishing of Leading Coefficients} \label{ss:vanish}

To find dominant asymptotics of $[\bz^{n\rhat}]F(\bz)$ using 
Theorem~\ref{thm:mainASM}, one starts with a contributing singularity 
$\bs$ of largest height and tries to find the first constant $C_j^{\bs}$ 
which is non-zero, repeating the process until all maximum height 
contributing singularities have been examined.  If the right-hand side
of~\eqref{eq:MainAsm} is non-zero, such a non-zero constant $C_j^{\bs}$ 
exists and will eventually be found.  When there is a unique 
contributing point $\bs$ of maximal height with
$G(\bs) \neq 0$ then this gives dominant asymptotic behaviour.  
If, however, there is more than one such contributing point
then it is possible that their sum will sometimes be of smaller
order: for instance, if $F(z)=1/(1-z^2)$ our algorithm will correctly
return $[z^n]F(z) = 1+(-1)^n$. For generic directions $\rr$ the 
leading term in our asymptotic statement will be
non-zero.

A more serious difficulty occurs when all $C_j^{\bs}$ vanish at
all contributing points $\bs$ of largest height.  
This implies an exponentially smaller
estimate $\Phi_{\brho}$ holds, where $\brho$ is a critical point of height
less than $\bs$.  We give additional details on this in 
Theorem~\ref{th:ideal} below, after some examples illustrating 
the possible outcomes.

\begin{example} \label{ex:three}
Consider the rational functions
\[ A(x,y) = \frac{1-x-y}{1-x-y}, \quad B(x,y) = \frac{1}{1-x-y}, 
\quad C(x,y) = \frac{x-2y^2}{1-x-y}, \quad 
\text{and} \quad D(x,y) = \frac{x-y}{1-x-y}. \]
These functions share the same denominator, and all appear to
admit the single contributing singularity $\bs = (1/2,1/2)$.  
Of course, $A$ only appears to admit this point because it is not
simplified in lowest terms (simplified it is the constant 1) but
$\bs$ is a contributing point of $B,C,$ and $D$. Applying 
Theorem~\ref{thm:mainASM} to the main diagonal of $B$ gives
\[ [x^ny^n]B(x,y) = 4^n\left(\frac{1}{\sqrt{\pi}n^{1/2}}+ 
O\left(n^{-3/2}\right)\right).\]
Because the numerator of $C(x,y)$ vanishes at $\bs$ we 
determine dominant asymptotic behaviour of its main diagonal
by computing higher order constants, ultimately obtaining
\[ [x^ny^n]C(x,y) = 4^n\left(\frac{1}{4\sqrt{\pi}n^{3/2}} 
+ O\left(n^{-5/2}\right)\right).\]
Finally, direct inspection shows that the main diagonal of $D$ 
is identically zero, but applying Theorem~\ref{thm:mainASM} in an automatic manner 
only allows us to show for any $K>0$ that
\[ [x^ny^n]D(x,y) = O\left(\frac{4^n}{n^K}\right). \]
Note that for any $a>0$ we have
\[ [x^{an}y^n]D(x,y) = \left(\frac{(a+1)^{a+1}}{a^a}\right)^n\,\left(\frac{a-1}{\sqrt{2\pi a(a+1)n}} + O\left(n^{-3/2}\right)\right),  \]
so that the exponential growth of $[x^{an}y^n]C(x,y)$ approaches $4^n$ as $a\rightarrow1$.
\end{example}

\begin{rem}
Such unexpected exponential drops are related to the \emph{connection problem} from
enumerative combinatorics: given a 
sequence satisfying a linear recurrence relation with polynomial 
coefficients---including any diagonal sequence $[\bz^{n\br}]F(\bz)$ with $F$ 
rational---one can compute a finite asymptotic basis of functions such 
that dominant asymptotics of the sequence under consideration is a
$\C$-linear combination of basis elements. The real coefficients in such a 
linear combination of basis elements can be determined rigorously to any 
desired numerical accuracy~\cite{Hoeven2001,Mezzarobba2016}, however it 
is still unknown whether it is decidable to determine which constants are 
exactly zero (see also Section VII.9 of Flajolet and 
Sedgewick~\cite{FlajoletSedgewick2009}).
\end{rem}

When $\bs$ forms a zero-dimensional stratum, things are better.
The degree of the polynomial produced by Proposition~\ref{pr:complete} 
is bounded above by the sum of $(p_j - 1)$ over those $j$ such that 
$\ell_j(\bs)=0$.  It follows that testing $\Phi_{\bs} = 0$
can be done rigorously
by computing a sufficient number of effective coefficients.  When the 
contribution of all maximal height contributing singularities is known 
to vanish, one can repeat this effective process on the contributing 
singularities of next highest height, and so on.  We now give an example
where the highest contributing critical point $\bs$ is of dimension zero 
and it is indeed necessary to go to the next lower critical point.

\begin{example}
Let 
$$ F(x,y) = \frac{x-y}{\left(1 - \frac{2x+y}{3}\right)
   \left(1 - \frac{x+2y}{3}\right)} $$
and consider asymptotics in the direction $(r,s)=n(\alpha,1-\alpha)$ for
$\alpha = \hat{r}_1 \in (2/3,3/4)$.
This rational function has the same denominator the one in 
Example~\ref{ex:main}, so all critical points, contributing 
singularities, etc. are the same.  In particular, the point $\bs_{1,2} = 
(1,1)$ is a contributing singularity of maximal height.  This time
the contribution at $\Phi_{\bs_{1,2}}$ is identically zero:
this is easy to discover because our degree bound implies the 
leading asymptotic term is a polynomial of degree zero, and one 
need only evaluate the first coefficient.  
Alternatively, one may verify that the numerator $x-y$ 
is in a specific ideal described below.  Dominant asymptotics of 
$a_{r,s}$ are thus determined by $\Phi_{\bs_1}$ and $\Phi_{\bs_2}$,
where $\bs_1$ and $\bs_2$ are the one-dimensional critical points
$$ \bs_1 = \left(\frac{3\alpha}{2},3(1-\alpha)\right) 
\qquad\text{and}\qquad 
\bs_2 = 
\left(3\alpha,\frac{3(1-\alpha)}{2}\right). $$
Exponentiating the heights of $\bs_1$ and $\bs_2$ gives the values
$$ h_1 = \frac{1}{3(1-\alpha)}\left(\frac{2(1-\alpha)}{\alpha}\right)^\alpha 
   \qquad\qquad h_2 = 
   \frac{2}{3(1-\alpha)}\left(\frac{1- \alpha}{2\alpha}\right)^\alpha \ , $$
and basic calculus shows 
$$
\begin{cases}
h_1 < h_2 &: \alpha \in (1/3,1/2) \\
h_1 = h_2 &: \alpha = 1/2 \\
h_1 > h_2 &: \alpha \in (1/2,2/3)
\end{cases}.
$$ 
Thus, Theorem~\ref{thm:mainASM} implies 
$$ a_{r,s} = \left ( \frac{1}{3\alpha} \right )^{r}
   \left(\frac{2}{3(1-\alpha)}\right)^s (r+s)^{-1/2} 
   \left(\frac{-3\sqrt{2}}{2\sqrt{(1-\alpha)\alpha\pi}} 
   + O\left((r+s)^{-1}\right)\right) $$
for $\alpha \in (1/3,1/2)$, while 
$$ a_{r,s} = \left ( \frac{2}{3\alpha} \right )^r
   \left ( \frac{1}{3(1-\alpha)} \right )^s (r+s)^{-1/2} 
   \left(\frac{3\sqrt{2}}{2\sqrt{(1-\alpha)\alpha\pi}} 
   + O\left((r+s)^{-1}\right)\right) $$
for $\alpha \in (1/2,1/3)$. When $\alpha=1/2$ dominant asymptotic
behaviour is given by the sum of these contributions, which cancel,
reflecting the fact that the main diagonal of $F(x,y)$ is zero
by symmetry.
\end{example}

\subsubsection*{Exponential growth rates and the annihilating ideal}

As we have seen, asymptotic growth of a coefficient 
sequence can be lower than that predicted by the 
highest height contributing points for some fixed direction. 
For this reason, it is useful to introduce two different directional
exponential rates.

\begin{defn} \label{def:rates}
The (limsup) \Em{exponential rate} of a sequence $a_{\bss}$
in the direction $\rr$ is the quantity 
$$\beta (\rr) := \limsup_{n \to \infty} \frac{1}{n} \log |a_{n \rhat}|,$$
while the (limsup) \Em{neighbourhood exponential rate} is
$$\obeta (\rr) := \limsup_{\substack{\bss \to \infty \\ \bss / |\bss| \to \rhat}} 
   \frac{1}{|\bss|} \log |a_{\bss}|.$$
The (neighbourhood) exponential growth rate of a generating function is
the (neighbourhood) exponential growth rate of its coefficient sequence.
\end{defn}

Using the limsup helps smooth behaviour because many combinatorial
sequences have periodicity, either exact with period $k$
(e.g., terms vanish for certain values of $n$ mod $k$) or via 
a phase term such as a factor of $\cos (n \theta \pi)$ for some,
possibly irrational, $\theta$.  
The reason to look at neighbourhood growth rates is explained by 
Example~\ref{ex:three}: for all the functions but the trivial case of $A(x,y)=1$
we saw that
$$\obeta (1,1) = h_{1/2,1/2} (\bs) = \log 4,$$
while the behavior precisely on the diagonal has
a polynomial drop (for $C$) or is identically
zero (for $D$).  

\begin{lemma}\label{lem:convo}
The neighbourhood exponential rate does not increase when multiplying $F(\zz)$ by
any Laurent polynomial $\kappa(\zz)$.
\end{lemma}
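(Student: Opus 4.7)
The plan is a direct convolution estimate. Write $\kappa(\zz) = \sum_{\bi \in I} c_\bi \zz^\bi$, where $I \subset \Z^d$ is a finite set of exponents. Then multiplication by $\kappa$ acts on coefficients by a finite shift-and-add: the coefficient sequence $b_\bss$ of $\kappa(\zz) F(\zz)$ satisfies
\[ b_\bss = \sum_{\bi \in I} c_\bi \, a_{\bss - \bi}. \]
Since $I$ is finite and $|\bi|$ is bounded uniformly in $\bi \in I$, each of the finitely many shifted indices $\bss - \bi$ has $|\bss - \bi| = |\bss| + O(1)$ and, as $\bss \to \infty$ with $\bss/|\bss| \to \rhat$, also $(\bss - \bi)/|\bss - \bi| \to \rhat$ uniformly in $\bi$. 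This is the content that makes the estimate work.

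Next I would fix $\epsilon > 0$ and invoke the defining property of the neighbourhood exponential rate: there exist $N$ and $\delta > 0$ so that $|a_\bss| \le e^{(\obeta(\rr) + \epsilon)|\bss|}$ for all $\bss$ with $|\bss| > N$ and $|\bss/|\bss| - \rhat| < \delta$. By the uniform convergence above, for $\bss$ sufficiently large with $\bss/|\bss|$ sufficiently close to $\rhat$, every shift $\bss - \bi$ ($\bi \in I$) also lies in this neighbourhood, so
\[ |a_{\bss - \bi}| \le e^{(\obeta(\rr) + \epsilon)|\bss - \bi|} \le e^{(\obeta(\rr) + \epsilon)|\bss|} \cdot e^{|\obeta(\rr) + \epsilon| \cdot \max_{\bi \in I}|\bi|}, \]
since $\bigl||\bss - \bi| - |\bss|\bigr| \le |\bi|$ regardless of the sign of $\obeta(\rr) + \epsilon$.

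The last step is the triangle inequality and logarithms. Summing against $|c_\bi|$ over $\bi \in I$ gives $|b_\bss| \le C_\epsilon \, e^{(\obeta(\rr) + \epsilon)|\bss|}$ for an $\bss$-independent constant $C_\epsilon$. Taking $\frac{1}{|\bss|} \log$ and letting $\bss \to \infty$ along directions approaching $\rhat$ yields $\obeta_{\kappa F}(\rr) \le \obeta(\rr) + \epsilon$. Since $\epsilon$ was arbitrary, the conclusion follows (with the usual convention handling the case $\obeta(\rr) = -\infty$, where one replaces $\obeta(\rr) + \epsilon$ by an arbitrary negative constant).

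There is essentially no obstacle here; the only thing to be mindful of is the two-sided nature of $|\bss - \bi| - |\bss|$, which forces the overhead factor $e^{|\obeta(\rr) + \epsilon| \cdot \max_\bi |\bi|}$ rather than a one-sided bound. This is a constant in $\bss$ and disappears after dividing by $|\bss|$, so it does not affect the limsup.
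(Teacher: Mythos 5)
Your proof is correct and takes the same approach as the paper, whose own proof is only a two-sentence sketch of exactly this convolution argument. You have simply filled in the details the paper omits (the two-sided triangle-inequality bound and the degenerate case $\obeta(\rr) = -\infty$), both handled correctly.
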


\begin{proof}
Multiplication by $\kappa$ translates into the convolution of the series $F$ with 
the Laurent polynomial $\kappa$. The claim follows as $\kappa$ has finite
support.
\end{proof}

\begin{rem}
The (non-neighbourhood) exponential rate of a sequence can change both increase
and decrease when multiplying its generating function by a polynomial. For instance, 
\begin{align*}
F(x,y) = \frac{1}{1-x-y} &\implies \beta(1,1) = \log 2 \\[+2mm]
F(x,y) = \frac{x-y}{1-x-y} &\implies \beta(1,1) = 0 \\[+2mm]
F(x,y) = \frac{x(x-y)}{1-x-y} &\implies \beta(1,1) = \log 2.
\end{align*}
\end{rem}

On a heuristic level, what is going on is that algebraic and analytic 
techniques can detect the neighbourhood rate better than they can detect 
delicate behaviors such as cancellation among sums of powers of algebraic
numbers. Algebraic cancellation\footnote{Topological cancellation,
which can also cause a drop in the limsup neighbourhood exponential rate,
is harder to study. See~\cite{BMP-lacuna} for one example of this.} 
can be detected using the following definition.

\begin{defn}[annihilating ideal] 
The \Em{annihilating ideal} of a flat $\sing_{k_1 , \dots , k_s}$ is
the polynomial ideal 
\begin{equation} \label{eq:Jideal} 
\mJ (L) := \langle \ell_{k_j}(\bz)^{p_{k_j}} : 
   j=1,\dots,s \rangle
\end{equation}
and the \Em{annihilating ideal} of a point $\bs$ on the stratum 
$\mS_{k_1,\dots,k_s}$ is $\mJ(\bs) := \mJ(\sing_{k_1,\dots,k_s})$.
\end{defn}

The annihilating ideal contains precisely the functions $G$ such that
the singular set of 
$\frac{G(\bz)}{\prod_{j=1}^m \ell_j(\bz)^{p_j}}$ does not contain 
$\mV_{k_1,\dots,k_s}$.  Indeed, if $G(\bz) \in \mJ(\bs)$ then 
$G(\bz)$ can be rewritten as a polynomial linear combination of the 
$\ell_{k_j}(\bz)^{p_{k_j}}$, meaning that
$F$ can be decomposed as a sum of meromorphic functions whose 
poles do not contain $L$.  For polynomials, the condition 
$G \in \mJ(\bs)$ can be verified automatically by, for instance, a 
Gr{\"o}bner basis computation. 

\begin{theorem} \label{th:ideal}
Consider a simple arrangement in a generic direction $\rr$,
let $\bs$ be a point in a stratum $S$ of codimension $s$ defined
by the vanishing of the linear functions $\ell_{k_1},\dots,\ell_{k_s}$,
and let $\mJ(\bs)$ be the annihilating ideal.  
\begin{enumerate}[(i)]
\item 
If $G \in \mJ(\bs)$ then the asymptotic series $\Phi_\bs (\rr)$ 
defined in Theorem \ref{thm:mainASM}
has exponential growth strictly less than $h_{\rr} (\bs)$.  
Hence, if all other critical points $\bs'$ with $h_{\rhat} (\bs')
\geq h_{\rhat} (\bs)$ are ruled out by Algorithm~\ref{alg:1} then
$$\overline{\beta} (\rhat) < h_{\rhat} (\bs) \, .$$
\item
If $G \notin \mJ (\bs)$ then $\Phi_\bs(\rr)$ has neighbourhood exponential 
growth at least $h_{\rr}(\bs)$.
In other words, if all critical points $\bs'$ with $h_{\rhat} (\bs')
> h_{\rhat} (\bs)$ are ruled out by Algorithm~\ref{alg:1} then
$$\obeta (\rhat) = h_{\rhat} (\bs) \, .$$
\end{enumerate}
\end{theorem}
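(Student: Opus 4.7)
The plan is to reduce both parts to a single local residue calculation at $\bs$. I would fix local coordinates $(w_1,\dots,w_s;u_1,\dots,u_{d-s})$ on a neighbourhood of $\bs$ with $w_j:=\ell_{k_j}(\zz)$ transverse to the flat $\sing_S$ and $u_1,\dots,u_{d-s}$ tangential; simplicity of $\A$ ensures these are genuine coordinates since the normals $\bb^{(k_j)}$ are linearly independent. In these coordinates the linking torus $\torus_{\bs}$ has a tube structure whose transverse factor is the small $s$-torus $\{|w_j|=\epsilon\}$, so the integral $\int_{\torus_{\bs}}F\,\zz^{-\rr-\one}\,d\zz$ factors as an iterated residue at $w=0$ followed by the tangential stationary-phase integration of Proposition~\ref{pr:partial}. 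With $K(\zz):=1/\prod_{l\notin S}\ell_l^{p_l}$ (analytic and nonvanishing near $\bs$) and $Z(\zz):=\zz^{-\rr-\one}$, the residue equals, up to a nonvanishing Jacobian,
\[
\Psi(u,\rr) \;=\; \frac{1}{\prod_{j}(p_{k_j}-1)!}\,\partial_w^{\pp-\one}\bigl[G\,K\,Z\bigr]\Big|_{w=0},
\]
to which stationary phase in $u$ is then applied.

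For part~(i), assume $G\in\mJ(\bs)=\langle w_1^{p_{k_1}},\dots,w_s^{p_{k_s}}\rangle$ and write $G=\sum_j q_j w_j^{p_{k_j}}$. Then $\partial_w^{\alpha}G|_{w=0}=0$ for every multi-index $\alpha$ with $\alpha_j\leq p_{k_j}-1$, because clearing the factor $w_j^{p_{k_j}}$ in the $j$-th summand requires at least $p_{k_j}$ derivatives in $w_j$. Expanding $\Psi$ by Leibniz yields a sum indexed by exactly such $\alpha$, so each term vanishes, $\Psi\equiv 0$, and $\int_{\torus_{\bs}}F\,\zz^{-\rr-\one}\,d\zz=0$ identically in $\rr$. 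Hence $\Phi_{\bs}\equiv 0$, which has exponential rate $-\infty<h_\rhat(\bs)$. The ``Hence'' clause follows by applying the same conclusion to every contributing point of height $\geq h_\rhat(\bs)$: the only surviving asymptotic contributions in the decomposition of Theorem~\ref{thm:mainASM} come from critical points of strictly smaller height.

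For part~(ii), assume $G\notin\mJ(\bs)$, so the class of $G$ in the local quotient $\mO_\bs/\mJ(\bs)_{\mathrm{loc}}$ is represented by a nonzero truncated Taylor polynomial $\overline{G}(u,w)=\sum_{\alpha\leq\pp-\one}g_\alpha(u)\,w^\alpha$ with some $g_{\alpha^*}\not\equiv 0$. Leibniz gives
\[
\Psi(u,\rr)\;=\;\sum_{\alpha\leq\pp-\one} c_\alpha\,g_\alpha(u)\,\partial_w^{\pp-\one-\alpha}[K\,Z]\bigr|_{w=0}
\]
with positive constants $c_\alpha$. Each factor $\partial_w^{\beta}Z|_{w=0}$, expanded via the chain rule through $z_i=z_i(w,u)$, is a polynomial in $\rr$ of total degree $|\beta|$ whose leading part is a product of rising factorials in the entries of $\rr$ with coefficient analytic and nonvanishing near $u=0$. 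Distinct $\beta=\pp-\one-\alpha$ therefore contribute linearly independent polynomial-in-$\rr$ pieces, so $\Psi(\cdot,\rr)\not\equiv 0$ unless every $g_\alpha\equiv 0$. Applying the stationary-phase expansion of Proposition~\ref{pr:partial}---nondegenerate by Proposition~\ref{pr:morse}---produces an asymptotic series for $\Phi_{\bs}(\rr)$ equal to $\bs^{-\rr}$ times a non-identically-vanishing polynomial-type factor in $\rr$ and $|\rr|^{-1}$. Since $|\bs^{-\bss}|$ has exponential rate converging to $h_\rhat(\bs)$ along $\bss/|\bss|\to\rhat$ and this factor is nonzero on an open set of directions, any sequence $\bss\to\infty$ with $\bss/|\bss|\to\rhat$ passing through that open set witnesses $\obeta(\rhat)\geq h_\rhat(\bs)$.

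The principal obstacle is the linear-independence claim in part~(ii): that the polynomials in $\rr$ given by $\{\partial_w^{\beta}Z|_{w=0}:\beta\leq\pp-\one\}$ are linearly independent relative to the coefficient functions $g_\alpha(u)$. I expect this to follow from the explicit degree computation sketched above---each such polynomial carries a distinguished leading monomial of degree $|\beta|$ with nonvanishing rising-factorial coefficient, so distinct $\beta$'s cannot cancel---or equivalently from the perfectness of the local Grothendieck residue pairing on the complete-intersection quotient $\mO_\bs/\mJ(\bs)_{\mathrm{loc}}$ combined with the fact that varying $\rr$ covers a Zariski-dense family in the relevant dual. A secondary care point is to ensure the tangential stationary phase does not accidentally annihilate the leading asymptotic coefficient, which is avoided by working in an open neighbourhood of $\rhat$ where Morse nondegeneracy and genericity preclude such coincidences.
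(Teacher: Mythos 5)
Your proposal takes a genuinely different route from the paper on both parts, and the two parts fare differently.

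\textbf{Part (i).} Your residue computation is correct: writing $G=\sum_j q_j w_j^{p_{k_j}}$ and applying Leibniz in each $w_j$ variable shows $\partial_w^{\pp-\one}[GKZ]|_{w=0}=0$, since fewer than $p_{k_j}$ derivatives in $w_j$ cannot clear the factor $w_j^{p_{k_j}}$. The conclusion $\Phi_\bs\equiv 0$ is actually \emph{stronger} than what the theorem asserts (exponential rate strictly below $h_{\rhat}(\bs)$), and it is correct: writing $F=\sum_j F_j$ with $F_j=q_j/\prod_{l\neq k_j}\ell_l^{p_l}$, each $F_j$ has no pole on $\ell_{k_j}$, and in $\torus_\bs$ the $2^s$ signed imaginary fibers pair up across $\ell_{k_j}$ with opposite signs; the homotopy between paired fibers lies in the complement of the singular set of $F_j$, so the paired integrals cancel exactly. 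The paper instead argues that each $F_j$ can be deformed to a lower-height contour, yielding exponential decay; your route gives a tighter statement. Be careful, though, with the phrase ``factors as an iterated residue followed by stationary phase'': the linking torus is a signed sum of full imaginary fibers, not literally a product of an $s$-torus with a tangential disk, so the factorization must be justified by the fiber-pairing (or Stokes) argument rather than asserted.

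\textbf{Part (ii).} Here there is a genuine gap, and you correctly flag it. Two separate issues arise. First, the linear-independence claim for $\{\partial_w^\beta Z|_{w=0}:\beta\leq\pp-\one\}$ as polynomials in $\rr$ (relative to the coefficient functions $g_\alpha(u)$) is not established; distinct $\beta$ with $|\beta|=|\beta'|$ yield polynomials of the same degree, and the ``rising factorial'' leading parts can coincide or cancel against the unknown $g_\alpha$'s, so the heuristic does not obviously close. Second, and more importantly, even granting $\Psi(\cdot,\rr)\not\equiv 0$ as a function of $(u,\rr)$, the stationary-phase expansion for $\Phi_\bs$ depends only on the jet of $\Psi$ at the actual critical point $u=0$ for the given direction; that jet could vanish even when $\Psi$ is not identically zero. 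Your final paragraph gestures at moving to nearby directions, but it does not deliver the precise mechanism.

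The paper's proof sidesteps both problems cleanly. It first reduces to the simple-pole case $\bpt=\bone$ via Lemma~\ref{lem:convo} (multiplying $F$ by a Laurent monomial $\vv^\mm$ cannot raise the neighbourhood exponential rate), which eliminates all but the single coefficient $c_\bzer(\uu)$ from the decomposition~\eqref{eq:G_decomp}. Then it uses that $c_\bzer$ is analytic: either $c_\bzer\equiv 0$ (forcing $G\in\mJ(\bs)$, contradiction), or $c_\bzer(\uu)\neq 0$ for some $\uu$ arbitrarily close to $\bzer$. Because the critical point $\bs(\rr')$ is Morse-nondegenerate (Proposition~\ref{pr:morse}) and so moves smoothly with the direction, there is a direction $\rr'$ arbitrarily close to $\rr$ for which the leading coefficient in Proposition~\ref{pr:partial} is nonzero, exactly what the definition of $\obeta$ needs. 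I would encourage you to adopt that reduction; it replaces the delicate linear-independence and jet-vanishing concerns by a single analyticity argument.
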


\begin{proof}
First we prove $(i)$. If $G = \sum_{j=1}^s f_j \ell_j^{p_j}$
where the $f_j$ are locally analytic functions on a neighbourhood of $\sigma$
then we can write $F$ as a sum of meromorphic functions whose pole sets
no longer contain the stratum $S$. In generic directions this implies
that $\Phi_\bs(\rr)$ can be written as a sum of integrals over imaginary
fibers with basepoints arbitrarily close to critical points of 
lower height than $h_{\rr}(\bs)$, giving the stated result.

To prove $(ii)$ we make an analytic change of coordinates $\zz=(\uu,\vv)$ 
by setting $v_j = \ell_{k_j}(\zz)$ for $j=1,\dots,s$ and letting
$\uu$ be $d-s$ remaining coordinates parametrizing the flat $L$ containing $S$
near $\bss$. We can rewrite $G$ in these coordinates as
  \begin{equation}\label{eq:G_decomp}
G=\sum_{\mm\in\N^s : m_j < p_j} c_\mm(\uu)\vv^\mm + \sum_{j=1}^s v_j^{p_j}\tilde{c}_j(\uu,\vv),
  \end{equation}
where each $c_\mm(\uu)$ is an analytic function on $L$ and each 
$\tilde{c}_j(\uu)$ is an analytic function in a vicinity of $\bs$ in~$\Comp^d$.

Reasoning as in part $(i)$, the contribution of each term in the second
sum of~\eqref{eq:G_decomp} has an exponential rate strictly
less than $h_{\rhat} (\bs)$, while the first sum of~\eqref{eq:G_decomp} identically
vanishes if and only if $G$ lies in the ideal $\mJ(\bs)$.

Assume first that $\bpt = \bone$, so $\bs$ is a simple pole and
the first sum in \eqref{eq:G_decomp} contains 
only the term $c_\bzer(\uu)$. If $c_\bzer(\bzer)\neq0$ then the leading
term of $\Phi_\bs(\rr)$ given by Proposition~\ref{pr:partial} is nonzero,
and we have found the correct exponential rate $h_{\rhat}(\bs)$. Similarly, if
$c_\bzer(\uu)\neq0$ for some point $\uu$ in any sufficiently small neighbourhood
of the origin then, because the non-degenerate critical point $\bs(\br)$ 
on the stratum $S$ varies smoothly
with $\rr$, the neighbourhood exponential rate is $h_{\rhat}(\bs)$.
The only other case occurs when $c_\bzer$ vanishes in a neighbourhood
of zero, but then $c_\bzer$ is identically zero by analyticity, meaning 
$G \in \mJ(\bs)$.

Now let $\bpt$ be general and recall
Lemma \ref{lem:convo}, which states that multiplying $F$ by any 
Laurent polynomial $\bv^\mm$ does not increase 
its neighbourhood exponential growth.
If $c_\bzer$ is not identically zero then the argument from
the simple pole case implies that the neighbourhood exponential rate is 
$h_{\rhat}(\bs)$, and we are done. If $c_\bzer$ is identically zero then
multiplying $F$ by $\vv^{\bpt-\bone-\be_k}$,
where $\be_k$ has a 1 in coordinate $k$ and 0
elsewhere, gives a sum of rational functions which, in $(\uu,\vv)$ coordinates,
is $\frac{c_{\be_k}(\uu)}{v_1\cdots v_s}$ plus a sum of terms that each miss
at least one variable $v_j$ from their denominator, and thus have exponentially
smaller asymptotic growth. Applying the argument from the simple pole case 
to each $\frac{c_{\be_k}(\uu)}{v_1\cdots v_s}$ shows that if $c_{\mm}$
is not identically zero for some vector $\mm$ with $|\mm|=1$ then
the neighbourhood exponential rate is 
$h_{\rhat}(\bs)$. If all terms $c_{\mm}$ with $|\mm|=1$ are identically zero, 
then the argument can be repeated on the terms $c_{\mm}$ with $|\mm|=2$, and
so on until the result is established.
\end{proof}

\section{Non-Simple Arrangements in Generic Directions}
\label{sec:nonsimple-generic}

Any rational function whose singular set forms a non-simple hyperplane
arrangement can be decomposed into a finite sum of rational
functions, each of which has a singular set that defines a simple 
hyperplane arrangements. To make this explicit we use a known, canonical partial 
fraction expansion in terms of the
no-broken-circuit basis for the matroid $\{ \bb^{(1)} , \ldots , \bb^{(k)} \}$
for each flat with some codimension $\ell$ and $k \geq \ell$ hyperplanes.
We begin with an example.

\begin{example}
\label{ex:nonSimp}
Consider $F(x,y) = 1/(\ell_1\ell_2\ell_3)$ as shown in 
Figure~\ref{fig:nonSimp}, where
$$\ell_1(x,y) = 1-x/3-2y/3 \qquad\ell_2(x,y) = 1-2x/3-y/3 
   \qquad\ell_3(x,y) = 1-3x/5-2y/5 \, . $$

\begin{figure}
\centering
\includegraphics[width=0.8\linewidth]{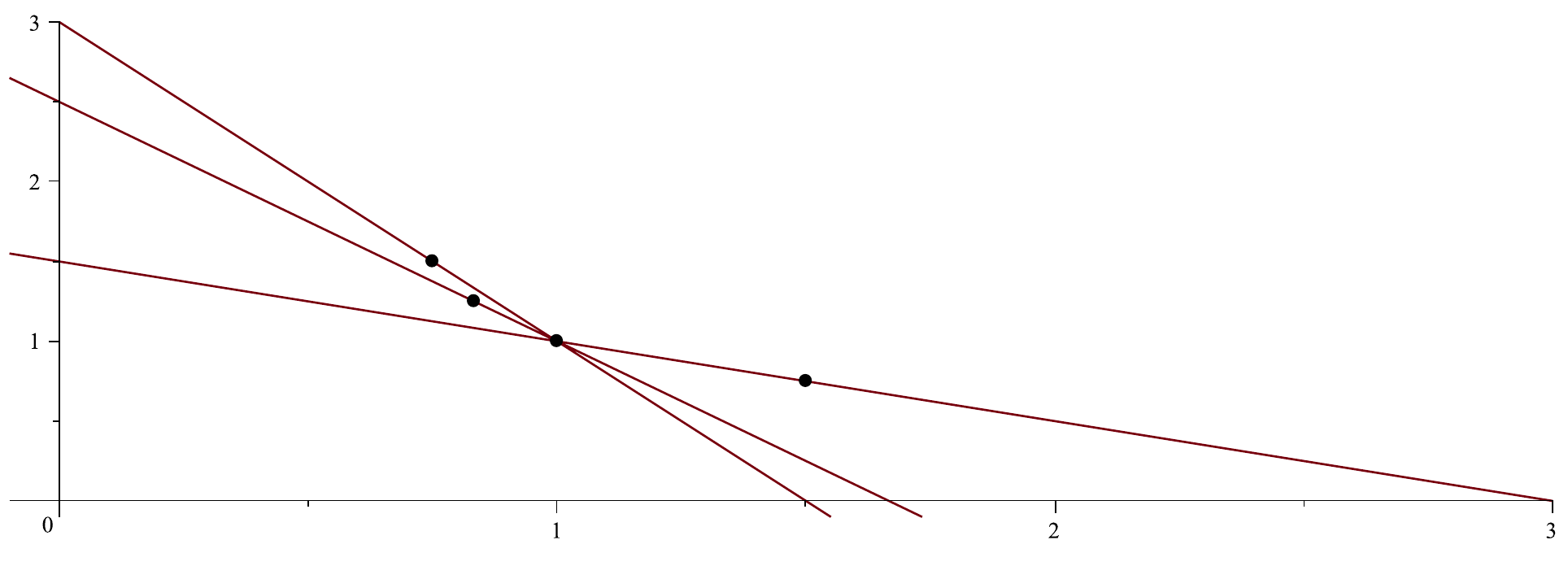}
\caption{Real singularities and contributing points of the rational 
function in Example~\ref{ex:nonSimp}. All three factors intersect at the 
point $(1,1)$, meaning $F$ is not simple.}
\label{fig:nonSimp}
\end{figure}

In the main diagonal direction $\br=(1,1)$, $F$ admits contributing points 
$(3/2,3/4),(3/4,3/2),$ and $(5/6,5/4)$ on the flats $\mV_1,\mV_2,$ and 
$\mV_3$, respectively, and contributing point $(1,1)$ which is the single 
point in all the flats $\mV_{1,2},\mV_{1,3},\mV_{2,3},$ and 
$\mV_{1,2,3}$. 
As the three lines $\ell_1,\ell_2,$ and $\ell_3$ in two dimensions have common 
intersection point $(1,1)$, they are linearly dependent. 
Basic linear algebra allows one to derive
$$ (1/5)\ell_1(x,y) + (4/5)\ell_2(x,y) =\ell_3(x,y) \, .$$
Dividing this equation by $\ell_3(x,y)$ gives
$$ (1/5)\frac{\ell_1(x,y)}{\ell_3(x,y)} 
   + (4/5)\frac{\ell_2(x,y)}{\ell_3(x,y)} = 1, $$
so that 
$$ F(x,y) = \frac{1}{\ell_1(x,y)\ell_2(x,y)\ell_3(x,y)} = 
\underbrace{\frac{1/5}{\ell_2(x,y)\ell_3(x,y)^2}}_{F_1} + 
\underbrace{\frac{4/5}{\ell_1(x,y)\ell_3(x,y)^2}}_{F_2}.
$$
Both $F_1(x,y)$ and $F_2(x,y)$ are simple, so we can apply 
Theorem~\ref{thm:mainASM} to obtain dominant asymptotics. For instance, 
in the direction $\br = (1,1)$ the function $F_2$ admits the 
point $(1,1)$ as its contributing singularity determining dominant 
asymptotics, while $(1,1)$ is not a contributing singularity of $F_1$. 
The contributing singularity of $F_1$ which determines dominant 
asymptotics is $(5/6,5/4)$, meaning the diagonal coefficients of $F_1$ 
decay exponentially. Thus, Theorem~\ref{thm:mainASM} gives
\begin{align*} 
[x^ny^n]F(x,y) &= (1/5)[x^ny^n]F_1(x,y) + (4/5)[x^ny^n]F_2(x,y) \\[+2mm]
&= \frac{15n}{4}\left(1+O\left(\frac{1}{n}\right)\right).
\end{align*}
\end{example}

To generalize this approach, we introduce some definitions.  The {\bf support} 
of a rational function is the set of divisors appearing 
in the denominator when in lowest terms.  In other words, 
$$ \sup\left(\frac{G(\bz)}{\ell_1(\bz)^{q_1} \cdots \ell_m(\bz)^{q_m}}\right) 
   := \left\{\ell_j(\bz) : q_j > 0 \right\}$$
when no $\ell_j$ divides $G$.

As in the theory of matroids, we call any minimal linearly dependent 
set $\{\ell_{i_1}, \dots,\ell_{i_s}\}$ a {\bf circuit}. 
A {\bf broken circuit} is the independent collection obtained from any 
circuit by removing the element $\ell_j$ with largest index $j$.  
A collection is said to be {\bf $\chi$-independent} if it does not 
contain a broken circuit; note that any $\chi$-independent set is also 
linearly independent. 

In Example~\ref{ex:nonSimp} the set $\{\ell_1,\ell_2,\ell_3\}$ is the only 
circuit, so $\{\ell_1,\ell_2\}$ is the only broken circuit. Note that the 
supports of the two rational functions which we analyze, $\ell_1\ell_3^2$ and 
$\ell_2\ell_3^2$, correspond to $\chi$-independent sets $\{\ell_1,\ell_3\}$ and 
$\{\ell_2,\ell_3\}$. This is no coincidence: 
Proposition~\ref{prop:SimpDecompAlg} below shows that one can always 
decompose a rational function into a sum of rational functions whose 
supports are $\chi$-independent, and the following shows in general one 
can make no further simplifications.

\begin{proposition} \label{prop:SimpDecompAlg} 
Let $\ell_1, \ldots , \ell_m$ be any $m$ linear functions of 
$d$ variables.
\begin{enumerate}[(i)]
\item The set of rational functions 
$$\left\{ \frac{1}{\ell_{i_1}(\bz) \cdots\ell_{i_s}(\bz)} : 
   \{\ell_{i_1},\dots,\ell_{i_s}\} \text{ is $\chi$-independent} \right\} $$
is linearly independent over $\C$.
\item The span over $\C$ of the rational functions
$$\left\{ \frac{1}{\ell_{i_1}(\bz)^{p_1} \cdots \ell_{i_s}(\bz)^{p_s}} : 
   \{\ell_{i_1},\dots,\ell_{i_s}\} \text{ is $\chi$-independent} 
   \mbox{ and } \sum_{i=1}^s p_i = M \right\} $$
contains the inverses of all products of the $\ell_j$ over multisets 
of cardinality $M$.
\item
Algorithm~\ref{alg:SimpleDecomp} terminates in a finite number of steps 
and outputs a sum of rational functions whose supports contain no broken 
circuits.
\end{enumerate}
\end{proposition}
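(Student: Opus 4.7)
The plan is to prove the three parts in the order (iii), (ii), (i): parts (iii) and (ii) are constructive via the algorithm, while (i) is an independent independence claim. For (iii), the workhorse is the first while loop of Algorithm~\ref{alg:SimpleDecomp}. Given a summand $\tilde F$ whose support contains a broken circuit $\{\ell_{i_1},\dots,\ell_{i_{s-1}}\}$, completed to a circuit by some $\ell_{i_s}$ with $i_s$ the maximum index, minimality of the circuit gives a relation $\sum_{j=1}^s a_j \ell_{i_j}=0$ with all $a_j\neq 0$, hence $\ell_{i_s}=\sum_{j<s}\mu_j\ell_{i_j}$ and $1=\sum_{j<s}\mu_j\ell_{i_j}/\ell_{i_s}$. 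Multiplying $\tilde F$ by this identity produces summands in which the multiplicity of $\ell_{i_j}$ in the denominator is decremented by one (possibly removing it from the support) and the multiplicity of $\ell_{i_s}$ is incremented by one. Since the total denominator degree is preserved and $i_j<i_s$, the reverse-index multiplicity vector $(q_m,q_{m-1},\dots,q_1)$ of the denominator strictly increases in lexicographic order at each substitution. As this vector ranges over a finite set of nonnegative integer vectors with fixed total sum, termination of the first loop follows. The second while loop strictly reduces the total degree of some denominator per pass, so it also terminates. Every output summand has $\chi$-independent support by the terminal condition.

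For (ii), I would take the target function $1/(\ell_{i_1}^{p_1}\cdots\ell_{i_s}^{p_s})$ of total degree $M$ and apply only the first while loop. Each substitution multiplies by the scalar $1$ and hence preserves total degree, with every new summand again of the form $1/\prod \ell^{q_j}$. The termination argument from (iii) halts the process with every summand's support $\chi$-independent and of total degree $M$, yielding the claimed spanning statement.

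Part (i) is the nontrivial independence statement: up to affine normalization it is the no-broken-circuit basis theorem for the Orlik--Solomon-type algebra of the matroid determined by the $\ell_j$. My approach would be induction on $m$ via deletion--restriction. Given an alleged vanishing relation $\sum_I c_I/\prod_{j\in I}\ell_j=0$ summed over $\chi$-independent $I\subseteq\{1,\dots,m\}$, I would split according to whether $I$ contains the maximal index $m$. The terms with $m\notin I$ form a relation in the deletion arrangement $\{\ell_1,\dots,\ell_{m-1}\}$; the terms with $m\in I$ admit a residue at $\ell_m=0$ (multiply by $\ell_m$ and restrict to the hyperplane) yielding a relation in the restriction arrangement among the sets $I\setminus\{m\}$. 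Both inductive hypotheses then force the respective coefficients to vanish. The main obstacle, and where I would invest the most care, is the bookkeeping that $\chi$-independent sets containing $m$ correspond bijectively under $I\mapsto I\setminus\{m\}$ to $\chi$-independent sets of the restriction arrangement; this compatibility is the content of standard matroid arguments~\cite{brylawski-NBC,OrlikTerao1992}, which I would adapt to the affine setting at hand or cite directly.
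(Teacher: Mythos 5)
Your proposal is correct and follows essentially the same approach as the paper: parts (ii) and (iii) are proved via the base-exchange identity obtained from a minimal linear dependence (a circuit), with termination of the first loop by a lexicographic monotonicity argument on the (fixed-total-degree) denominator exponent vector and termination of the second loop by a total-degree drop, and part (i) rests on the standard no-broken-circuit basis theorem (the paper simply cites \cite{OrlikTerao1992} and \cite{PW-book}, while you additionally sketch the deletion--restriction induction behind that citation). The only cosmetic difference is that you phrase the monovariant as reverse-index lex \emph{increasing}, where the paper uses ordinary lex \emph{decreasing}; these are equivalent since in each exchange the index whose multiplicity drops is strictly smaller than the one whose multiplicity rises.
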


\begin{proof}
The first conclusion follows from standard results on hyperplane 
arrangements; see Orlik and 
Terao~\cite[Theorems 3.43, 3.126,  5.89]{OrlikTerao1992} or Pemantle 
and Wilson~\cite[Prop. 10.2.10]{PW-book}. 

To prove the second, given any linear dependence 
$$ 0 = a_1\ell_{i_1}(\bz) + \cdots + a_s\ell_{i_s}(\bz) $$
with each $a_i \neq 0$, one can divide by any $a_jl_{i_j}(\bz)$ and then 
by some product of all $\ell_k(\bz)$ to obtain
\begin{equation}
\frac{1}{\bl^{\bq}} = \frac{1}{\ell_1(\bz)^{q_1} \cdots\ell_m(\bz)^{q_m}} = 
\sum_{k \neq j} \frac{(-a_k/a_j)}{\bl^{\bq+\be^{(i_j)}-\be^{(i_k)}}}
\label{eq:linDependence}
\end{equation}
for any $\bq \in \N^m$, where $\be^{(\kappa)}$ is the elementary 
basis vector with a one in the $\kappa$th coordinate and all other 
coordinates zero. 

Given any rational function $F(\bz) = G(\bz)/\bl^{\bpt}$ whose support 
contains a broken circuit $\{i_1,\dots,i_s\}$ with indices in increasing 
order, one can apply the base exchange equation~\eqref{eq:linDependence} 
with any $i_j > i_s$ such that $\{i_1 , \dots , i_s , i_j\}$ 
is linearly dependent; such an $i_j$ exists by the definition of 
a broken circuit.  At each step, every vector $\bq+e_{i_j}-e_{i_k}$ on 
the right hand side of Equation~\eqref{eq:linDependence} has smaller 
lexicographical order than $\bq$.  Therefore, repeating the process 
formalized in Algorithm~\ref{alg:SimpleDecomp}, one must eventually 
arrive at an expression for $F$ as a sum of rational functions whose 
supports contain no broken circuits.   
The second while loop terminates because any time a summand 
$\tilde{F}$ is further decomposed into several summands, the supports of 
these new rational functions are subsets of the support of $\tilde{F}$, 
and the new functions have denominators of smaller total degree than the 
denominator of $\tilde{F}$. 
This proves part~$(iii)$, and thereby, part~$(ii)$.  
\end{proof}

\begin{example} \label{ex:nbc}
If
$$ \ell_1 = 1 - 2x-2y+2z, \qquad\ell_2 = 1-2x, \qquad\ell_3 = 1-2y, 
   \qquad\ell_4 =  1-2z, \qquad\ell_5 = 1-x-y  $$
then the circuits are the sets
$$ \{\ell_1,\ell_2,\ell_3,\ell_4\}, \quad \{\ell_1,\ell_4,\ell_5\}, 
   \quad \{\ell_2,\ell_3,\ell_5\}, $$
and the broken circuits are the sets
$$ \{\ell_1,\ell_2,\ell_3\}, \quad \{\ell_1,\ell_4\}, 
   \quad \{\ell_2,\ell_3\} \, . $$
The collection of $\chi$-independent sets is 
$$ \varnothing $$ 
$$ \{\ell_1\} \quad \{\ell_2\} \quad \{\ell_3\} \quad \{\ell_4\} 
   \quad \{\ell_5\} $$
$$ \{\ell_1,\ell_2\} \quad \{\ell_1,\ell_3\} \quad \{\ell_1,\ell_5\} 
   \quad \{\ell_2,\ell_4\} 
   \quad \{\ell_2,\ell_5\} \quad \{\ell_3,\ell_4\} \quad \{\ell_3,\ell_5\} 
   \quad \{\ell_4,\ell_5\} $$
$$ \{\ell_1,\ell_2,\ell_5\} \quad \{\ell_1,\ell_3,\ell_5\} 
    \quad \{\ell_2,\ell_4,\ell_5\} \quad \{\ell_3,\ell_4,\ell_5\} \, . $$
Consider the rational function
$$ F(x,y,z) = \frac{1}{\ell_1(x,y)l_2(x,y)l_3(x,y)l_4(x,y)l_5(x,y)^2} \, .$$ 
Running Algorithm~\ref{alg:SimpleDecomp} gives the decomposition
\begin{align*}
F(x,y,z) 
= &\quad \frac{1}{4\ell_2(x,y)\ell_4(x,y)\ell_5(x,y)^4} 
+ 
\frac{1}{4\ell_3(x,y)\ell_4(x,y)\ell_5(x,y)^4}  \\
&+ 
\frac{1}{4\ell_1(x,y)\ell_2(x,y)\ell_5(x,y)^4} 
+ 
\frac{1}{4\ell_1(x,y)\ell_3(x,y)\ell_5(x,y)^4}.
\end{align*}
Each of these summands is now simple, and we may use our Maple implementation 
of the results in Section~\ref{sec:simplegeneric} to find asymptotics in 
direction $\br=(1,2,3)$, say. The summands 
have dominant coefficient asymptotics 
\[\left(\frac{81\sqrt{42}}{224}\right) 54^n n^{5/2}, \qquad (2/3) \, 
64^n n^3 , \qquad \frac{-(-27648)^n}{5\pi n}, \qquad 
\frac{-(-27648)^n}{4\pi n},\]
respectively, so that 
\[ [x^ny^{2n}z^{3n}]F(x,y,z) = \frac{-9(-27648)^n}{20\pi n}
\left(1 + O\left(\frac{1}{n}\right)\right). \]
\end{example}

Part~$(i)$ of Theorem~\ref{th:ideal} continues to hold for 
non-simple arrangements.  In particular, there is an ideal $\mJ$ such that
$G \in \mJ$ implies $G / \prod_{j=1}^m \ell_j^{m_j}$ decomposes into
a sum of rational functions whose denominators have poles on
at most $k-1$ hyperplanes, $k$ being the co-dimension of the
flat containing a given critical point.  Hence for $G \in \mJ$, 
$G/\prod_{j=1}^m \ell_j^{m_j}$ will have neighbourhood exponential rate
$\obeta (\rhat) < h_{\rhat} (\bs)$, provided no other critical
point produces a contribution with exponential rate $h_{\rhat} (\bs)$.
The converse, however, fails for non-simple arrangements,
as shown by the following example.

\begin{example} \label{eg:4 lines}
Let $\ell_j = y - 1 - \lambda_j (x-1)$ for $1 \leq j \leq 4$ 
be four real lines through $\bs=(1,1)$, shown in Figure~\ref{fig:4 lines},
whose normals have positive slopes $\nu_j := -1/\lambda_j$ that 
decrease in $j$. 

\begin{figure}
\centering
\includegraphics[height=2in]{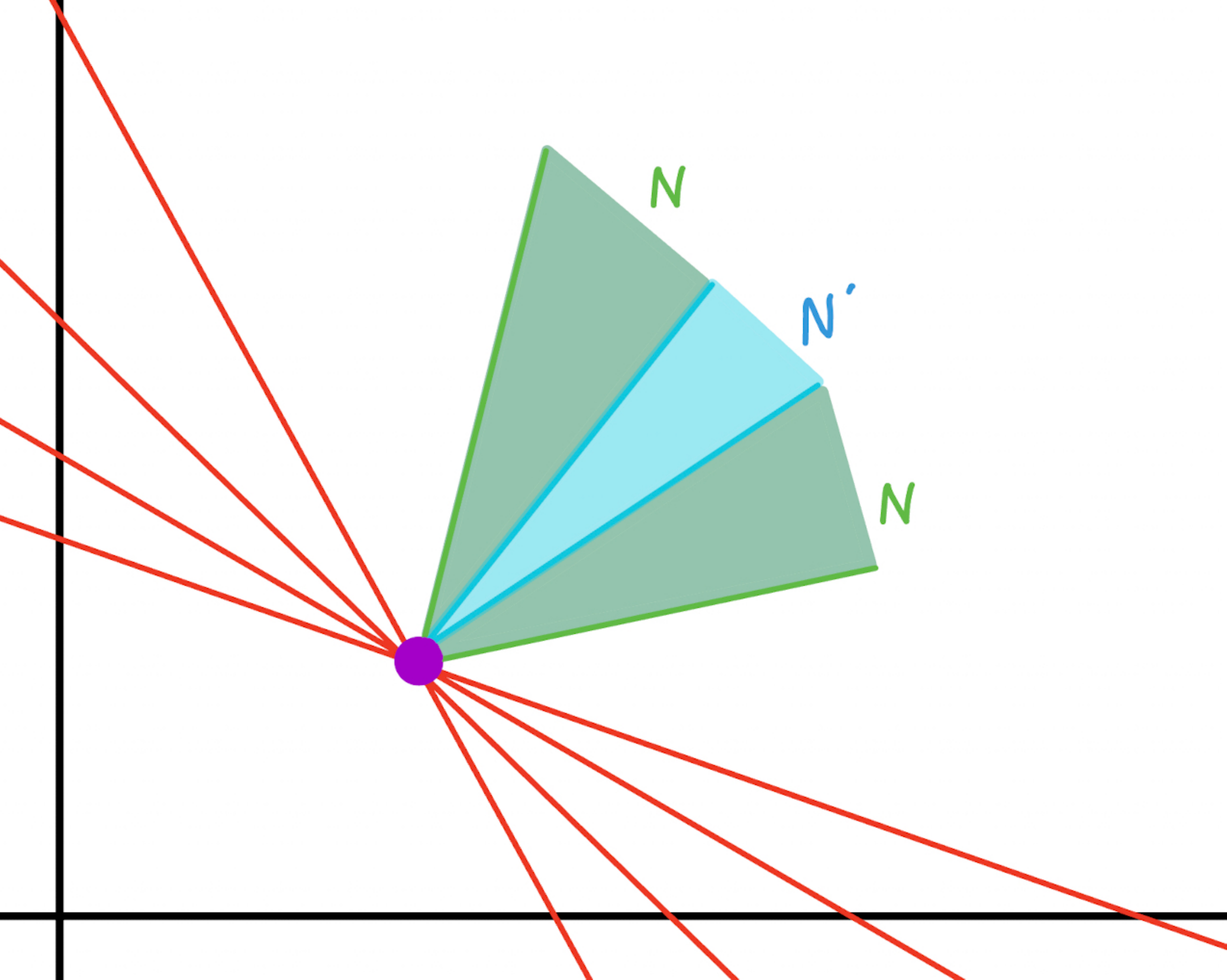}
\caption{A non-simple arrangement where the exponential rate drops within
a hole $N'$ of the normal cone~$N$ at $\bs=(1,1)$.}
\label{fig:4 lines}
\end{figure}

The normal cone at $\bs$ contains the lines with slopes from
$\nu_4$ to $\nu_1$.  Let $G(x,y) := \ell_1(x,y) 
\ell_2(x,y) + \ell_3(x,y) \ell_4(x,y)$, so
$$\frac{G(x,y)}{\prod_{j=1}^4 \ell_j(x,y)} = \frac{1}{\ell_1(x,y) \ell_2(x,y)} + 
   \frac{1}{\ell_3(x,y) \ell_4(x,y)} \, .$$
As we know from the simple case, this has an exponential rate of zero 
in directions with slopes in the two intervals $[\nu_1 , \nu_2]$ 
and $[\nu_3 , \nu_4]$, where $\bs$
is the contributing singularity determining asymptotics, and
a negative exponential rate elsewhere.  In particular, 
the exponential rate is negative along directions $\rhat$ with slopes in 
$(\nu_2 , \nu_3)$, which are generic and have 
$\obeta (\rhat) < 0 = h_{\rhat} (\bs)$.  
On the other hand, the numerator 
$\ell_1 \ell_3 + \ell_2 \ell_4$ leads to two non-canceling
contributions rather than to none.  Thus, any algebraic
test for the numerator to create a drop in the neighbourhood exponential
rate must take into account specific normal cones.  While this may
seem trivial in low dimensions, computation of the chamber decomposition 
of the normal cone at $\bs$ is in general a high complexity computation, 
and the description of the set of numerators for which the neighbourhood
exponential rate drops is correspondingly difficult to compute.
\end{example}

\section{Simple Arrangements in Non-Generic Directions}
\label{sec:simple-nongeneric}
We now discuss the analysis in non-generic directions.
First, we see how asymptotics behave in an exact non-generic
direction. Then, we study transitions in asymptotic
behaviour around non-generic directions. 

\subsection{Exact non-generic directions} \label{ss:exact}
Fix a direction $\brhat$ and suppose $F$ admits a unique contributing singularity 
$\bs$ of highest height, contained in the stratum $\mS_{1,\dots,t}$. 
Suppose $\bs$ is a non-generic direction such that
\begin{equation} (-\nabla h)(\bs) = a_1\bb^{(1)} + \cdots 
+ a_s\bb^{(s)} + 0\cdot\bb^{(s+1)} + \cdots + 0\cdot\bb^{(t)} 
\label{eq:nonGenGradSetup} \end{equation}
for some $s<t$, where each $a_j\neq0$ (and thus $a_j>0$ by the 
definition of contributing point). We work in the 
simple case, so $\bb^{(1)},\dots,\bb^{(t)}$ are linearly 
independent, and up to permuting coordinates we may assume that the matrix
\[ M = \begin{pmatrix} \bb^{(1)} \\ \vdots \\ \bb^{(t)} \\ 
\be^{(t+1)} \\ \vdots \\ \be^{(d)} \end{pmatrix} \]
has full rank, where $\be^{(j)}$ denotes the $j$th elementary basis vector. 

Because $\bs$ is minimal and has highest height, 
asymptotic growth of $[\bz^{\br}]F(\bz)$ is determined, up to an 
exponentially negligible error term, by the integral
\[ I = \frac{1}{(2\pi i)^d}\int_{\bs-\epsilon\bm+i\R^d} 
\frac{\tG(\bz)}{\ell_1(\bz)^{p_1}\cdots\ell_t(\bz)^{p_t}} 
\, \frac{d\bz}{\bz^{\br}}, \]
where $\bm = M^{-1}\left(\be^{(1)}+\cdots+\be^{(t)}\right)$ and 
\[ \tG(\bz) = \frac{G(\bz)}{(z_1\cdots z_d)
\prod_{j>t}\ell_j(\bz)^{p_j}}. \]
Making the substitution $\bw = M(\bs-\bz)$ gives
\begin{equation} I = \frac{1}{|\det M|(2\pi i)^d}
\int_{\epsilon(\bone_t,\bzer)+i\R^d} 
\frac{\tG(\bs-M^{-1}\bw)}{w_1^{p_1}\cdots w_t^{p_t}\left(\bs-M^{-1}\bw\right)^{\br}}
\, d\bw, \label{eq:nonGenOrigin} \end{equation}
where $\bone_t$ denotes the $t$-dimensional all ones vector. 
Since the $a_j$ in~\eqref{eq:nonGenGradSetup} are positive,
replacing the domain of integration in~\eqref{eq:nonGenOrigin}
by any of the $2^s-1$ imaginary fibers with 
basepoints $(\pm\bone_{s},\bone_{t-s},\bzer)$ not equal to 
$(\bone_t,\bzer)$ results in
an integral of exponentially smaller growth\footnote{If $\brhat$ were
a generic direction, we would be able to add all $2^t$ fibers 
and use univariate residues will get rid of all $w_j$ in the integrand 
denominator.}. Taking a signed sum of these integrals thus introduces
an exponentially negligible error and results 
in a $(d-s)$-dimensional integral obtained by taking
univariate residues in the variables $w_1,\dots,w_s$ at the origin.  

We can be most explicit when $\bpt=\bone$, when the residues of
$w_1,\dots,w_s$ at the origin are obtained by removing the factor
$(w_1\cdots w_s)$ from the denominator and setting these variables
equal to zero in the remaining expression. If 
$\bpt=\bone$ and we make the change of variables 
$y_j=i w_{s+j}$ for $j=1,\dots,d-s$, 
then dominant asymptotics of $[\bz^{\br}]F(\bz)$ are 
given by the integral
\begin{equation} 
I' := \frac{(-1)^{t-s}}{|\det M|(2\pi)^{d-s} \, i^{t-s}}
\int_{\R^{d-s}+i\epsilon(\bone_{t-s},\bzer)} 
\frac{\tG\left(\bs+iM^{-1}\begin{psmallmatrix}\bzer\\\by\end{psmallmatrix}\right)}
{y_1\cdots y_{t-s}}\;e^{-\br \cdot \log\left(\bs+iM^{-1}
\begin{psmallmatrix}\bzer\\\by\end{psmallmatrix}\right)}d\by, 
\label{eq:nonGenAfterRes} \end{equation}
where $\by=(y_1,\dots,y_{t-s})$. For general $\bpt$, 
repeated differentiation shows~\cite[Theorem 10.2.6]{PW-book}
that the residue has the form
\[ \left.\frac{\tG(\bs-M^{-1}\bw)}{w_{s+1}^{p_{s+1}}\cdots 
w_t^{p_t}\left(\bs-M^{-1}\bw\right)^{\br}} 
\; P(\br,\bw) \right|_{w_j=0,1\leq j\leq s}, \]
where $P$ is a multivariate polynomial in $\br$ 
of degree $p_1+\cdots+p_s-s$ having leading term
\[ R(\bw)=\prod_{k=1}^s \frac{1}{(p_k-1)!}\left( \sum_{j=1}^d 
\frac{r_jM^{-1}_{jk}}{(\bs-M^{-1}\bw)_j} \right)^{p_k-1} .\]
Thus,
\begin{align} 
I &=  \frac{1}{|\det M|(2\pi i)^{d-s}\prod_{k=1}^s(p_k-1)!}
\int_{\epsilon(\bone_{t-s},\bzer)+i\R^{d-s}} 
\left.\frac{\tG(\bs-M^{-1}\bw)R(\bw)}{w_{s+1}^{p_{s+1}}\cdots 
w_t^{p_t}\left(\bs-M^{-1}\bw\right)^{\br}} 
\; \right|_{w_j=0,1\leq j\leq s}d\bw
\left(1+O\left(\frac{1}{|\br|}\right)\right) \notag \\[+2mm]
&=  \frac{i^{p_{s+1}+\cdots+p_t}}{|\det M|(2\pi)^{d-s}\prod_{k=1}^s(p_k-1)!}
\int_{\R^{d-s}+i\epsilon(\bone_{t-s},\bzer)} 
\frac{\tG\left(\bs+iM^{-1}\begin{psmallmatrix}\bzer\\\by\end{psmallmatrix}\right)
R(\bzer,-i\by)}{y_{s+1}^{p_{s+1}}\cdots 
y_t^{p_t}
\left(\bs+iM^{-1}\begin{psmallmatrix}\bzer\\\by\end{psmallmatrix}\right)^{\br}}d\by 
\left(1+O\left(\frac{1}{|\br|}\right)\right),
\label{eq:nonGenAfterResP}
\end{align}
where $\by=(y_1,\dots,y_{t-s})$ and $|\br|=r_1+\cdots+r_d$.

\begin{example}
\label{ex:Ex1NonGen}
Consider again the function 
\[ F(x,y) = \frac{1}{\left(1 - \frac{2x+y}{3}\right)\left(1 - 
\frac{x+2y}{3}\right)}. \]
The sequence
\begin{equation} [x^{2n}y^{n}]F(x,y) = \frac{1}{(2\pi i)^2} \int_{\mT} 
\frac{1}{\left(1 - \frac{2x+y}{3}\right)\left(1 - \frac{x+2y}{3}\right)} 
\, \frac{dxdy}{x^{2n+1}y^{n+1}} \label{eq:nonGenEx} \end{equation}
is in the non-generic direction $\rr=(2,1)$, meaning its asymptotic 
behaviour can be quantitatively different than what happens when $1/3<\hat{r}_1 
< 2/3$ (limit to a constant with exponential error) and $2/3 < \hat{r}_1 < 1$ 
(limit to zero exponentially with polynomial error).  The highest contributing 
singularity is now $\bs=\bs_{1,2}=(1,1)$ on the stratum $\mS_{1,2}$, 
which now also coincides with the critical point $\bs_1$ on the 
flat $\sing_1$.

Since there are
no bounded components of $\mMR$ in any quadrant except the positive quadrant,
the Cauchy integral in~\eqref{eq:nonGenEx} equals 
\begin{align*} 
[x^{2n}y^n]F(x,y) &= \frac{1}{(2\pi i)^2} \int_{(\epsilon,\epsilon)+i\R^2} 
\frac{1}{\left(1 - \frac{2x+y}{3}\right)\left(1 - \frac{x+2y}{3}\right)} 
\, \frac{dxdy}{x^{2n+1}y^{n+1}} \\
&= \frac{1}{(2\pi i)^2} \int_{(1-\epsilon,1-\epsilon)+i\R^2} 
\frac{1}{\left(1 - \frac{2x+y}{3}\right)\left(1 - \frac{x+2y}{3}\right)} 
\, \frac{dxdy}{x^{2n+1}y^{n+1}}.
\end{align*}
Making the change of variables described above shows
\[
[x^{2n}y^{n}]F(x,y) = \frac{3}{(2\pi i)^2} \int_{(\epsilon,\epsilon) + 
i\R^2}  \frac{1}{w_1w_2} \, \frac{dw_1dw_2}{(1-2w_1+w_2)^{2n+1}(1+
w_1-2w_2)^{n+1}}.\]
If
\[ \tilde{\omega} = \frac{1}{w_1w_2} \, \frac{dw_1dw_2}{(1-2w_1+w_2)^{2n+1}(1+
w_1-2w_2)^{n+1}}, \]
then 
\[[x^{2n}y^{n}]F(x,y) = \frac{3}{(2\pi i)^2} \left[ 
\int_{(\epsilon,\epsilon) + i\R^2}  \tomega - \int_{(-
\epsilon,\epsilon) + i\R^2}  \tomega \right] + O(\tau^n)\]
for some $\tau \in (0,1)$. Taking the residue in $w_1$ at the origin then yields
\begin{align*} 
[x^{2n}y^n]F(x,y) &= \frac{3}{2\pi i} \int_{\epsilon + i\R}  
\frac{1}{w_2} \, \frac{1}{(1+w_2)^{2n+1}(1-2w_2)^{n+1}}dw_2 + O(\tau^n) \\[+2mm]
&= \frac{-3}{2\pi i} \int_{\R+i\epsilon}  \frac{1}{y(1+iy)(1-2iy)} 
e^{-n\left[2\log(1+iy) + \log(1-2iy)\right]}dy + O(\tau^n).
\end{align*}
\end{example}

The idea behind an asymptotic analysis is to replace the analytic functions
in the integrand under consideration with the leading terms of their
power series expansions at the origin. Doing this rigorously
requires extending the asymptotic bounds of~\cite[Chapter 5]{PW-book}
to integrals which are singular at the origin due to a division
by monomial terms. When $(-\nabla h_{\br})(\bs)$ lies on a $k$-dimensional facet 
of $N(\bs)$, asymptotics of the dominant term in the residue integral
are obtained by analyzing an integral of the form
\[ \int_{\R^k + i\bepsilon} \, \by^{\cc} \cdot e^{-\by^t A \by} 
d\by, \]
where $A$ is a $k \times k$ positive-definite matrix and $\cc \in 
\Z^k$.  This can be viewed as a Gaussian \emph{negative}-moment 
integral; although positive moments ($\cc \in \N^k$) of the 
Gaussian distribution can be determined through the Wick-Isserlis 
Theorem~\cite[Theorem 3.2.5]{LandoZvonkin2004}, such negative moments do 
not seem to have received previously attention in the literature.  

We postpone the general discussion of this theory to future
work and focus here on the $s=d-1$ codimension 1 case, where it is easy
to be explicit. In this setting, asymptotics boil down to 
an analysis of the integral
\[ \int_{\R+i\epsilon}  \frac{e^{-at^2}}{t^r} dt, \]
which is characterized by the following result.

\begin{proposition} \label{prop:1DNegGauss}
Let $r$ be a positive integer and $a\geq0$.  Then
\[ \int_{\R+i\epsilon}  \frac{e^{-at^2}}{t^r} dt = \frac{(-
i)^ra^{(r-1)/2} \pi}{\, \Gamma\left(\frac{r+1}{2}\right)} \, .\]
\end{proposition}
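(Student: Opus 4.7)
The plan is to reduce the integral to the normalized case $a=1$ via a scaling substitution, verify two base cases directly, establish a reduction-in-$r$ recursion via integration by parts, and induct to match the closed form.

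First, for $a>0$, I would substitute $u = \sqrt{a}\,t$ in
\[ I_r(a) := \int_{\R+i\epsilon}\frac{e^{-at^2}}{t^r}\,dt, \]
observing that the shifted contour $\R+i\epsilon$ transforms to $\R + i\epsilon\sqrt{a}$, which is homotopic to $\R+i\epsilon$ in the complement of the single pole at $0$. This yields $I_r(a) = a^{(r-1)/2}I_r(1)$, so it suffices to prove $I_r(1) = (-i)^r\pi/\Gamma((r+1)/2)$. The case $a=0$ is immediate: for $r\geq 2$ the integrand is absolutely integrable with no poles above the contour, so closing in the upper half plane gives $0$ (matching $a^{(r-1)/2}\big|_{a=0}=0$), while $r=1,\,a=0$ is handled by the base case below.

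Next I would compute two base cases. For $r=0$, $I_0(1) = \int_\R e^{-t^2}\,dt = \sqrt{\pi} = \pi/\Gamma(1/2)$. For $r=1$, write
\[ \frac{e^{-t^2}}{t} = \frac{e^{-t^2}-1}{t} + \frac{1}{t}. \]
The first summand extends analytically across $t=0$ to an odd, integrable function; deforming $\R+i\epsilon$ to $\R$ (no singularities in between) shows its integral vanishes by odd symmetry. The second gives
\[ \int_{\R+i\epsilon}\frac{dt}{t} = \lim_{R\to\infty}\log\!\left(\frac{R+i\epsilon}{-R+i\epsilon}\right) = -i\pi \]
using the principal branch, matching $-i\pi/\Gamma(1)$.

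The heart of the argument is the recursion. For $r\geq 2$, integration by parts with $u = e^{-t^2}$ and $dv = t^{-r}\,dt$ yields
\[ I_r(1) = \left[-\frac{e^{-t^2}}{(r-1)t^{r-1}}\right]_{-\infty+i\epsilon}^{+\infty+i\epsilon} - \frac{2}{r-1}\int_{\R+i\epsilon}\frac{e^{-t^2}}{t^{r-2}}\,dt = -\frac{2}{r-1}\,I_{r-2}(1), \]
the boundary terms vanishing because of the Gaussian factor. Combining this with the two base cases and the identities $\Gamma((r+1)/2) = \tfrac{r-1}{2}\Gamma((r-1)/2)$ and $(-i)^r = -(-i)^{r-2}$, a two-step induction (one for odd $r$ starting from $r=1$, one for even $r$ starting from $r=0$) delivers the claimed closed form.

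The only real subtlety is the $r=1$ base case: the integral is not absolutely convergent and its value $-i\pi$ depends entirely on the $i\epsilon$ prescription that places the contour above the pole at the origin. Once that step is made rigorous (via the explicit splitting above and a careful limit in $R$), the rest of the proof is routine bookkeeping with the Gamma function recursion.
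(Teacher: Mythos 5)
Your proof is correct, but it takes a genuinely different route from the paper. The paper handles odd and even $r$ by two separate arguments: for odd $r$ it scales to $a=1$, then uses the oddness of the integrand together with Cauchy's theorem to replace $\R+i\epsilon$ by a half-circle about the origin, and integrates the power series of $e^{-t^2}$ term-by-term; for even $r$ it instead differentiates under the integral sign with respect to $a$, producing the first-order ODE $J_r'(a) = -J_{r-2}(a)$ in $a$, and solves the resulting recurrence with initial data $J_r(0)=0$ (for $r\geq 2$) and $J_0(a)=\sqrt{\pi/a}$. Your approach is more unified: you scale to $a=1$ once and for all, prove two base cases ($r=0,1$), and recurse in $r$ itself via integration by parts. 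This is arguably cleaner --- a single induction rather than two parallel arguments with different mechanisms --- and the verification that the integration-by-parts recursion $I_r(1) = -\tfrac{2}{r-1}I_{r-2}(1)$ matches $\Gamma\bigl(\tfrac{r+1}{2}\bigr) = \tfrac{r-1}{2}\Gamma\bigl(\tfrac{r-1}{2}\bigr)$ and $(-i)^r = -(-i)^{r-2}$ is indeed routine. The paper's even-$r$ argument has the mild advantage of never needing to make sense of a conditionally convergent tail.

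One small imprecision to tighten: you assert that $(e^{-t^2}-1)/t$ is ``integrable,'' but it decays only like $-1/t$ at infinity, so it is not absolutely integrable and the deformation of $\R+i\epsilon$ to $\R$ followed by an appeal to odd symmetry needs care. The fix is standard: work with $[-R,R]+i\epsilon$, note that the two vertical endpoint segments contribute $O(\epsilon/R)$, deform to $[-R,R]$ (the integrand is entire), observe the integral over the symmetric interval vanishes by oddness, and then let $R\to\infty$. With this adjustment, your $r=1$ base case is rigorous, and the rest of your argument stands.
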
 

\begin{proof}
Suppose that $r$ is odd and $a=1$.  Then as the integrand under consideration 
is odd, Cauchy's Integral Theorem implies
\[ \int_{\R+i\epsilon}  \frac{e^{-t^2}}{t^r} dt
= -\int_{-\R+i\epsilon}  \frac{e^{-t^2}}{t^r} dt 
= -\int_{\mC} \frac{e^{-t^2}}{t^r} dt, \]
where $\mC$ is the positively oriented unit half-circle above the 
$x$-axis.  As $\mC$ is a compact domain of integration, Fubini's Theorem 
implies that we can expand the integrand as a power series, integrate 
each term using an explicit parametrization, and sum to obtain  
\[ \int_{\mC}  \frac{e^{-at^2}}{t^r} dt = \frac{(\pi i)(-1)^{(r-
1)/2}}{\left((r-1)/2\right)!} = \frac{(-1)^{r+1}i^ra^{(r-1)/2} \pi}{\, 
\Gamma\left(\frac{r+1}{2}\right)} \, . \]
The case when $r$ is odd and $a$ is arbitrary is reduced to this case by 
a linear change of variables. 

Now suppose that $r$ is even, and let $J_r(a) := \int_{\R+i\epsilon}  
\frac{e^{-at^2}}{t^r} dt.$ Differentiating under the integral sign with 
respect to $a$ implies
\[ (\partial J_r/\partial a)(a) = \int_{\R+i\epsilon}  -
\frac{e^{-at^2}}{t^{r-2}} dt = -J_{r-2}(a) \]
for $r \geq 2$, while
\[  J_r(0) = \int_{\R+i\epsilon}  \frac{1}{t^r} dt = 0 \]
when $r \geq 2$ and  
\[ J_0(a) = \int_{\R+i\epsilon} e^{-at^2} dt = \sqrt{\pi/a}. \]
Solving this recurrence implies
\[  J_r(a) = \frac{(-1)^{r/2}a^{(r-1)/2}\sqrt{\pi}}{(r/2-1/2)(r/2-3/2) 
\cdots (1/2)} = \frac{(-1)^ri^ra^{(r-1)/2} \pi}{\, 
\Gamma\left(\frac{r+1}{2}\right)} \,, \]
as the difference equation satisfied by the gamma function shows 
\[\Gamma\left(n + \frac{1}{2}\right) = \frac{(2n-1)(2n-3) \cdots 
(3)(1)}{2^n}\Gamma\left(\frac{1}{2}\right) = \frac{(2n-1)(2n-3) \cdots 
(3)(1)}{2^n}\sqrt{\pi}\] 
for any non-negative integer $n$.
\end{proof}

This allows for the following asymptotic determination.

\begin{proposition}
\label{prop:nonGenDir}
Suppose $F(\bz)$ is simple and $\br$ is a non-generic direction with a 
unique contributing singularity of maximal height where $G(\bs) \neq 0$. 
If $(-\nabla h_{\br})(\bs)$ lies on the codimension 1 face of $N(\bs)$ 
with
\[ (-\nabla h_{\br})(\bs) = a_1 \cdot \bb^{(1)} + \cdots + a_{d-1} \cdot 
\bb^{(d-1)} + 0 \cdot \bb^{(d)} \]
for $\bs$ in the stratum defined by $\ell_1=\cdots=\ell_d=0$, then as 
$\br\rightarrow\infty$,
\[ [\bz^{\br}]F(\bz) = \bs^{-\br} \cdot \left(C + O\left(\frac{1}{n}\right)\right), \]
where 
\[ C = \prod_{j=1}^{d-1} \frac{\left(\begin{pmatrix}\frac{r_1}{\sigma_1} 
& \cdots & \frac{r_d}{\sigma_d} \end{pmatrix} \cdot M^{-1}\right)_j^{p_j-
1}}{(p_j-1)!} \cdot \frac{G(\bs)}{\prod_{j>d}\ell_j(\bs)^{p_j}} \cdot 
\frac{(\bq^T\bq/2)^{(p_d-1)/2}}
{2(\sigma_1 \cdots \sigma_d) \, |\det M| \, \Gamma\left(\frac{p_d+1}{2}\right)}, \]
$M$ is the matrix with rows $\bb^{(1)},\dots,\bb^{(d)}$, and $\bq$ is
the rightmost column of the matrix
\[ 
Q = \begin{pmatrix} 
\sqrt{r_1}/\sigma_1 & 0 & \bzer & 0 \\[+1mm]
0 & \sqrt{r_2}/\sigma_2 & \bzer & 0 \\[+1mm]
\bzer & \bzer & \ddots & \bzer \\[+1mm]
0 & 0 & \bzer & \sqrt{r_d}/\sigma_d
\end{pmatrix} M^{-1}.
\]

If $\br=n\brhat$ as $n\rightarrow\infty$ then the order
of growth of $[\bz^{\br}]F(\bz)$ is $\bs^{-\br}n^{p_1+\cdots+p_{d-1}+p_{d}/2-
d+1/2}$. In the simple pole case, when $\bpt=\bone$, the leading constant here is 
half what it would be if $\br$ was generic (i.e., if $(-\nabla 
h_{\br})(\bs)$ was in the interior of $N(\bs)$).
\end{proposition}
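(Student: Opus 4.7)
The plan is to start from the integral representation derived in Section~\ref{ss:exact} and specialize it to the codimension-one boundary setting. Here we have $t=d$, $s=d-1$, and $t-s = d-s = 1$, so equation~\eqref{eq:nonGenAfterResP} collapses to a one-dimensional integral of the form
\[
I = \frac{i^{p_d}}{|\det M|\,(2\pi)\prod_{k=1}^{d-1}(p_k-1)!}\int_{\R+i\epsilon}\frac{\tG\!\left(\bs+iM^{-1}\begin{psmallmatrix}\bzer\\y\end{psmallmatrix}\right)R(\bzer,-iy)}{y^{p_d}\left(\bs+iM^{-1}\begin{psmallmatrix}\bzer\\y\end{psmallmatrix}\right)^{\br}}\,dy
\]
up to a multiplicative factor $1+O(1/|\br|)$ and an exponentially smaller correction.

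Next, I would extract the dominant asymptotic factor $\bs^{-\br}$ and perform a Taylor expansion of the remaining integrand around $y=0$. Writing out the logarithm coordinate-wise, the exponent becomes
\[
-\br\cdot\log\!\left(\bs+iM^{-1}\begin{psmallmatrix}\bzer\\y\end{psmallmatrix}\right) = -\br\cdot\log\bs - iy\bigl(\bv\cdot M^{-1}_{\cdot,d}\bigr)-\tfrac{y^2}{2}\sum_{j=1}^d \tfrac{r_j(M^{-1})_{jd}^{\,2}}{\sigma_j^2}+O(y^3),
\]
where $\bv=(r_1/\sigma_1,\ldots,r_d/\sigma_d)=-\nabla h_{\br}(\bs)$. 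The key observation is that the linear term vanishes: by hypothesis $\bv=a_1\bb^{(1)}+\cdots+a_{d-1}\bb^{(d-1)}$, and the $d$-th column of $M^{-1}$ is orthogonal to $\bb^{(1)},\ldots,\bb^{(d-1)}$, so $\bv\cdot M^{-1}_{\cdot,d}=0$. This is precisely what makes $y=0$ a genuine stationary point. The quadratic coefficient is exactly $\bq^T\bq/2$ with $\bq$ the last column of $Q=\mathrm{diag}(\sqrt{r_j}/\sigma_j)M^{-1}$, so the exponential factor becomes $\bs^{-\br}e^{-y^2\bq^T\bq/2}(1+O(y^3))$. The analytic factors $\tG$ and $R$ can be replaced by their values at $y=0$, with $\tG(\bs)=G(\bs)/\bigl((\sigma_1\cdots\sigma_d)\prod_{j>d}\ell_j(\bs)^{p_j}\bigr)$ and $R(\bzer,0)$ producing the product $\prod_{j=1}^{d-1}(\bv M^{-1})_j^{\,p_j-1}/(p_j-1)!$ that appears in $C$.

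Having reduced the integral to the Gaussian negative-moment form, the third step is to apply Proposition~\ref{prop:1DNegGauss} with $r=p_d$ and $a=\bq^T\bq/2$, which yields
\[
\int_{\R+i\epsilon}\frac{e^{-(\bq^T\bq/2)y^2}}{y^{p_d}}\,dy \;=\; \frac{(-i)^{p_d}\,(\bq^T\bq/2)^{(p_d-1)/2}\,\pi}{\Gamma((p_d+1)/2)}.
\]
The factors $i^{p_d}$ and $(-i)^{p_d}$ combine to $1$, and assembling all pieces (together with the $2\pi$ in the prefactor absorbing a factor of $\pi$ to leave a $1/2$) produces exactly the claimed constant $C$. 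For the final assertion about the simple-pole case, specialize to $\bpt=\bone$: then $(\bq^T\bq/2)^0=1$, $\Gamma(1)=1$, and the $R$-product is empty, so $C=G(\bs)/\bigl(2(\sigma_1\cdots\sigma_d)|\det M|\prod_{j>d}\ell_j(\bs)^{p_j}\bigr)$; comparing with the generic codimension-$d$ formula $\bs^{-\br}G(\bs)/|\det\Gamma|$ from Proposition~\ref{pr:complete} (and noting $|\det\Gamma|=(\sigma_1\cdots\sigma_d)|\det M|$ in these coordinates) exhibits the factor of $1/2$.

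The main obstacle is the rigorous justification of replacing the integrand's analytic factors by their Taylor data in the presence of the $1/y^{p_d}$ singularity at the stationary point. Standard saddle-point approximations assume smooth integrands, so one needs the extension of the estimates in~\cite[Chapter 5]{PW-book} to Gaussian integrals with monomial singularities, which the authors flag as work in progress; the codimension-one case is tractable because the contour $\R+i\epsilon$ already avoids the pole and one can split the integral into a fixed neighbourhood of the origin (where the Taylor expansion and Proposition~\ref{prop:1DNegGauss} apply with uniform control) plus a tail whose contribution is exponentially negligible in $|\br|$.
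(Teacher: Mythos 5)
Your proposal follows essentially the same path as the paper: specialize \eqref{eq:nonGenAfterResP} to the one remaining variable, identify the Taylor data $A(y)=C_1+O(y)$ and $\psi(y)=\log\bs+(\bq^T\bq/2)y^2+O(y^3)$, observe the linear term vanishes (your explanation via $\bb^{(j)}\cdot(M^{-1})_{\cdot,d}=\delta_{jd}$ is a nice explicit justification the paper leaves implicit), and evaluate using Proposition~\ref{prop:1DNegGauss}. The ``main obstacle'' you flag---rigorously replacing $A$ and $\psi$ by their leading Taylor data despite the $1/y^{p_d}$ singularity---is precisely what the paper handles in Lemma~\ref{lem:BigONegGauss}, whose proof executes exactly the strategy you sketch: prune tails using a shrinking window $(-n^{-2/5},n^{-2/5})$, approximate on the window, then add the tails back. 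So the proposal is correct; its one unfilled step is not a wrong turn but the part the paper packages into a separate lemma that you did not have in view.
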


\begin{proof}
In this one-dimensional case,
the (now univariate) integral in~\eqref{eq:nonGenAfterResP} equals
\[ I' = 
\int_{\R+i\epsilon} 
\frac{\tG\left(\bs+iM^{-1}\begin{psmallmatrix}\bzer\\y\end{psmallmatrix}\right)}{y_d^{p_d}
\left(\bs+iM^{-1}\begin{psmallmatrix}\bzer\\y\end{psmallmatrix}\right)^{\br}}
\prod_{k=1}^{d-1} \left( \sum_{j=1}^d 
\frac{r_jM^{-1}_{jk}}{(\bs+M^{-1}\begin{psmallmatrix}\bzer\\y\end{psmallmatrix})_j} \right)^{p_k-1} dy.
\]
Note that 
\[ I' = 
\int_{\R+i\epsilon} \frac{A(y)}{y^{p_d}}e^{-\psi(y)}dy,
\]
where $A(y)$ and $\psi(y)$ are analytic functions at the origin with 
\begin{align*} 
A(y) &= \underbrace{\tG(\bs)\prod_{j=1}^{d-1} \left(\begin{pmatrix}\frac{r_1}{\sigma_1} 
& \cdots & \frac{r_d}{\sigma_d} \end{pmatrix} \cdot M^{-1}\right)_j^{p_j-
1}}_{C_1} \,+\, O(y) \\
\psi(y) &= \br\cdot\log(\bs+M^{-1}\begin{psmallmatrix}\bzer\\y\end{psmallmatrix})= 
\log(\bs) + \underbrace{(\bq^T\bq/2)}_{C_2}y^2 + O(y^3).
\end{align*}
The desired result is given by an application of the following lemma. 
\end{proof}

\begin{lemma}
\label{lem:BigONegGauss}
For some $\epsilon>0$ and $k\in\N$ let 
\[ I = \int_{\R+i\epsilon} y^{-k}A(y)e^{-n\psi(y)}, \]
where
\begin{itemize}
	\item $A,\psi$ are analytic in $\R+(-2\epsilon,2\epsilon)i \subset\C$;
	\item $A(y) = a_0 + O(y)$ and $\psi(y) = b_2y^2+O(y^3)$ at 0;
	\item $\psi'(y)=0$ implies $y=0$;
	\item $\Re(\psi)\geq0$ with equality only
	when $y=0$, and $|\Re(\psi)|\rightarrow\infty$ as $|y|\rightarrow\infty$.
\end{itemize}
Then 
\[ I \sim a_0\int_{\R+i\epsilon}y^{-k}e^{-nb_0y^2}dy = \frac{a_0(-i)^k(nb_0)^{(k-1)/2}}{\Gamma(\frac{k+1}{2})}. \]
\end{lemma}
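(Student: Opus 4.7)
The plan is a classical localization-plus-rescaling argument, essentially a singular analogue of Laplace's method, where the $y^{-k}$ factor is harmless because the contour $\R + i\epsilon$ stays bounded away from the origin.

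First, I would localize to a small neighbourhood of $y=0$. Fix $\delta > 0$ small enough that $A$ and $\psi$ are analytic on $\{|y| < 2\delta,\ |\Im y| < 2\epsilon\}$ and that the Taylor expansions $A(y) = a_0 + O(y)$, $\psi(y) = b_2 y^2 + O(y^3)$ are valid there. Using the assumptions that $\Re\psi \ge 0$ on the real line with equality only at the origin, that $\Re\psi \to \infty$ at infinity, and that $\psi' = 0$ only at $0$, a standard compactness argument gives a constant $c_\delta > 0$ such that $\Re\psi(y) \ge c_\delta$ for all $y \in (\R + i\epsilon) \setminus \{|y| < \delta\}$. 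On $|y| \ge \delta$ the factor $|y^{-k}|$ is bounded by $\delta^{-k}$, and $|A(y)|$ grows at most polynomially, so the tail contribution is $O(e^{-nc_\delta/2})$, which is negligible compared to the claimed asymptotic of order $n^{(k-1)/2}$.

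Next, on the localized piece I would rescale $y = u/\sqrt{n}$. The contour $\{y \in \R + i\epsilon : |y| < \delta\}$ becomes $\{u \in \R + i\epsilon\sqrt{n} : |u| < \delta\sqrt{n}\}$, and since $dy = du/\sqrt n$ while $y^{-k} = n^{k/2} u^{-k}$, a factor of $n^{(k-1)/2}$ comes out of the integral. On this rescaled contour one has pointwise convergence $A(u/\sqrt n) \to a_0$ and $n\psi(u/\sqrt n) = b_2 u^2 + O(|u|^3/\sqrt n) \to b_2 u^2$. Using the Taylor estimate $|\psi(u/\sqrt n) - b_2 u^2/n| \le C|u|^3/n^{3/2}$ for $|u| \le \delta\sqrt n$, one shows $|e^{-n\psi(u/\sqrt n)}| \le e^{-(b_2/2)|u|^2}$ once $|u|$ is small enough compared to $\sqrt n$, and separately handles the regime $|u| \ge \eta\sqrt n$ by the localization bound already established. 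This produces an integrable dominating function $C|u|^{-k} e^{-(b_2/2)|u|^2}$ on $\R + i\epsilon$ after shifting the contour back down (see below), so dominated convergence gives
\[ I = n^{(k-1)/2} \int_{\R + i\epsilon} u^{-k} a_0 e^{-b_2 u^2}\,du \cdot (1 + o(1)), \]
and Proposition~\ref{prop:1DNegGauss} evaluates the remaining integral.

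The main subtlety is justifying the contour manipulation after rescaling: the rescaled domain is the high contour $\R + i\epsilon\sqrt n$, which must be deformed back to a fixed $\R + i\epsilon$ before taking $n\to\infty$. This is permitted because the approximant $a_0 u^{-k} e^{-b_2 u^2}$ is holomorphic in the open upper half-plane and decays faster than any polynomial as $|\Re u| \to \infty$ within a strip, so Cauchy's theorem lets one slide the contour between the two heights. The same argument applies to the full integrand $u^{-k} A(u/\sqrt n) e^{-n\psi(u/\sqrt n)}$ on the truncated piece $|u| < \delta\sqrt n$, where one picks up compensating rectangular end-contributions that are themselves exponentially small by the localization bound. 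Once the contour is fixed, dominated convergence is routine and finishes the proof.
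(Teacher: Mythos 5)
Your argument is correct, but it takes a genuinely different route from the paper's. The paper keeps the contour at $\R + i\epsilon$ while letting $\epsilon$ depend on $n$: it fixes $\epsilon = 1/\sqrt{n}$, truncates at $|y| < n^{-2/5}$, kills the far tail by repeated integration by parts and the near tail by the substitution $w = nK_2 t^2$, then replaces $A$ and $\psi$ by their leading Taylor terms on the core with an explicit error estimate, and finally adds the Gaussian tails back. No rescaling or contour deformation occurs; the $n$-dependent choice of $\epsilon$ is exactly what keeps $|e^{-nb_2 y^2}|$ bounded on the contour so the direct estimates close. You instead keep $\epsilon$ fixed, localize using the hypotheses on $\Re\psi$, rescale $y = u/\sqrt{n}$, slide the resulting contour $\R + i\epsilon\sqrt{n}$ down to $\R + i\epsilon$ by Cauchy's theorem (permissible since $u=0$ is never crossed and the vertical end-segments sit where $\Re\psi$ is uniformly positive, so they are exponentially small), and then invoke dominated convergence. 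Both are standard saddle-point strategies; the paper's is more elementary and self-contained, while yours makes the Gaussian limit appear as a literal pointwise limit. Two points worth tightening in your write-up: (i) the pointwise convergence statement only becomes meaningful \emph{after} the contour shift, since before shifting the contour itself moves with $n$, so the logical order is localize, rescale, shift, then DCT; and (ii) the dominating function $C|u|^{-k}e^{-(b_2/2)|u|^2}$ only controls the integrand for $|u| \leq c\sqrt{n}$ with $c$ small — for $c\sqrt{n} \leq |u| \leq \delta\sqrt{n}$ you cannot dominate uniformly in $n$ by this Gaussian, and that outer annulus must be discarded by the direct exponential bound from localization rather than absorbed into the DCT step, as you gesture at but should make explicit.
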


\begin{proof}
For $n>0$ let $\mC_1 = (-n^{-2/5},n^{-2/5})$ and $\mC_2 = \R\setminus\mC_1$, so that
\[ I = \int_{\mC_1+i\epsilon} y^{-k}A(y)e^{-n\psi(y)}dy 
+ \int_{\mC_2+i\epsilon} y^{-k}A(y)e^{-n\psi(y)}dy. \]
The hypotheses on the 
real part of $\psi$ imply that the integrand under 
consideration decays exponentially as $y$ grows, meaning the value of
$I$ is independent of $\epsilon>0$. For the rest of this proof we 
take $\epsilon=\epsilon(n)=1/\sqrt{n}$.

\emph{Step 1 (Prune Tails).} 
Our first goal is to show that the integral over $\mC_2+i\epsilon$ is negligible.
To do this we bound the integral over $(n^{-2/5},\infty)+i\epsilon$, with the 
integral over $(-\infty,-n^{-2/5})+i\epsilon$ bounded with an analogous argument.
Since $\psi'(y)=0$ has no solution other than $y=0$, 
repeatedly integrating by parts shows that for any fixed $a>0$
\[ \int_{(a,\infty)+i\epsilon} y^{-k}A(y)e^{-n\psi(y)}dy \]
decays to zero faster than any negative integer power of $n$
(see, for instance, the argument in Stein~\cite[Section VIII.1.1]{Stein1993}).
Because of the power series expansions of $A$ and $\psi$ at the
origin, there exists $\sigma>0$ and $K_1,K_2>0$ such that for
all $|t|\leq\sigma$ and $n$ sufficiently large,
\[ |A(t+i\epsilon)|\leq K_1 \quad \text{and}\quad 
\Re(\psi(t+i\epsilon)) \geq K_2\Re((t+i\epsilon)^2) = K_2(t^2-\epsilon^2). \]
When $n$ is sufficiently large then $n^{-2/5}<\sigma$, and we see that
\[ \left| \int_{(n^{-2/5},\infty)+i\epsilon} y^{-k}A(y)e^{-n\psi(y)}dy 
- \int_{(n^{-2/5},\sigma)+i\epsilon} y^{-k}A(y)e^{-n\psi(y)}dy \right|\rightarrow0 \]
faster than any negative integer power of $n$.
Furthermore, with $\epsilon=1/\sqrt{n}$ we have
\[ \left|\int_{(n^{-2/5},\sigma)+i\epsilon} y^{-k}A(y)e^{-n\psi(y)}dy\right|
\leq K_1 \int_{n^{-2/5}}^{\sigma} \left|t+i\epsilon\right|^{-k}e^{-nK_2(t^2-\epsilon^2)}dt
\leq K_1e^{K_2} \int_{n^{-2/5}}^{\sigma} t^{-k}e^{-nK_2t^2}dt,
\]
and making the change of variables $w=nK_2t^2$ implies
\begin{align*}
\int_{n^{-2/5}}^{\sigma} t^{-k}e^{-nK_2t^2}dt 
&=O\left(n^{(k-1)/2}\int_{K_2n^{1/5}}^{\infty} w^{-(k+1)/2}e^{-w}dw \right)\\
&=O\left(n^{(k-1)/2 - (k+1)/10} \int_{K_2n^{1/5}}^{\infty} e^{-w}dw \right) \\
&= O\left(n^{(2k-3)/5} e^{-K_2n^{1/5}} \right).
\end{align*}
Putting everything together, and repeating the argument for the elements of
$\mC_2$ which are less than zero, we have shown that 
\[ I \sim \int_{\mC_1+i\epsilon} y^{-k}A(y)e^{-n\psi(y)}dy, \]
up to an error that decays faster than any negative integer power of $n$.

\emph{Step 2 (Approximate).} We now show that $A$ and $\psi$ can be replaced 
by their leading power series terms while introducing an error
which will not affect asymptotics. Note that 
\[ y^{-k}A(y)e^{-n\psi(y)} = a_0y^{-k}e^{-nb_0y^2}\left(1+O(y+ny^3)\right), \]
and $|y| \leq 2n^{-2/5}$ when $y\in\mC_1+i\epsilon$. Thus,
\begin{align*}
\left| \int_{\mC_1+i\epsilon}y^{-k}A(y)e^{-n\psi(y)}dy 
- a_0\int_{\mC_1+i\epsilon}y^{-k}e^{-nb_0y^2}dy \right|
&=
O\left(n^{-1/5}\int_{\mC_1+i\epsilon} 
\left|y^{-k}e^{-nb_0y^2}\right|dy\right) \\[+2mm]
&= O\left(n^{-1/5}\int_{\mC_1}\frac{1}{(t^2+\epsilon^2)^{k/2}}e^{-n(t^2-\epsilon^2)}dt\right) \\[+2mm]
&= O\left(n^{k/2-1/5}\int_{\R}\frac{1}{(nt^2+1)^{k/2}}e^{-nt^2}dt\right),
\end{align*}
where we have made the substitution $y=t+i\epsilon$ and 
used $\epsilon=1/\sqrt{n}$. Substituting
$w=nt^2$ then implies 
\[ O\left(n^{k/2-1/5}\int_{\R}\frac{1}{(nt^2+1)^{k/2}}e^{-nt^2}dt\right)
=  O\left(n^{(k-1)/2-1/5}\int_0^{\infty}\frac{1}{\sqrt{w}(w+1)^{k/2}}e^{-w}dw \right)
= O\left(n^{(k-1)/2-1/5}\right),
\]
since
\[ \int_0^{\infty}\frac{1}{\sqrt{w}(w+1)^{k/2}}e^{-w}dw \leq \int_0^{\infty}\frac{1}{\sqrt{w}}e^{-w}dw = \sqrt{\pi} \]
is finite for all $k>0$.

\emph{Step 3 (Add Tails Back).} Following the reasoning of Step 1, 
the integral 
\[ a_0\int_{\mC_2+i\epsilon} y^{-k}e^{-nb_0y^2}dy \]
is asymptotically negligible, meaning
\[ 
I = a_0\int_{\R+i\epsilon} y^{-k}e^{-nb_0y^2}dy + O\left(n^{(k-1)/2-1/5}\right).
\]
Proposition~\ref{prop:1DNegGauss} gives the
value of this integral, showing it grows as a constant
times $n^{(k-1)/2}$.
\end{proof}

\begin{rem}
Our proof of Lemma~\ref{lem:BigONegGauss} mirrors the
standard presentation of the saddle-point method for 
integrals of the form $\int_{\R} A(y)e^{-n\psi(y)}dy$,
where $A$ and $\psi$ are analytic (or, more generally, smooth). 
Our arguments are slightly more involved 
because the negative powers of $y$ introduced in 
our situation necessitates working off the real line.
\end{rem}

\begin{example}
\label{ex:Ex2NonGen}
Returning to Example~\ref{ex:Ex1NonGen}, 
we see that
\begin{align*}
\frac{-3}{2\pi i} \int_{\R+i\epsilon}  \frac{1}{y(1+iy)(1-2iy)} 
e^{-n\left[2\log(1+iy) + \log(1-2iy)\right]}dy
&= \frac{-3}{2\pi i} \int_{\R+i\epsilon}  \frac{e^{-n\left[3y^2 
+ O(y^3)\right]}}{y}\left(1+O(y)\right) dy + O(\tau^n) \\
&\sim\frac{-3}{2\pi i} \left(\int_{\R+i\epsilon}  
\frac{e^{-3ny^2}}{y} dy\right),
\end{align*}
so Proposition~\ref{prop:1DNegGauss} implies
\[ [x^{2n}y^n]F(x,y) \sim \frac{3}{2}. \]
\end{example}

Explicit expressions for 
non-generic directions where $(-\nabla h_{\br})(\bs)$ lies on 
higher codimensional faces of $N(\bs)$ follow from a similar 
approximation procedure, however deriving the necessary
bounds is slightly harder due to the presence of additional
variables. A rigorous
investigation of these cases is in progress.

\subsection{Bridging the Exponential Gaps} \label{ss:bridge}
Now we consider asymptotic transitions around non-generic directions.
Suppose we again have a contributing point $\bs$ for the non-generic direction
$\brhat$ under the assumptions of Section~\ref{ss:exact} and $\bpt = \bone$, 
so we have only simple poles.

To study such transitions, we write 
$\br = n\rhat + \sqrt{n}\theta_1\bv_1 + \cdots + 
\sqrt{n}\theta_{t-s}\bv_{t-s}$ where
\[ \bv_j = \sigma \odot \bb^{(j)} = 
\left(\sigma_1\bb^{(j)}_1,\dots,\sigma_d\bb^{(j)}_d\right) \]
and work from~\eqref{eq:nonGenAfterRes}. Then 
\[ \br \cdot \log\left(\bs-iM^{-1}
\begin{psmallmatrix}\bzer\\\br\end{psmallmatrix}\right) 
= n\phi(\by) + \sqrt{n}\theta_1\psi_1(\by) + \cdots + \sqrt{n}\theta_{t-s}\psi_{t-s}(\by) \]
where
\begin{align*}
\phi(\by) &= \rhat \cdot \log\left(\bs+iM^{-1}
\begin{psmallmatrix}\bzer\\\by\end{psmallmatrix}\right) \\
\psi_j(\by) &= \bv_j \cdot \log\left(\bs+iM^{-1}
\begin{psmallmatrix}\bzer\\\by\end{psmallmatrix}\right).
\end{align*}
Since the derivative of the $j$th coordinate of 
$\log\left(\bs+iM^{-1}\begin{psmallmatrix}\bzer\\\by
\end{psmallmatrix}\right)$ with respect to $y_k$ is $\sigma_j^{-1}iM^{-1}_{j,k+s}$,
\begin{align*} 
(\nabla \phi)(\bzer) &= (-i)\sum_{j=1}^d \frac{r_j}{\sigma_j}M^{-1}_{j,k+s} \\
&= (-i)\left[ \left(\frac{r_1}{\sigma_1},\dots,\frac{r_d}{\sigma_d}\right) 
\cdot M^{-1} \right]_{[s+1,\dots,t]} \\
&= i\left[ \left(a_1\bb^{(1)} + \cdots + a_s\bb^{(s)}\right) \cdot M^{-1} 
\right]_{[s+1,\dots,t]} \\
&= 0
\end{align*}
as the first $t$ rows of $M$ are $\bb^{(1)},\dots,\bb^{(t)}$, 
where the notation $P_{[a,\dots,b]}$ refers to the $a$ through $b$th 
columns of a matrix $P$. Similarly,
\[ (\nabla\psi_j)(\bzer) = (-i)\left[\bb^{(s+j)}\cdot 
M^{-1}\right]_{[s+1,\dots,t]} = (-i)\be^{(s+j)}. \]

In the same codimension~1 case from the last section, the modified 
saddle-point method described in the proof of Proposition~\ref{prop:nonGenDir}
shows that when $\theta=O(n^c)$ for some $c<1/2$ then
\[ I_{\theta} \sim \frac{\bs^{-\br}i\tG(\bs)}{|\det M|(2\pi)}
\underbrace{\int_{\R+i\epsilon} \frac{1}{y} e^{-n(\bq^T\bq/2)y^2  
- i\sqrt{n}\theta y} \; d\by}_{J(\theta)},
\]
where $M$ and $\bq$ are the same as above.
This implies the following result, where 
we recall the Gaussian error function 
$\erf (z) = \frac{2}{\pi} \int_{0}^z e^{-u^2/2} \, du$.
\begin{proposition}
\label{prop:nonGenErf}
Suppose
\[ F(\bz) = \frac{G(\bz)}{\ell_1(\bz) \cdots\ell_d(\bz)} \]
is simple and $\br$ is a non-generic direction with a unique contributing 
singularity $\bs$ of maximal height, which lies on the stratum 
$\ell_1=\cdots=\ell_d=0$. Suppose also that $G(\bs) \neq 0$ and $(-\nabla 
h_{\br})(\bs)$ lies on a codimension 1 face of $N(\bs)$, in such a way that
\[ (-\nabla h_{\br})(\bs) = \lambda_1 \cdot \bb^{(1)} + \cdots + 
\lambda_{d-1} \cdot \bb^{(d-1)} + 0 \cdot \bb^{(d)}\]
for some $\lambda_j>0$.  If $\bv = \left(\sigma_1 b^{(d)}_1, \dots,
\sigma_n b^{(d)}_n \right)$ and $\theta = O(n^c)$ for some $c<1/2$, then
\[ \left[\bz^{n \br + \theta\sqrt{n}\bv}\right]F(\bz) 
\sim 
\bs^{-n \br - \theta\sqrt{n}\bv} \, \frac{G(\bs)}{2\sigma_1\cdots \sigma_d |\det 
M|}
\left(\Psi\left(\frac{\theta}{\sqrt{2\bq^T\bq}}\right) + 1\right), \]
where $M$ is the matrix whose rows are the $\bb^{(j)}$ and $\bq$ is the
right-most column of
\[ 
Q = \begin{pmatrix} 
\sqrt{r_1}/\sigma_1 & \bzer & \bzer & \bzer \\[+1mm]
\bzer & \sqrt{r_2}/\sigma_2 & \bzer & \bzer \\[+1mm]
\bzer & \bzer & \ddots & \bzer \\[+1mm]
\bzer & \bzer & \bzer & \sqrt{r_d}/\sigma_d
\end{pmatrix} M^{-1}.
\]
\end{proposition}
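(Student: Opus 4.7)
The plan is to take as starting point the reduction outlined in the paragraph immediately preceding the statement. Specializing to $s=d-1$, $t=d$, so $t-s=1$, that discussion yields, uniformly for $\theta = O(n^c)$ with $c<1/2$,
\[ [\bz^{n\br+\theta\sqrt{n}\bv}]F(\bz) \sim \bs^{-(n\br+\theta\sqrt{n}\bv)} \cdot \frac{i\,\tG(\bs)}{2\pi |\det M|} \cdot J(\theta), \]
where
\[ J(\theta) := \int_{\R+i\epsilon} \frac{e^{-n(\bq^T\bq/2)y^2 - i\sqrt{n}\theta y}}{y}\, dy. \]
Since $\bs$ lies on the codimension-$d$ stratum $\ell_1=\cdots=\ell_d=0$, no factor $\ell_j(\bs)^{p_j}$ with $j>d$ survives in the definition of $\tG$, so $\tG(\bs) = G(\bs)/(\sigma_1\cdots\sigma_d)$. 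Everything that follows is a matter of explicitly evaluating $J(\theta)$ and collecting constants.

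First I would verify this reduction by adapting the three steps in the proof of Lemma~\ref{lem:BigONegGauss} to the additional imaginary linear term $-i\sqrt{n}\theta y$ in the exponent. The tail-pruning step rests only on the real part of the exponent, which is unaffected by an imaginary perturbation, so that step goes through verbatim. On the central interval $|y|\leq n^{-\alpha}$, the error in replacing the exact integrand by $y^{-1}\exp(-n(\bq^T\bq/2)y^2 - i\sqrt{n}\theta y)$ is controlled by $O(y + ny^3 + \sqrt{n}|\theta|y^2)$ corrections in the exponent; the second two contribute $O(n^{1-3\alpha})$ and $O(n^{1/2+c-2\alpha})$ respectively, both of which are $o(1)$ for some $\alpha\in(1/4+c/2,1/2)$, a nonempty range precisely when $c<1/2$.

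Next I would compute $J(\theta)$ by differentiating under the integral sign. Because differentiation in $\theta$ cancels the singular factor $1/y$, the resulting integrand is entire, and the contour may be shifted to the real axis:
\[ J'(\theta) = -i\sqrt{n}\int_\R e^{-n(\bq^T\bq/2)y^2 - i\sqrt{n}\theta y}\, dy = -i\sqrt{\frac{2\pi}{\bq^T\bq}}\; e^{-\theta^2/(2\bq^T\bq)}, \]
by the classical identity $\int_\R e^{-ay^2 - iby}\,dy = \sqrt{\pi/a}\,e^{-b^2/(4a)}$. Integrating from $0$ to $\theta$, substituting $u = s/\sqrt{2\bq^T\bq}$, and using Proposition~\ref{prop:1DNegGauss} with $r=1$, $a=0$ to evaluate $J(0) = -i\pi$, one obtains
\[ J(\theta) = -i\pi - 2i\sqrt{\pi}\int_0^{\theta/\sqrt{2\bq^T\bq}} e^{-u^2}\,du = -i\pi\bigl(\erf(\theta/\sqrt{2\bq^T\bq}) + 1\bigr). \]
Substituting back into the formula for $[\bz^{n\br+\theta\sqrt{n}\bv}]F(\bz)$ above and cancelling the factors of $i$ produces the claimed asymptotic.

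The main technical obstacle is establishing uniformity of the saddle-point approximation across the full scaling window $\theta = O(n^c)$: the imaginary phase $e^{-i\sqrt{n}\theta y}$ does not spoil any exponential decay estimate, but it forces a careful balance between the cubic remainder in $n\phi(y)$ and the quadratic remainder in $\sqrt{n}\theta\psi_1(y)$, which is exactly what pins down the threshold $c<1/2$. A minor but satisfying consistency check is that at $\theta=0$, since $\erf(0)=0$, the formula produces exactly half of the leading constant one would get at a nearby generic direction, matching the final assertion of Proposition~\ref{prop:nonGenDir}.
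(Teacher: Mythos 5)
Your proposal is correct and follows essentially the same route as the paper: differentiate $J(\theta)$ in $\theta$ to cancel the $1/y$ factor, evaluate the resulting Gaussian integral in closed form, integrate back and use Proposition~\ref{prop:1DNegGauss} to pin down $J(0)=-i\pi$. The paper itself is terse about why the reduction to $J(\theta)$ remains valid with the extra $-i\sqrt{n}\theta y$ term, and you supply that verification explicitly (though your constraint $\alpha>1/4+c/2$ should really be $\alpha>\max(1/3,\,1/4+c/2)$ to kill the $ny^3$ remainder when $c$ is small); you also correctly get the sign $J'(\theta)=-i\sqrt{n}\int\cdots$, which the paper records with the wrong sign, and you use the standard normalization of $\erf$, which is the one consistent with the stated conclusion.
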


\begin{proof}
The integral $J(\theta)$ is actually easier to analyze than the 
integrals from the previous section. In fact, differentiating under 
the integral sign with respect to $\theta$ simplifies to remove the
factor of $y$ in the denominator, giving
\[
J'(\theta) 
= i\sqrt{n} \int_{\R+i\epsilon} e^{-n(\bq^T\bq/2)y^2  
- i\sqrt{n}\theta y} \; d\by 
= i\sqrt{n}\int_{\R} e^{-n(\bq^T\bq/2)y^2  
- i\sqrt{n}\theta y} \; d\by 
= i\sqrt{\frac{2\pi}{\bq^T\bq}}e^{-\frac{\theta^2}{2\bq^T\bq}}.
\]
Integrating with respect to $\theta$ then yields
\[ J(\theta) = -\pi i \Psi\left(\frac{\theta}{\sqrt{2\bq^T\bq}}\right) + J(0), \]
where
\[ J(0) = \int_{\R+i\epsilon} \frac{e^{-n(\bq^T\bq/2)y^2}}{y} \; d\by = -\pi i \]
by Proposition~\ref{prop:1DNegGauss}.
\end{proof}

The general codimension case reduces to an analysis of integrals of the form 
\[ J(\bt) = \int_{\R^b+i\epsilon\bone} \frac{1}{y_1\cdots y_a} 
e^{-n(\by^T\mH\by)+i\sqrt{n}(\theta_1y_1+\cdots+\theta_ay_a)}d\by, \]
where $\mH$ is a positive definite matrix. A suitable change of variables 
splits this integral into a product of integrals, allowing one to reduce to
the case $a=b$ where every variable appears in the denominator.
This results in a so-called \emph{hydrodynamic} scaling limit, 
which will be rigorously covered in future work.

\bibliographystyle{plain}
\bibliography{bibl}

\begin{thebibliography}{10}

\bibitem{ABG}
M.~F. Atiyah, R.~Bott, and L.~G{\.a}rding.
\newblock Lacunas for hyperbolic differential operators with constant
  coefficients. {I}.
\newblock {\em Acta Math.}, 124:109--189, 1970.

\bibitem{BMP-lacuna}
Y.~Baryshnikov, S.~Melczer, and R.~Pemantle.
\newblock Asymptotics of multivariate sequences in the presence of a lacuna.
\newblock {\em Submitted, arxiv.org/abs/1905.04187}, 2021.

\bibitem{BaryshnikovMelczerPemantle2021}
Y.~Baryshnikov, S.~Melczer, and R.~Pemantle.
\newblock Stationary points at infinity for analytic combinatorics.
\newblock {\em Foundations of Computational Mathematics}, 2021.

\bibitem{BMP-morse}
Y.~Baryshnikov, S.~Melczer, and R.~Pemantle.
\newblock Stationary points at infinity for analytic combinatorics.
\newblock {\em Foundations Comp. Math.}, 2021.

\bibitem{BP-cones}
Y.~Baryshnikov and R.~Pemantle.
\newblock Asymptotics of multivariate sequences, part iii: quadratic points.
\newblock {\em Adv. Math.}, 228:3127--3206, 2011.

\bibitem{BenderRichmond1983}
E.~A. Bender and L.~B. Richmond.
\newblock Central and local limit theorems applied to asymptotic enumeration.
  ii. multivariate generating functions.
\newblock {\em J. Combin. Theory Ser. A}, 34:255--265, 1983.

\bibitem{BenderRichmond1999}
Edward~A. Bender and L.~Bruce Richmond.
\newblock Multivariate asymptotics for products of large powers with
  applications to {L}agrange inversion.
\newblock {\em Electron. J. Combin.}, 6:Research Paper 8, 21 pp. (electronic),
  1999.

\bibitem{BertozziMcKenna1993}
Andrea Bertozzi and James McKenna.
\newblock Multidimensional residues, generating functions, and their
  application to queueing networks.
\newblock {\em SIAM Rev.}, 35(2):239--268, 1993.

\bibitem{brylawski-NBC}
T.~Brylawski.
\newblock The broken-circuit complex.
\newblock {\em Trans. AMS}, 234(2):417--433, 1977.

\bibitem{Christol1984}
Gilles Christol.
\newblock Diagonales de fractions rationnelles et equations
  diff{\'e}rentielles.
\newblock In {\em Study group on ultrametric analysis, 10th year: 1982/83,
  {N}o. 2}, pages Exp. No. 18, 10. Inst. Henri Poincar{\'e}, Paris, 1984.

\bibitem{deloera-sturmfels}
J.~de~Loera and B.~Sturmfels.
\newblock Algebraic unimodular counting.
\newblock {\em Math. Program.}, 96:183--203, 2003.

\bibitem{DeVriesHoevenPemantle2011}
T.~DeVries, J.~van~der Hoeven, and R.~Pemantle.
\newblock Automatic asymptotics for coefficients of smooth, bivariate rational
  functions.
\newblock {\em Online J. Anal. Comb.}, 6:24 pages, 2011.

\bibitem{FlajoletSedgewick2009}
P.~Flajolet and R.~Sedgewick.
\newblock {\em Analytic Combinatorics}.
\newblock Cambridge University Press, 2009.

\bibitem{GaoRichmond1992}
Zhicheng Gao and L.~Bruce Richmond.
\newblock Central and local limit theorems applied to asymptotic enumeration.
  {IV}. {M}ultivariate generating functions.
\newblock {\em J. Comput. Appl. Math.}, 41(1-2):177--186, 1992.
\newblock Asymptotic methods in analysis and combinatorics.

\bibitem{karloff}
A.~Gilbert and H.~Karloff.
\newblock On the fractal behavior of tcp.
\newblock In {\em Proceedings of the 35th Annual ACM Symposium on Theory of
  Computing}, pages 297--306, New York, 2003. ACM Press.

\bibitem{GM}
Mark Goresky and Robert MacPherson.
\newblock {\em Stratified {M}orse theory}, volume~14 of {\em Ergebnisse der
  Mathematik und ihrer Grenzgebiete (3) [Results in Mathematics and Related
  Areas (3)]}.
\newblock Springer-Verlag, Berlin, 1988.

\bibitem{Henrici1977}
Peter Henrici.
\newblock {\em Applied and computational complex analysis. {V}ol. 2}.
\newblock Wiley Interscience [John Wiley \& Sons], New York-London-Sydney,
  1977.
\newblock Special functions---integral transforms---asymptotics---continued
  fractions.

\bibitem{Hormander1990a}
Lars H{{\"o}}rmander.
\newblock {\em The analysis of linear partial differential operators. {I}},
  volume 256 of {\em Grundlehren der Mathematischen Wissenschaften [Fundamental
  Principles of Mathematical Sciences]}.
\newblock Springer-Verlag, Berlin, second edition, 1990.
\newblock Distribution theory and Fourier analysis.

\bibitem{Hwang1998}
Hsien-Kuei Hwang.
\newblock Large deviations of combinatorial distributions. {II}. {L}ocal limit
  theorems.
\newblock {\em Ann. Appl. Probab.}, 8(1):163--181, 1998.

\bibitem{Lairez2016}
Pierre Lairez.
\newblock Computing periods of rational integrals.
\newblock {\em Math. Comp.}, 85(300):1719--1752, 2016.

\bibitem{LandoZvonkin2004}
Sergei~K. Lando and Alexander~K. Zvonkin.
\newblock {\em Graphs on surfaces and their applications}, volume 141 of {\em
  Encyclopaedia of Mathematical Sciences}.
\newblock Springer-Verlag, Berlin, 2004.
\newblock With an appendix by Don B. Zagier, Low-Dimensional Topology, II.

\bibitem{Lipshitz1988}
L.~Lipshitz.
\newblock The diagonal of a {$D$}-finite power series is {$D$}-finite.
\newblock {\em J. Algebra}, 113(2):373--378, 1988.

\bibitem{lladser2006}
M.~Lladser.
\newblock Uniform formulae for coefficients of meromorphic functions in two
  variables.
\newblock {\em SIAM J. Dis. Math.}, 20(4):811--828, 2006.

\bibitem{melczer-book}
Stephen Melczer.
\newblock {\em An Invitation to Analytic Combinatorics in One and Several
  Variables}.
\newblock Springer, New York, 2021.

\bibitem{MelczerSalvy2021}
Stephen Melczer and Bruno Salvy.
\newblock Effective coefficient asymptotics of multivariate rational functions
  via semi-numerical algorithms for polynomial systems.
\newblock {\em J. Symbolic Comput.}, 103:234--279, 2021.

\bibitem{Mezzarobba2010}
Marc Mezzarobba.
\newblock {NumGfun}: a package for numerical and analytic computation with
  {D}-finite functions.
\newblock pages 139--146, 2010.

\bibitem{Mezzarobba2016}
Marc Mezzarobba.
\newblock Rigorous multiple-precision evaluation of {D}-finite functions in
  {S}age{M}ath.
\newblock Technical Report 1607.01967, arXiv, 2016.
\newblock Extended abstract of a talk at the 5th International Congress on
  Mathematical Software.

\bibitem{Odlyzko1995}
A.~M. Odlyzko.
\newblock Asymptotic enumeration methods.
\newblock In {\em Handbook of combinatorics, {V}ol.\ 1,\ 2}, pages 1063--1229.
  Elsevier, Amsterdam, 1995.

\bibitem{orlik-solomon1980}
P.~Orlik and L.~Solomon.
\newblock Combinatorics and topology of complements of hyperplanes.
\newblock {\em Invent. Math.}, 56:167--189, 1980.

\bibitem{OrlikTerao1994}
P.~Orlik and H.~Terao.
\newblock Commutative algebras for arrangements.
\newblock {\em Nagoya Math J.}, 134:65--73, 1994.

\bibitem{OrlikTerao1992}
Peter Orlik and Hiroaki Terao.
\newblock {\em Arrangements of hyperplanes}, volume 300 of {\em Grundlehren der
  Mathematischen Wissenschaften [Fundamental Principles of Mathematical
  Sciences]}.
\newblock Springer-Verlag, Berlin, 1992.

\bibitem{PW1}
R.~Pemantle and M.~C. Wilson.
\newblock Asymptotics of multivariate sequences: {I. S}mooth points of the
  singular variety.
\newblock {\em J. Comb. Theory, Ser. A}, 97(1):129--161, 2002.

\bibitem{PW2}
R.~Pemantle and M.~C. Wilson.
\newblock Asymptotics of multivariate sequences. {II}. {M}ultiple points of the
  singular variety.
\newblock {\em Combin. Probab. Comput.}, 13(4-5):735--761, 2004.

\bibitem{PW-book}
R.~Pemantle and M.~C. Wilson.
\newblock {\em Analytic Combinatorics in Several Variables}.
\newblock Cambridge University Press, 2013.

\bibitem{Pemantle2000}
Robin Pemantle.
\newblock Generating functions with high-order poles are nearly polynomial.
\newblock In {\em Mathematics and computer science ({V}ersailles, 2000)},
  Trends Math., pages 305--321. Birkh{\"a}user, Basel, 2000.

\bibitem{RaichevWilson2011}
A.~Raichev and M.C. Wilson.
\newblock Asymptotics of coefficients of multivariate generating functions:
  improvements for multiple points.
\newblock {\em Online Journal of Analytic Combinatorics}, 6(0), 2011.

\bibitem{Stein1993}
Elias~M. Stein.
\newblock {\em Harmonic analysis: real-variable methods, orthogonality, and
  oscillatory integrals}, volume~43 of {\em Princeton Mathematical Series}.
\newblock Princeton University Press, Princeton, NJ, 1993.

\bibitem{Hoeven2001}
Joris van~der Hoeven.
\newblock Fast evaluation of holonomic functions near and in regular
  singularities.
\newblock {\em J. Symbolic Comput.}, 31(6):717--743, 2001.

\bibitem{VaGe1987}
A.~Varchenko and I.M. Gelfand.
\newblock Combinatorics and topology of configuration of affine hyperplanes in
  real space.
\newblock {\em Funk. Analiz i ego Prilozh.}, 21:11--22, 1987.

\end{thebibliography}

\end{document}